\numberwithin{equation}{section}       % Number formulas within sections
\theoremstyle{plain}
\newtheorem{theorem}{Theorem}[section]
\newtheorem{prop}{Proposition}[section]
\newtheorem{coro}[prop]{Corollary}
\newtheorem{lemma}[prop]{Lemma}
\theoremstyle{definition}
\newtheorem{definition}[prop]{Definition}
\theoremstyle{remark}
\newtheorem{remark}[prop]{Remark}
\newtheoremstyle{citing}% name
  {3pt}%      Space above, empty = `usual value'
  {3pt}%      Space below
  {\itshape}% Body font
  {}%         Indent amount (empty = no indent, \parindent = para indent)
  {\bfseries}% Thm head font
  {.}%        Punctuation after thm head
  {.5em}%     Space after thm head: " " = normal interword space;
\theoremstyle{citing}
\DeclareMathAlphabet{\mathpzc}{OT1}{pzc}{m}{it} % Zapf Chancery math alphabet
\newcommand{\C}{\mathbb{C}}
\newcommand{\N}{\mathbb{N}}
\newcommand{\R}{\mathbb{R}}
\newcommand{\Z}{\mathbb{Z}}
\newcommand{\teta}{\widetilde{\teta}}
\newcommand{\dist}{d}
\DeclareMathOperator{\diam}{diam}
\DeclareMathOperator{\supp}{supp} % Support of a measure
\begin{document}

\title[]{On invariant measures of "satellite" infinitely renormalizable quadratic polynomials}

\author{Genadi Levin}

\address{Institute of Mathematics, The Hebrew University of Jerusalem, Givat Ram,
Jerusalem, 91904, Israel}

\email{levin@math.huji.ac.il}

\author{Feliks Przytycki}

\address{Institute of Mathematics, Polish Academy of Sciences, \'Sniadeckich St., 8, 00-956 Warsaw,
Poland}

\email{feliksp@impan.pl}

\thanks{The first author partially supported by ISF grant 1226/17, Israel.
The second author partially supported by National Science Centre, Poland, Grant OPUS 21 "Holomorphic dynamics, fractals, thermodynamic formalism”, 2021/41/B/ST1/00461.}

%\author{Weixiao Shen}

%\address{Department of Mathematics, National University of Sinapore,
%10 Lower Kent Ridge Road, Singapore 119076}
%\address{Shanghai Center for Mathematical Sciences, Fudan University, 220 Handan Road, Shanghai, China 200433}

%\email{wxshen@fudan.edu.cn}

\date{\today}

%\author{Genadi Levin, Feliks Przytycki and Weixiao Shen}

%\title[]{Upper Lyapunov exponent}
%\author{}

%\date{\today}

\maketitle

\begin{abstract}
Let $f(z)=z^2+c$ be an infinitely renormalizable quadratic polynomial and $J_\infty$ be the intersection of forward orbits of "small" Julia sets
of its simple renormalizations. We prove that if $f$ admits an infinite sequence of satellite renormalizations, then every invariant measure of $f: J_\infty\to J_\infty$
is supported on the postcritical set and has zero Lyapunov exponent.
%$J_\infty$ carries no $f$-invariant measures with the positive Lyapunov exponent.
Coupled with \cite{LPS}, this implies that the Lyapunov exponent of such $f$ at $c$ is equal to zero, which answers partly a question posed by Weixiao Shen.
\end{abstract}

\section{Introduction}

We consider the dynamics $f: \C\to \C$ of a quadratic polynomial. Up to a linear change of coordinates, $f$ has the form $f_c(z)=z^2+c$ for some $c\in\C$.
%Let $f: \C\to \C$ be a dynamical system on the complex plane generated by a quadratic polynomial $f$, i.e., up to an affine change of coordinates, $f$ is of the form $f_c(z)=z^2+c$ for some $c\in\C$.
In this paper, which is the sequel of \cite{LPI},
we assume that $f$ is infinitely-renormalizable. Moreover, in the main results we assume that $f$ has infinitely many "satellite renormalizations", see e.g. \cite{mcm}, or below for definitions.
Dynamics, geometry and topology of such system can be very non-trivial,
in particular, due to the fact that different renormalization levels are largely independent.

Historically, the first example of infinitely-renormalizsable one-dimensional map was, probably, the Feigenbaum period-doubling quadratic polynomial $f_{c_F}$, where $c_F=-1.4...$ \cite{feig}.
%Note that all the renormalizations of $f_{c_F}$ are satellite.
The Julia set of $f_{c_F}$ is locally connected \cite{HuJi} as it follows from so-called "complex bounds",
a compactness property of renormalizations. This is a key tool since \cite{Su}, in particular, in proving the Feigenbaum-Coullet-Tresser universality conjecture \cite{Su, mcm1, lu}.
Perhaps, more striking for us
are Douady-Hubbard's examples, or alike, of infinitely-renormalizable quadratic polynomials with non-locally connected Julia sets \cite{Mi0, So, L, L1, L1add, CS, CP}. As for the Feigenbaum polynomial $f_{c_F}$,
all the renormalizations of such maps are satellite, although, contrary to $f_{c_F}$, combinatorics is unbounded (which, in turn, implies that those maps cannot have complex bounds \cite{BLOT}).

%On a more conceptual level, 'Density of Hyperbolicity Conjecture (DHC)' (called also the Fatou conjecture) asserts that any rational map (polynomial) can be approximated by hyperbolic rational maps (polynomials) of the same degree. In polynomials of degree $2$, there is a stronger conjecture called MLC(='Mandelbrot set is Locally Connected').
%After a breakthrough work of Yoccoz \cite{H} on the MLC, the only obstacle for proving DHC for quadratic polynomials are infinitely-renormalizable ones.

Dynamics of every holomorphic endomorphism of the Riemann sphere $g:\hat\C\to\hat\C$ classically splits $\hat\C$ into two subsets: the Fatou set $F(g)$ and its complement the Julia set $J(g)$, where
$F(g)$ is the maximal (possibly, empty) open set where the sequence of iterates $g^n$, $n=0,1,...$ forms a normal (i.e., a precompact) family.
%Then $J(g)$ is the closure of all repelling cycles of the system.
See e.g. \cite{CG}, \cite{Mib} for the Fatou-Julia theory and \cite{Mary} for a recent survey.

%Given a rational map $g:\hat\C\to\hat\C$ of the Riemann sphere of degree $d\ge 2$, the Fatou set $F(g)$ of $g$ is the maximal open set where the sequence of iterates $g^n$, $n=0,1,...$ form a normal
%(i.e., a precompact) family. Its complement $J(g)$ is the Julia set of $g$, see e.g. \cite{CG}, \cite{Mib} for the background and \cite{Mary} for a recent survey.

If $g$ is a polynomial, then the Julia set $J(g)$ coincides with the boundary of the basin of infinity $A(\infty)=\{z\in\C| \lim_{n\to\infty}g^n(z)=\infty\}$ of $g$.
The complement $\C\setminus A(g)$ is called the filled Julia set $K(g)$ of the polynomial $g$. The compact $K(g)\subset \C$ is connected if and only if it contains all critical points
of $g$ in the complex plane.

A quadratic polynomial $f_c$ with connected filled Julia set $K(f)$ is {\it renormalizable} if, for some topological disks $U\Subset V$  around the critical point $0$ of $f_c$,
and some $p\ge 2$ (period of the renormalization), the restriction $F:=f_c^p:U\to V$ is a proper branched covering map (called polynomial-like map) of degree $2$
%polynomial-like (PL) mapping of degree $2$
and the non-escaping set $K(F)=\{z\in U: F^{n}(z)\in U \mbox{ for all } n\ge 1\}$ (called the filled Julia set of the polynomial-like map $F$) is connected.
%\{z| F^n(z)\in U, n=0,1,...\}$.
%Here, PL map is a proper covering map of degree bigger than $1$.
The map $F:U\to V$ is then a {\it renormalization} of $f_c$
and the set $K(F)$ is a {\it "small" (filled) Julia set} of $f_c$.
By the theory of polynomial-like mappings \cite {DH}, there is a quasiconformal homeomorphism of $\C$, which is conformal on $K(F)$, that
conjugates $F$ on a neighborhood of $K(F)$ to a uniquely defined another quadratic polynomial $f_{c'}$ with connected filled Julia set.
%is hybrid equivalent, in particulat, conjugate to a unique defined another quadratic polynomial $f_{c'}$ with connected filled Julia set.
%(see \cite{DH} for exact definitions and the theory of polynomial-like mappings).
%By \cite{DH}, this is equivalent to the condition that $g:=f^p: U\to V$ is a covering map with a single critical point at $0$ and
%the Julia set $J_g=\{z\in U: f_c^{pn}(z)\in U \mbox{ for all } n\ge 1\}$ of $g$ is connected.
%The map $f_c^p:U\to V$ is then a {\it renormalization} of $f_c$
%and the set $K(F)=\{z\in U: f_c^{pn}(z)\in U \mbox{ for all } n\ge 1\}$ is a {\it "small" (filled) Julia set} of $f_c$.
If $f_{c'}$ is renormalizable by itself, then $f_c$ is called twice renormalizable, etc.
If $f_c$ admits infinitely many renormalizations, it is called {\it infinitely-renormalizable}.
The renormalization $F=f^p_c$ is {\it simple} if any two sets $f^i(K(f))$, $f^j(K(F))$, $0\le i<j\le p-1$,
are either disjoint or intersect each other at a unique point which does not separate either of them.
A simple renormalization $f^{p_n}$ is called {\it primitive} if all sets $f^i(K_n)$, $i=0,\cdots, p_n-1$, are disjoints and {\it satellite} otherwise.

%In spite of many efforts and great developments, not much is known about the topology and dynamics
%of an arbitrary infinitely-renormalizable quadratic polynomial.

To state our main results, Theorems \ref{thm:meas+}, let $f(z)=z^2+c$ be infinitely renormalizable. Then its Julia set $J=J(f)$ coincides with the filled Julia set $K(f)$ and is a nowhere dense compact full connected subset of $\C$.
Let $1=p_0<p_1<...<p_n<...$ be the sequence
of consecutive periods of simple renormalizations of $f$ and $J_n\ni 0$ denote the "small" Julia set of the
$n$-renormalization (where $J_0=J$). Then $p_{n+1}/p_n$ is an integer, $f^{p_n}(J_n)=J_n$, for any $n$, and
%$\{J_n\}_{n=1}^\infty$ is a strictly decreasing sequence of continua without interior, all containing $0$.
%which contains $0$
%Let
$f$-orbits of $J_n$,
$$orb(J_n)=\cup_{j\ge 0}f^j(J_n)=\cup_{j=0}^{p_n-1}f^j(J_n),$$
$n=0,1,...$, form a strictly decreasing sequence of compact subsets of $\C$.
%be the $f$-orbit of $J_n$ and
Let
$$J_\infty=\cap_{n\ge 0}orb(J_n)$$
be the intersection of the orbits of the "small" Julia sets $J_n$.
%(note that $\{orb(J_n)\}_{n=1}^\infty$ is a decreasing sequence of compact subsets of $\C$).
For every $n$, repelling periodic orbits of $f$ are dense in $orb(J_n)$,
while each component of $J_\infty$ is wandering.
%,hence
In particular, $J_\infty$ contains no periodic points of $f$.

%$$J_\infty=\cap_{n\ge 0}\cup_{j=0}^{p_n-1}f^j(J_n)$$
%be the intersection of orbits of the "small" Julia sets.
%$J_\infty$ is a compact $f$-invariant set
%which contains the omega-limit set $\omega(0)$ of $0$.
%and each component of $J_\infty$ is wandering, in particular, $J_\infty$ contains no periodic orbits of $f$.
%Recall \cite{mcm} that a renormalization $f^{p_n}$, $n\ge 1$, is {\it simple} if any two sets $f^i(J_n)$, $f^j(J_n)$, $0\le i<j\le p_n-1$,
%are either disjoint or intersect each other at a unique point which does not separate either of them.
%A simple renormalization $f^{p_n}$ is called {\it primitive} if all sets $f^i(J_n)$, $i=0,\cdots, p_n-1$, are disjoints and {\it satellite} otherwise.
%Let
%$$orb(J_n)=\cup_{j\ge 0}f^j(J_n)=\cup_{j=0}^{p_n-1}f^j(J_n).$$
%Note that, for every $n$, repelling periodic orbits of $f$ are dense in $orb(J_n)$
%while each component of $J_\infty$ is wandering,
%in particular, $J_\infty$ contains no periodic points of $f$.
Let
$$P=\overline{\{f^n(0)| n=1,2,...\}}$$
be the postcritical set of $f$. Clearly,
$$P\subset J_\infty.$$
Moreover, the critical point $0$ is recurrent, hence,
$$P=\omega(0),$$
where $\omega(z)$ is the omega-limit set of a point $z\in J$.

We prove in \cite{LPI} that $J_\infty$ cannot contain any hyperbolic set.
%As a consequence, the postcritical set $P$ must intersect the omega-limit set $\omega(x)$ of each $x\in J_\infty$.
On the other hand, a hyperbolic set of a rational map always carries an invariant measure with a positive Lyapunov exponent.
So a generalization of \cite{LPI} would be that $J_\infty$ never carries such a measure.
%The affirmative answer to the following conjecture thus
%would be a far reaching generalization of Theorem \ref{thm:exp}:
%{\it For an infinitely renormalizable $f(z)=z^2+c$, the set $J_\infty$ carries no an invariant measure with a positive Lyapunov exponent.}
Here we prove this generalization for a class of "satellite" infinitely-renormalizable quadratic polynomials:
%$f$, that is, such that no renormalization of $f$ is conjugate to
%a quadratic polynomial having only primitive renormalizations:

\begin{theorem}\label{thm:meas+}
Suppose that $f(z)=z^2+c$ admits infinitely many satellite renormalizations.
Then $f: J_\infty\to J_\infty$ has no invariant probability measure with positive Lyapunov exponent.
\end{theorem}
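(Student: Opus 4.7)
The plan is proof by contradiction. Suppose there is an $f$-invariant probability measure $\mu$ on $J_\infty$ with $\chi(\mu)>0$; by passing to an ergodic component we may assume $\mu$ is ergodic.

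The first observation I would make is that $h_{\operatorname{top}}(f,J_\infty)=0$, so by the variational principle $h_\mu(f)=0$ as well. Indeed, $f^{p_n}\colon J_n\to J_n$ is polynomial-like of degree $2$, hence $h_{\operatorname{top}}(f^{p_n},J_n)=\log 2$; since $orb(J_n)$ decomposes as the union of $p_n$ cyclically permuted pieces on each of which $f^{p_n}$ is conjugate to $F_n:=f^{p_n}|_{J_n}$, one has $h_{\operatorname{top}}(f,orb(J_n))=(\log 2)/p_n\to 0$, and $J_\infty\subseteq orb(J_n)$ for every $n$. Thus $\mu$ is an ergodic measure with zero measure-theoretic entropy yet positive Lyapunov exponent.

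Next I would invoke a Katok-type closing/approximation lemma for rational maps (developed via Pesin theory in the presence of critical points, e.g.\ in the work of Przytycki and Rivera-Letelier): there exist repelling periodic orbits $\gamma_k$ of $f$ of periods $n_k\to\infty$ whose empirical measures converge weakly to $\mu$ and satisfy $(1/n_k)\log|(f^{n_k})'(\gamma_k)|\to\chi(\mu)>0$. Since $J_\infty$ contains no periodic points, each $\gamma_k$ lies outside $J_\infty$, but $\gamma_k\to\supp(\mu)\subseteq J_\infty$ in Hausdorff distance.

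The principal obstacle---and the step where the satellite hypothesis is essential---is then to exclude the existence of such a sequence. The idea is to exploit the nested renormalization structure. For each fixed $n$, eventually $\gamma_k$ lies in an arbitrarily thin neighborhood of $orb(J_n)$; because $f$ cyclically permutes the $p_n$ components of $orb(J_n)$ and ergodicity of $\mu$ forces $\supp(\mu)$ to meet every component, the period $n_k$ must be a multiple of $p_n$ for large $k$. Thus $\gamma_k$ is naturally viewed as an $F_n$-periodic orbit close to $J_n$, and the geometric average $(1/n_k)\log|(f^{n_k})'(\gamma_k)|$ decomposes into contributions from each renormalization level, each controlled by univalent distortion bounds for the corresponding renormalization. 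In the satellite regime---where consecutive renormalizations meet at the common $\alpha$-periodic cycle of $f$ and the small Julia sets at successive levels have bounded geometry inherited from the rotation-number combinatorics of the satellite attachments---an effective distortion estimate forces the contribution of deep levels to vanish uniformly in $k$. The technical heart of the argument is this quantitative control across infinitely many satellite levels in the absence of \textit{a priori} complex bounds (which in general fail under unbounded satellite combinatorics); once secured, it yields $(1/n_k)\log|(f^{n_k})'(\gamma_k)|\to 0$, contradicting $\chi(\mu)>0$ and proving the theorem.
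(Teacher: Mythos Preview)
Your opening steps are fine: the entropy computation $h_{\operatorname{top}}(f,J_\infty)=0$ is correct, and Katok-type closing does produce repelling periodic orbits $\gamma_k$ whose exponents approximate $\chi(\mu)$. The gap is that Step~4 is not an argument but a restatement of the difficulty. You assert that in the satellite regime ``the small Julia sets at successive levels have bounded geometry inherited from the rotation-number combinatorics,'' and from this extract a distortion estimate forcing $(1/n_k)\log|(f^{n_k})'(\gamma_k)|\to 0$. The geometric premise is false in general: for unbounded satellite combinatorics complex bounds fail (this is exactly the regime the theorem must cover, cf.\ the discussion around \cite{BLOT} and the Douady--Hubbard examples), so no \emph{a priori} control of the type you invoke is available. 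You flag this yourself (``in the absence of \emph{a priori} complex bounds'') but then supply no substitute mechanism; everything specific to the satellite hypothesis is deferred to this missing step.

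There is also a concrete obstruction to the strategy as written. Inside the same contradiction hypothesis, the paper's Lemma~\ref{fixed}(c) shows that the $\beta$-fixed points $\beta_n$ of the successive satellite levels satisfy $\chi(\beta_n)\to\chi(\mu)>0$. These are periodic points lying in $f(J_n)\subset orb(J_m)$ for every $m<n$, hence accumulating on $J_\infty$; so periodic orbits arbitrarily close to $J_\infty$ with exponents bounded away from zero \emph{do} exist once $\mu$ is assumed. Your hoped-for conclusion therefore cannot follow from position alone and would need some extra feature of the specific shadowing orbits---none is proposed. The paper's proof avoids this entirely: it uses Przytycki's accessibility theorem to attach external rays with controlled geometry to Pesin-typical points of $\mu$, then assembles from arcs of those rays and equipotentials a Jordan curve enclosing some small Julia set $J_{n,x}$ inside a Pesin ball $B(x,\rho)$ on which a univalent branch of $f^{-p_n}$ is defined---impossible, since $f^{p_n}:J_{n,x}\to J_{n,x}$ has a critical point. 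The satellite hypothesis enters only through the combinatorics of the external-angle ``windows'' (Lemmas~\ref{notdoubl}, \ref{p1}, \ref{p3}, \ref{doublem}), never through geometric bounds on the renormalizations.
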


\begin{remark}\label{r:gen}
Conjecturally, the same conclusion should hold for any infinitely-renormalizable $f(z)=z^2+c$.
One can show this assuming that the Julia set of $f$ is locally-connected
(e.g., this is the case if $f$ admits complex bounds). Indeed, if $f:J_\infty\to J_\infty$ had
an invariant probability measure with positive Lyapunov exponent, then, taking a typical point of this measure and repeating the proof of \cite{LPS}, Corollary 5.5, we would conclude that the Julia set of $f$ is not locally-connected (in fact, $J_\infty$ contains a non-trivial continuum). Thus the only open case remains when
$f$ has only finitely many satellite renormalizations and $J_\infty$ contains a non-trivial continuum.
%(e.g., this is the case if $f$ admits complex bounds), this conjecture does hold, even in a much more general situation.
%Namely, let $g$ be a polynomial which admits infinitely many simple renormalizations around a critical value $v$ of $g$.
%Then the set $J_\infty(v)$ corresponding to the sequence of infinite renormalizations around $v$ can be similar defined.
%Now, if $g: J_\infty(v)\to J_\infty(v)$ has an invariant probability measure with positive Lyapunov exponent, then, taking a typical point of this measure and repeating the proof of \cite{LPS}, Corollary 5.5, we conclude that the Julia set of $g$ would be not locally-connected.
%It is proved in \cite{LPS}, Corollary 5.5, that if the upper Lyapunov exponent $\hi_+(g, v)$ of $g$ at $v$ is positive, then the Julia set
%of $g$ is not locally-connected.
\end{remark}
\begin{remark} For every rational map $f:\C\to\C$ (in particular, quadratic polynomial) and every invariant probability measure supported on Julia set Lyapunov exponents are non-negative, see \cite{Pr0} (compare a remark preceding Corollary \ref{coro:J}). On the other hand, if $f$ is hyperbolic or  non-uniformly hyperbolic  (topologically Collet-Eckmann) Lyapunov exponents for all invariant probability measures supported on Julia set are positive and bounded away from $0$, see \cite{P-ICM}.
\end{remark}
Let us comment on the behavior of the restriction map $f: J_\infty\to J_\infty$ where $f$ as in Theorem \ref{thm:meas+}. First, by \cite{LPI}, the postcritical set $P$ must intersect the omega-limit set $\omega(x)$ of each $x\in J_\infty$. At the same time, dynamics and topology of the further restriction $f: P\to P$ can vary. Indeed, there are
infinitely renormalizable quadratic polynomials $f$ with all renormalizations being of satellite type such that at least one of the following holds\footnote{A more complete description of $f:P\to P$ should follow from the methods developed in \cite{CP}.}:
%\begin{itemize}
%\item

(1) $f:P\to P$ is not minimal. This case happens in Douady-Hubbard's type examples. Indeed, by the basic construction \cite{Mi0}, $J_\infty$ then contains a closed invariant set $X$ (which is the limit set for the collection of $\alpha$-fixed points of renormalizations)
such that $0\notin X$. By \cite{LPI}, $X\cap P$ is non-empty. Thus $X\cap P$ is an invariant non-empty proper compact subset of $P$.
%\item

(2) $P$ is a so-called "hairy" Cantor set, in particular, $P$ contains uncountably many non-trivial continua. This case takes place following \cite{CP}.
%\item

(3) $P$ is a Cantor set and $f:P\to P$ is minimal; this happens whenever $f$ either admits complex bounds
%(a compactness property of renormalizations intensively studies since \cite{Su})
(which then imply $J_\infty=P$)
or
%the set $P$ has a bounded geometry in its representation through the renormalizations
is robust
%(which means, roughly, a bounded geometry of the postcritical set
\cite{mcm}\footnote{The "robustness" can happen without "complex bounds" as it follows from \cite{CP} combined with \cite{BLOT}.}.
Under either of the two conditions,
$f: P\to P$ is a minimal homeomorphism, which is topologically conjugate to $x\mapsto x+1$
acting on the projective limit of the sequence of groups $\{\Z/p_n\Z\}_{n=1}^\infty$; in particular, $f:P\to P$ (hence, also $f:J_\infty\to J_\infty$, as it follows from the next Corollary \ref{coro:J}) is uniquely ergodic in this case.
%\end{itemize}

%Let
%$$P=\overline{\{f^n(0)| n=1,2,...\}}$$ be the postcritical set of $f$. Clearly,
%$$P\subset J_\infty.$$
%Recall that $\omega(0)$, the omega-limit set of $0$, is the limit set of the postcritical set $P:=\{f^n(0)\}_{n\ge 1}$ of $f$.
%In the considered case of an infinitely renormalizable quadratic polynomial, $\omega(0)$ is the closure of the critical orbit and a subset of $J_\infty$.

\

Theorem \ref{thm:meas+} yields the following dichotomy about the measurable dynamics of $f: J \to J$ on the Julia set $J$ of $f$.
Recall that, by \cite{Pr0}, any invariant probability measure on the Julia set of a rational function has non-negative exponents.
\begin{coro}\label{coro:J}
Let $\mu$ be an invariant probability ergodic measure of $f: J\to J$. Then either
\begin{enumerate}
\item [(i)] $\supp(\mu)\cap J_\infty=\emptyset$ and its Lyapunov exponent $\chi(\mu)>0$,

or
\item [(ii)] $\supp(\mu)\subset P$ and $\chi(\mu)=0$.
\end{enumerate}
\end{coro}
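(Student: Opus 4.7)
The plan is to reduce the corollary to Theorem~\ref{thm:meas+} via an ergodic dichotomy on $\mu(J_\infty)$, then combine Ma\~n\'e's hyperbolicity theorem with the non-hyperbolicity of $J_\infty$ proved in \cite{LPI}. For every $n$, the set $orb(J_n)$ is closed, and $f^{p_n}(J_n)=J_n$ gives $f(orb(J_n))=orb(J_n)$, so ergodicity forces $\mu(orb(J_n))\in\{0,1\}$. Since this sequence is non-increasing with limit $\mu(J_\infty)$, either $\mu(J_\infty)=1$ (if $\mu(orb(J_n))=1$ for every $n$) or $\mu(J_\infty)=0$ (if $\mu(orb(J_{n_0}))=0$ for some $n_0$).

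In the first case, $J_\infty$ is a closed set of full $\mu$-measure, so $\supp(\mu)\subset J_\infty$ and $\mu$ is an invariant probability for $f: J_\infty\to J_\infty$. Theorem~\ref{thm:meas+} in its strong form (invariant measures on $J_\infty$ are supported on $P$ and have non-positive exponent), combined with the general non-negativity $\chi(\mu)\ge 0$ on Julia sets from \cite{Pr0}, yields $\supp(\mu)\subset P$ and $\chi(\mu)=0$: alternative~(ii). In the second case, invariance of $\mu$ promotes $\mu(orb(J_{n_0}))=0$ to $\mu\bigl(\bigcup_{k\ge 0}f^{-k}(orb(J_{n_0}))\bigr)=0$, so for $\mu$-a.e.\ $x$ the entire forward orbit of $x$ lies in $J\setminus orb(J_{n_0})$ and in particular avoids $J_\infty\supset P$. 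To upgrade this into $\supp(\mu)\cap J_\infty=\emptyset$, I would argue as follows. If $Y:=\supp(\mu)\cap J_\infty$ were nonempty, then, since both $\supp(\mu)$ and $J_\infty$ are closed and forward-invariant, $\omega(x_0)\subset Y$ for any $x_0\in Y$; by the \cite{LPI} fact that $P$ meets $\omega(x_0)$ for every $x_0\in J_\infty$, one gets $Y\cap P\ne\emptyset$. Applying Ma\~n\'e's theorem to an appropriate compact forward-invariant subset of $\supp(\mu)$ bounded away from $P$---which contains no critical points (the critical point $0$ belongs to $P$) and no parabolic periodic points in our setting---yields hyperbolicity of $\supp(\mu)$; then $Y$ is a hyperbolic forward-invariant subset of $J_\infty$, contradicting \cite{LPI}. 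Hence $Y=\emptyset$, and hyperbolicity of $\supp(\mu)$ gives $\chi(\mu)>0$: alternative~(i).

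The main obstacle is the topological upgrade from the measure-theoretic $\mu(J_\infty)=0$ to $\supp(\mu)\cap J_\infty=\emptyset$: a~priori $\supp(\mu)$ could touch $J_\infty$ on a $\mu$-null boundary set, and a direct application of Ma\~n\'e's theorem requires $\supp(\mu)\cap P=\emptyset$. Closing this gap should rest on a careful analysis of how $\mu$-typical orbits escape neighborhoods of $P$, feeding back into Ma\~n\'e-style hyperbolicity and into the \cite{LPI} obstruction inside $J_\infty$.
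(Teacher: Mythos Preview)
Your dichotomy $\mu(J_\infty)\in\{0,1\}$ is correct, but both branches have gaps. In the case $\mu(J_\infty)=1$ you invoke a ``strong form'' of Theorem~\ref{thm:meas+} asserting that invariant measures on $J_\infty$ are supported on $P$; but Theorem~\ref{thm:meas+} only forbids positive exponent and says nothing about the support. The paper obtains $\supp(\mu)\subset P$ through a genuinely separate step (Proposition~\ref{pr}, $(1)\Rightarrow(2)$): if orbits of points in deep $orb(J_n)$ did not spend an arbitrarily large fraction of time in a given neighborhood of $P$, a weak-$*$ limit of their empirical measures would produce an invariant measure on $J_\infty$ with positive exponent (using that $|(f^k)'(y)|>\lambda>1$ whenever $y,f^k(y)$ lie in a fixed compact set disjoint from $P$ and $k$ is large). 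This is what forces any invariant measure on $J_\infty$ to be supported on $P$; without it you only get $\chi(\mu)=0$.

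In the case $\mu(J_\infty)=0$, your Ma\~n\'e route to $\chi(\mu)>0$ cannot work, for exactly the reason you flag: $\supp(\mu)$ need not avoid $P$ (already the measure of maximal entropy has $\supp(\mu)=J\supset P$), so there is no globally hyperbolic forward-invariant set to feed into \cite{LPI}. The paper instead chooses an open $U$ with $\overline U\cap P=\emptyset$ and $\mu(U)>0$ and shows, via the same expansion estimate as above, that the first return map $F:U\to U$ has positive exponent for the induced measure $\mu_U$, whence $\chi(\mu)=\mu(U)\,\chi_F(\mu_U)>0$. Finally, the obstacle you isolate---upgrading $\mu(J_\infty)=0$ to $\supp(\mu)\cap J_\infty=\emptyset$---is not a gap you failed to close but an impossibility: the measure of maximal entropy has $\chi=\log 2>0$ and $\mu(J_\infty)=0$, yet $\supp(\mu)=J\supset J_\infty$. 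The paper's own argument for $(3)\Rightarrow(4)$ in Proposition~\ref{pr} does not establish topological disjointness either; alternative~(i) should be read as the measure-theoretic statement $\mu(J_\infty)=0$ (equivalently $\supp(\mu)\not\subset J_\infty$), which is all that is actually proved.
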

In particular, the set $J_\infty\setminus P$ is "measure invisible", see also Proposition \ref{pr} which is a somewhat stronger version of Corollary \ref{coro:J}.

%\begin{theorem}\label{thm:meas+}
%Suppose that $f(z)=z^2+c$ admits infinitely many satellite renormalizations.
%Then $J_\infty$ carries no $f$-invariant probability measures with the positive Lyapunov exponent.
%\end{theorem}

%Let
%$$orb(J_n)=\cup_{j\ge 0}f^j(J_n)=\cup_{j=0}^{p_n-1}f^j(J_n).$$
%Note that, for every $n$, repelling periodic orbits of $f$ are dense in $orb(J_n)$
%while each component of $J_\infty$ is wandering.
%in particular, $J_\infty$ contains no periodic points of $f$.

%Given a holomorphic map $g$ in a neighborhood of its invariant set $Y$, let
%an $f$-invariant set $Y$, let
%$$\overline\chi_g(Y)=\sup\{\overline\chi_g(x)| x\in Y\}$$
%where $\overline\chi_g(x)=\limsup\frac{1}{m}\log|(g^m)'(x)|$, the upper Lyapunov exponent of $g$ at the point $x$.
%Clearly, $\overline\chi_g(Y_1)\le \overline\chi_g(Y_2)$ if $Y_1\subset Y_2$.
%In particular, $\overline\chi_f(orb(J_n))$, $n=1,2,...$, is a decreasing sequence of positive numbers.
%The next corollary answers partly a question of the third author that inspired the present work.
\begin{coro}\label{expat0}
If $f$ admits infinitely many satellite renormalizations, then
\begin{equation}\label{expany}
\limsup_{n\to \infty} \frac{1}{n}\log|(f^n)'(x)|\le 0 \mbox{ for any } x\in J_\infty,
\end{equation}
and
\begin{equation}\label{expcr}
\lim_{n\to \infty} \frac{1}{n}\log|(f^n)'(c)|=0.
\end{equation}
%and
%\begin{equation}\label{expset}
%\lim_{n\to\infty}\overline\chi_f(orb(J_n))=0.
%\end{equation}
%where $\chi_{n, per}$ is the supremum of Lyapunov exponents over all periodic orbits of the $n$-renormalization, i.e, those that are contained in $\cup_{i=0}^{p_n-1} J_n$.
\end{coro}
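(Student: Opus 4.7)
The plan is to derive both parts from Theorem~\ref{thm:meas+}: part \eqref{expany} via the standard empirical-measure construction, and part \eqref{expcr} by combining \eqref{expany} at $x=c$ with a matching lower bound from \cite{LPS}.

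For \eqref{expany}, fix $x\in J_\infty$ and argue by contradiction: suppose that $\limsup_{n\to\infty}\tfrac{1}{n}\log|(f^n)'(x)|=\chi>0$. The orbit of $x$ cannot pass through the critical point $0$, since otherwise $(f^n)'(x)=0$ for all sufficiently large $n$ and the limsup is $-\infty$. Choose $n_k\to\infty$ along which $\tfrac{1}{n_k}\log|(f^{n_k})'(x)|\to \chi$, and form the empirical measures
$$\mu_k \;=\; \frac{1}{n_k}\sum_{i=0}^{n_k-1}\delta_{f^i(x)}.$$
After extracting a subsequence, $\mu_k$ converges weakly-$*$ to an $f$-invariant probability measure $\mu$ by the standard Krylov--Bogolyubov argument, and $\supp\mu\subset J_\infty$ since $J_\infty$ is compact and forward invariant. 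The function $\log|f'(z)|=\log 2+\log|z|$ is upper semicontinuous on $\C$ and bounded above on the compact set $J$, so the upper-semicontinuous form of the Portmanteau theorem yields
$$\int \log|f'|\,d\mu \;\ge\; \limsup_{k\to\infty}\int\log|f'|\,d\mu_k \;=\; \limsup_{k\to\infty}\frac{1}{n_k}\log|(f^{n_k})'(x)| \;=\; \chi \;>\;0.$$
In particular $\mu$ has no atom at $0$, and its Lyapunov exponent $\chi(\mu)$ is strictly positive; this contradicts Theorem~\ref{thm:meas+}, possibly after passing to an ergodic component of $\mu$.

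For \eqref{expcr}, the critical value $c=f(0)$ belongs to $P\subset J_\infty$, so \eqref{expany} at $x=c$ gives $\limsup_{n\to\infty}\tfrac{1}{n}\log|(f^n)'(c)|\le 0$. The complementary inequality $\liminf_{n\to\infty}\tfrac{1}{n}\log|(f^n)'(c)|\ge 0$ is supplied by \cite{LPS}, and combining the two yields \eqref{expcr}. The most delicate step in \eqref{expany} is controlling the logarithmic singularity of $\log|f'|$ at $0$ when passing to the weak-$*$ limit, but the singularity is in the favorable direction: because $\log|f'|$ is bounded \emph{above} and upper semicontinuous, Portmanteau applies exactly as needed, and any hypothetical atom of $\mu$ at $0$ is automatically excluded by the strict positivity of the resulting lower bound.
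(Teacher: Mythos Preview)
Your proof is correct and follows essentially the same approach as the paper: derive \eqref{expany} from Theorem~\ref{thm:meas+} via the standard empirical-measure construction, then combine \eqref{expany} with the lower bound from \cite{LPS} to get \eqref{expcr}. You spell out in full the ``standard argument'' that the paper only cites by reference, and your treatment of the logarithmic singularity of $\log|f'|$ at $0$ via upper semicontinuity and Portmanteau is exactly the right way to handle the passage to the weak-$*$ limit.
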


%The first conclusion (\ref{expcr}) answers partly a question asked by Weixiao Shen which inspired the present work as well as the prior \cite{LPI}.

%\begin{remark}\label{intro}
%The limit (\ref{expset}) can be rewritten as follows. First, notice that, given $n$, any map $f^{p_n}: f^j(J_n)\to f^j(J_n)$, $0<j<p_n$,
%which is defined and holomorphic in a neighborhood of $f^j(J_n)$, is holomorphically conjugate to $f^{p_n}: J_n\to J_n$. It follows,
%$$\overline\chi_{f^{p_n}}(f^j(J_n))=\overline\chi_{f^{p_n}}(J_n), \mbox{ for } j=1,2,\cdots, p_n-1.$$
%In turn, (\ref{expset}) is equivalent to:
%$$\overline\chi_{f^{p_n}}(J_n)=o(p_n) \mbox{ as } n\to\infty.$$
%\end{remark}
%\begin{proof}[Proof of Corollary \ref{expat0}]
%By \cite{LPS}, $\liminf_{n\to \infty} \frac{1}{n}\log|Df^n(c)|\ge 0$. On the other hand, if
%$\overline\chi(c)$ was strictly positive that would imply, by a standard argument, the existence
%of an $f$-invariant measure with positive exponent supported in $\omega(c)\subset J_\infty$,
%with a contradiction to Theorem \ref{thm:meas+}. This proves (\ref{expcr}).
%The proof of (\ref{expset}) is similar taking into account that $\overline\chi(orb(J_n))>0$ for all $n$.
%\end{proof}

%An immediate consequence is:
%\begin{coro}\label{c:main}
%Suppose that $f(z)=z^2+c$ admits infinitely many satellite renormalizations.
%Then any invariant measure of $f: J_\infty\to J_\infty$ is, in fact, supported on $\omega(0)$ and has a non-positive exponent.
%\end{coro}

For the proof of Corollaries \ref{coro:J}-\ref{expat0}, see Sect. \ref{proofJ}. The proof of Theorem \ref{thm:meas+} occupies sections \ref{prel}-\ref{thm:proof}.

As in \cite{LPI}, we use heavily a general result of \cite{Pr} on the accessibility although the main idea of the proof is different.
Indeed,
in \cite{LPI} we utilize the fact that the map cannot be one-to-one on an infinite hyperbolic set.
At the present paper, to prove Theorem \ref{thm:meas+} we assign, loosely speaking, an external ray to a typical point of a hypothetical measure with positive exponent
such that the family of such rays is invariant and has a controlled geometry.
Given a satellite renormalization $f^{p_n}$ we use the measure and the above family of rays to choose a point $x$ and
build a special domain that covers a "small" Julia set $J_{n,x}\ni x$ such that there is a univalent pullback of the domain by $f^{p_n}$ along the
renormalization that
enters into itself, leading to a contradiction. The choice of $x$ is 'probabilistic', i.e., made from sets of positive measure,
and the construction of the domain differs substantially depending on whether all satellite renormalizations of $f$ are doubling
or not.

%In the course of the proof we establish some general facts which can be of independent interests as e.g.
%Lemma \ref{p2}.

{\bf Acknowledgment.} The conclusion (\ref{expcr}) of Corollary \ref{expat0} that the Lyapunov exponent at the critical value equals zero answers partly a question by Weixiao Shen\footnote{ Shen asked the following question, in relation with Corollary 5.5 of \cite{LPS}: Is it possible that
the upper Lyapunov exponent at a critical value $v$ of a polynomial $g$ is positive assuming that $g$ is infinitely-renormalizable around $v$? See also Remark \ref{r:gen}}
which inspired the present work as well as the prior one \cite{LPI}. The authors thank the referee for careful reading the paper and many helpful comments.
%We thank Weixiao Shen for helpful comments.

\section{Preliminaries}\label{prel}
Here
%and in Section \ref{prelcont}
we collect, for further references and use throughout the paper, necessary notations and general facts. (A)-(D) are slightly adapted versions of (A)-(D) in Sect. 2, \cite{LPI}
which are either well-known \cite{mcm}, \cite{Mi1}, %follow readily from known ones,
or are proved here.

Let $f(z)=z^2+c$ be infinitely renormalizable. We keep the notations of the Introduction.
%further references necessary notations and general (combinatorial) facts about infinitely-renormalizable quadratic polynomials.
%Some of them are well-known see e.g.~\cite{Mi1}, and others follow readily from the known ones.
%So let $f(z)=z^2+c$ be infinitely renormalizable.

{\bf (A)}. Let $G$ be the Green function of the basin of infinity $A(\infty)=\{z| f^n(z)\to \infty, n\to\infty\}$ of $f$ with the standard normalization
at infinity $G(z)=\ln|z|+O(1/|z|)$. The external ray $R_t$ of argument $t\in {\bf S^1}=\R/\Z$ is a gradient line to the level sets of $G$
that has the (asymptotic) argument $t$ at $\infty$.
%For $t\in {\bf S^1}={\bf R}/{\bf Z}$, $R_t$ denotes the external ray of argument $t$.
$G(z)$ is called the (Green) level of $z\in A(\infty)$ and the unique $t$ such that $z\in R_t$
is called the (external) argument (or angle) of $z$.
A point $z\in J(f)$ is accessible if there is an external ray $R_t$ which lands at (i.e., converges to) $z$. Then $t$ is called an (external) argument (angle) of $z$.
%By the {\it first point of intersection} of a ray $R_t$ with a closed set $E$ we mean a point of $R_t\cap E$ with
%the minimal level.

Let $\sigma: {\bf S^1}\to {\bf S^1}$ be
the doubling map $\sigma(t)=2t(\mod 1)$. Then $f(R_t)=R_{\sigma(t)}$.

Every point $a$ of a repelling cycle $O_a$ of period $p$ is the landing point of an equal number $v$, $1\le v<\infty$, of external rays where
$v$ coincides with the number of connected components of $J(f)\setminus \{a\}$. Their arguments are permuted by $\sigma^p$ according to a rational rotation number $r/q$
(written in the lowest term); $v/q$ is the number of cycles of rays landing at $a$.
If $v\ge 2$, there is an {\bf alternative} \cite{Mi1}:

$r/q=0/1$, then $v=2$ so that each of two external ray landing at $a$ is fixed by $f^p$,

$r/q\neq 0/1$, i.e., $q\ge 2$, then $v=q$, i.e., the arguments of $q$ rays landing at $a$ form a single cycle of $\sigma^p$.

{\bf (B)}.
%Let $1=p_0<p_1<...<p_n<...$ be the sequence
%of consequtive periods of simple renormalizations of $f$, $J_n$ denotes the Julia set of the
%$n$-renormalization which contains $0$,
%$$J_\infty=\cap_{n\ge 0}\cup_{j=0}^{p_n-1}f^j(J_n)$$
%It is a compact set which contains $\omega(0)$.
All periodic points of $f$ are repelling.
Given a small Julia set $J_n$ containing $0$, sets $f^j(J_n)$, $0\le j< p_n$, are called small Julia sets of level $n$. Each $f^j(J_n)$ contains $p_{n+1}/p_n\ge 2$ small Julia sets of level $n+1$.
We have $J_n=-J_n$. Since all renormalizations are simple, for $j\neq 0$, the symmetric companion $-f^j(J_n)$ of $f^j(J_n)$ can intersect the orbit $orb(J_n)=\cup_{j=0}^{p_n-1}f^j(J_n)$ of $J_n$
only at a single point which is periodic.
On the other hand,
since only finitely many external rays converge to each periodic point of $f$, the set $J_\infty$ contains no periodic points. In particular,
each component $K$ of $J_\infty$ is wandering, i.e., $f^i(K)\cap f^j(K)=\emptyset$ for all $0\le i<j<\infty$.
All this implies that $\{x,-x\}\subset J_\infty$ if and only if $x\in K_0:=\cap_{n=1}^\infty J_n$.

{\bf Given $x\in J_\infty$, for every $n$, let $j_n(x)$ be the unique $j\in\{0,1,\cdots, p_n-1\}$ such that
$x\in f^{j_n(x)}(J_n)$. Let $J_{n,x}=f^{j_n(x)}(J_n)$ be a small Julia set of level $n$ containing $x$ and
$K_x=\cap_{n\ge 0}J_{n,x}$,
a component of $J_\infty$ containing $x$.}

In particular, $K_0=\cap_{n\ge 0}J_n$ is the component of $J_\infty$ containing $0$ and $K_c=\cap_{n=1}^\infty f(J_n)$, the component containing $c$.

Note that either $p_n-j_n(x)\to\infty$ as $n\to\infty$ or $p_n-j_n(x)=N$ for some $N\ge 0$ and all $n$,
that is, $f^N(x)\in K_0$.
This is so since the sequence of the sets $J_n$ is non-increasing, hence $J_{n,x}$ non-increasing, hence $p_n-j_n(x)$ (the time to reach $J_n$) non-decreasing.

The map $f:K_x\to K_{f(x)}$ is two-to-one if $x=0$ and one-to-one otherwise.
%$J_n=-J_n$. If $j\neq 0$, then $-f^j(J_n)$ can intersect the orbit $orb(J_n)=\cup_{j=0}^{p_n-1}f^j(J_n)$ of $J_n$
%only at a single point which is periodic.
%Indeed, otherwise there is $j_1\neq j$ such that $f^{j_1+1}(J_n)$ intersects $f^{j+1}(J_n)$ at
Moreover, for every $y\in J_\infty$, $f^{-1}(y)\cap J_\infty$ consists of two points if $y\in K_c$ and consists of a single point
otherwise. Denote
$$J_\infty'=J_\infty\setminus\cup_{j=-\infty}^\infty f^j(K_0).$$
We conclude that:

{\bf $f: J_\infty'\to J_\infty'$ is a homeomorphism}. Given $x\in J_\infty'$ and $m>0$, denote $x_m=f^m(x)$ and
$$x_{-m}=f|_{J_\infty'}^{-m}(x),$$
that is,
%moreover, for every $$x\in J_\infty\setminus \cup_{j\ge 0} f^j(K_c)$$ and every $m>0$
the only point $f^{-m}(x)\cap J_\infty$.

{\bf (C)}. Given $n\ge 0$, the map $f^{p_n}:f(J_n)\to f(J_n)$ has two fixed points:
the separating fixed point $\alpha_n$ (that is, $f(J_n)\setminus \{\alpha_n\}$ has at least two components) and
the non-separating $\beta_n$ (so that $f(J_n)\setminus \beta_n$ has a single
component).
%We denote by $C_{n,\beta}$ a periodic orbit of $f$ which contains $\beta_n$.

%\begin{remark}\label{prim}

%In other words,
%the $n$-renormalization is satellite if and only if $\beta_n=\alpha_{n-1}$ and $k_n/q_n$ is equal to the
%rotation number of the $\alpha$-fixed point of the $n-1$-renormalization.
%There is a specific such "sector" at the point $\beta_n$ as follows.
For every $n>0$, there are $0<t_n<\tilde t_n<1$ such that two rays $R_{t_n}$ and $R_{\tilde t_n}$
land at the non-separating fixed point $\beta_n\in f(J_n)$
of $f^{p_n}$ and the component $\Omega_n$ of ${\bf C}\setminus (R_{t_n}\cup R_{\tilde t_n}\cup \beta_n)$
which does not contain $0$ has two characteristic propertiers \cite{Mi1}:

(i) $\Omega_n$ contains $c$ and is disjoint with the forward orbit
of $\beta_n$,

(ii) for every $1\le j<p_n$, consider arguments (angles) of external rays which land at
$f^{j-1}(\beta_n)$.
The angles split ${\bf S^1}$ into finitely many arcs. Then the length of any such arc is bigger than
the length of the arc
$$S_{n,1}=[t_n, \tilde t_n]=\{t: R_t\subset \Omega_n\}.$$
Denote
$$t_n'=t_n+\frac{\tilde t_n-t_n}{2^{p_n}}, \ \ \tilde t_n'=\tilde t_n-\frac{\tilde t_n-t_n}{2^{p_n}}.$$
The rays $R_{t_n'}$, $R_{\tilde t_n'}$ land at a common point $\beta_n'\in f^{-p_n}(\beta_n)\cap \Omega_n$.
Introduce an (unbounded) domain $U_n$ with the boundary to be two curves
$R_{t_n}\cup R_{\tilde t_n}\cup \beta_n$ and $R_{t_n'}\cup R_{\tilde t_n'}\cup \beta_n'$.
Then $c\in U_n$ and $f^{p_n}: U_n\to \Omega_n$ is a two-to-one branched covering. Also,
$$
f(J_n)=\{z: f^{kp_n}(z)\in \overline U_n, G(f^{kp_n}(z))< 10, k=0,1,...\}.
$$
%Moreover, for any $n$, the closure of $U_{n+1}$ is contained in $U_n$.
Let
$$s_{n,1}=[t_n, t_n']\cup [\tilde t_n', \tilde t_n]$$
so that $s_{n,1}\subset S_{n,1}$ and argument of any ray to $f(J_n)$ lies in $s_{n,1}$.

Let us iterate this construction. Given $1\le j\le p_n$, let $S_{n,j}$ {\bf be one of the two arcs of ${\bf S^1}$
with end points
$$t_{n,j}=\sigma^{j-1}(t_n), \tilde t_{n,j}=\sigma^{j-1}(\tilde t_n)$$
such that
arguments of any ray to $f^{j}(J_n)$ lies in} $S_{n,j}$.
Let
$$s_{n,j}=\sigma^{j-1}(s_{n,1})=[t_{n,j}, t_{n,j}']\cup [\tilde t_{n,j}', \tilde t_{n,j}]$$
where $t_{n,j}'=\sigma^{j-1}(t_n'), \tilde t_{n,j}'=\sigma^{j-1}(\tilde t_n')$.
Then
$$s_{n,j}\subset S_{n,j}$$
and argument of any ray to $f^j(J_n)$ lies in fact in $s_{n,j}$.
Note that
\begin{equation}\label{wind}
t_{n,j}'-t_{n,j}=\tilde t_{n,j}-\tilde t_{n,j}'=\frac{\tilde t_n-t_n}{2^{p_n-j+1}}<\tilde t_n-t_n<1/2.
\end{equation}
So $\sigma^{j-1}: s_{n,1}\to s_{n,j}$ is a homeomorphism and $s_{n,j}$ has two components ('windows') $[t_{n,j}, t_{n,j}']$ and $[\tilde t_{n,j}', \tilde t_{n,j}]$ of equal length.

Let $U_{n,j}=f^{j-1}(U_n)$ and $\beta_{n,j}=f^{j-1}(\beta_n)$. The domain $U_{n,j}$ is bounded by
two rays $R_{t_{n,j}}\cup R_{\tilde t_{n,j}}$ converging to $\beta_{n,j}$ and completed by $\beta_{n,j}$ along with two rays
$R_{t_{n,j}'}\cup R_{\tilde t_{n,j}'}$ completed by their common limit point $f^{j-1}(\beta_n')$ where
$t_{n,j}'=\sigma^{j-1}(t_n'), \tilde t_{n,j}'=\sigma^{j-1}(\tilde t_n')$.

By (i)-(ii), for a fixed $n$, domains $U_{n,j}$, $1\le j\le p_n$, are pairwise disjoint.

Let $U_{n,j-p_n}$ be a component of $f^{-(p_n-j)}(U_n)$ which is contained in $U_{n,j}$.
%Let $s^1_{n,j}$ the set of arguments of rays entering $U_{n,j-p_n}$. The $s^1_{n,j}$ consists of $4$ components so that $\sigma^{p_n}$ map homeomorphically
%each of these components onto one and only one 'windows' of $s_{n,j}$.
Then
\begin{equation}\label{mapU}
f^{p_n}: U_{n,j-p_n}\to U_{n,j}
\end{equation}
is a two-to-one branched covering and
$$f^{j-1}(J_n)=\{z: f^{kp_n}(z)\in \overline U_{n, j-p_n}, G(f^{kp_n}(z))<10, k=0,1,...\}.$$
Let $s^1_{n,j}$ be the set of arguments of rays entering $U_{n,j-p_n}$. Then $s^1_{n,j}$ consists of $4$ components so that $\sigma^{p_n}$ map homeomorphically
each of these components onto one of the 'windows' of $s_{n,j}$.

Furthermore, let
$$\Omega_{n,j}=f^{j-1}(\Omega_n).$$
Unlike the map (\ref{mapU}),
the map
\begin{equation}\label{mapOmega}
f^{p_n}: U_{n,j}\to \Omega_{n,j}
\end{equation}
is a two-to-one branched covering
only {\it assuming} $f^{j-1}: \Omega_{n}\to \Omega_{n,j}$ is a homeomorphism, which holds {\it if and only if}
$\sigma^{j-1}: S_{n,1}\to \sigma^{j-1}(S_{n,1})$ is a homeomorphism. In the latter case,
$$\sigma^{j-1}(S_{n,1})=S_{n,j}.$$

{\bf Primitive} vs {\bf satellite} renormalizations.
Let $n\ge 2$ and $r_n/q_n$ be the rotation number of $\beta_n$.
%The renormalization $f^{p_n}$ is primitive
%if and only if $\beta_n$ is the landing point of exactly two rays and $k_n/q_n=0/1$;
%under the map $f^{p_{n-1}}:f(J_{n-1})\to f(J_{n-1})$ and it satellite otherwise, i.e., $q_n\ge 2$.
The next claim is well-known, we include the proof for reader's convenience.
\begin{lemma}\label{satellite}
\begin{enumerate}
\item the renormalization $f^{p_n}$ is primitive
if and only if $r_n/q_n=0/1$, the period of $\beta_n$ is $p_n$ and $\beta_n$ is the landing point of exactly two rays and they are fixed by
$f^{p_n}$,
\item points $\beta_n$, $n=1,2,\cdots$ are all different,
\item $f^{p_n}$ is satellite if and only if the $\alpha$-fixed point $\alpha_{n-1}$ of $f^{p_{n-1}}: f(J_{n-1})\to f(J_{n-1})$ coincides with
the $\beta$-fixed point $\beta_n$ of $f^{p_n}:f(J_n)\to f(J_n)$. In particular,
$\cup_{j=0}^{q_n-1} f^{jp_{n-1}}(f(J_n))\subset f(J_{n-1})$ and $p_n=q_n p_{n-1}$.
Moreover, each of $p_{n-1}$ points of the orbit of $\beta_n$ is the landing points of precisely $q_n$ rays which are permuted by $f^{p_{n-1}}$
according to the rotation number $r_n/q_n$. Completed by the landing point they split $\C$ into $q_n$ "sectors" such that the closure of each of them contains a unique "small"
Julia set of level $n$ sharing a common point with the boundary of the "sector".
\end{enumerate}
\end{lemma}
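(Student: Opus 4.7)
The plan is to prove part (3) first --- it captures the structural distinction between primitive and satellite renormalizations --- and then deduce (1) and (2) as corollaries using Milnor's alternative from (A).

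For the direction $\alpha_{n-1} = \beta_n \Rightarrow$ satellite in (3), I would apply Milnor's alternative at $\alpha_{n-1}$ under the map $\sigma^{p_{n-1}}$. Since $\alpha_{n-1}$ is the \emph{separating} fixed point of $f^{p_{n-1}}$ on $f(J_{n-1})$, at least two rays land there, so its rotation number $r/q$ satisfies $q\ge 2$; the $q$ external rays landing at $\alpha_{n-1}$ form a single $\sigma^{p_{n-1}}$-cycle and split $\C$ into $q$ sectors cyclically permuted by $f^{p_{n-1}}$. Because $\beta_n = \alpha_{n-1}$ is non-separating in $f(J_n)$, the connected set $f(J_n)\setminus\{\beta_n\}$ lies in a single sector, and its iterates $f^{kp_{n-1}}(f(J_n))$ for $k=0,\ldots,q-1$ fill the remaining sectors as $q$ distinct small Julia sets of level $n$ sharing the boundary point $\alpha_{n-1}$. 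Hence $f^{p_n}$ is satellite, $p_n = q\,p_{n-1}$, $q=q_n$; transporting by $f^{j-1}$ for $j=1,\ldots,p_{n-1}$ spreads the picture around the full $f$-orbit of $\beta_n$, giving the final statement of (3). For the converse, assume $f^{p_n}$ is satellite. Inside $f(J_{n-1})$ the $d := p_n/p_{n-1}$ small Julia sets of level $n$ are cyclically permuted by $f^{p_{n-1}}$, and some two of them touch at a periodic point $a$. Invoking the standard structure theory for simple quadratic renormalizations (via hybrid equivalence of $(f^{p_{n-1}}, U_{n-1}, V_{n-1})$ to $z^2 + c^{(n-1)}$, where the satellite ``bouquet'' structure of the straightened map is classical), $a$ is a common point of all $d$ such sets and is fixed by $f^{p_{n-1}}$. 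Since $a$ separates these touching sets (hence separates $f(J_{n-1})$) while $\beta_{n-1}$ is non-separating, $a = \alpha_{n-1}$; by simplicity of the level-$n$ renormalization, $a$ does not separate $f(J_n)\setminus\{a\}$, so $a = \beta_n$.

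For part (1), primitivity of $f^{p_n}$ makes the $p_n$ small Julia sets $f^j(J_n)$ pairwise disjoint, so $\beta_n, f(\beta_n),\ldots,f^{p_n-1}(\beta_n)$ sit in distinct small Julia sets and are therefore distinct, giving $\beta_n$ minimum $f$-period exactly $p_n$. Milnor's alternative applied at $\beta_n$ under $\sigma^{p_n}$ then forces $r_n/q_n = 0/1$: otherwise $q_n\ge 2$ sectors would be cyclically permuted by $f^{p_n}$ with non-trivial rotation, contradicting that $f^{p_n}$ preserves $f(J_n)\setminus\{\beta_n\}$, which lies in a single sector. Hence exactly two rays land at $\beta_n$, both fixed by $f^{p_n}$. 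The converse follows from (3): if the rotation is $0/1$ and the period is $p_n$, then $\beta_n\neq \alpha_{n-1}$ (the latter has $f$-period at most $p_{n-1} < p_n$), so by (3) the renormalization cannot be satellite, hence is primitive. For part (2), suppose $\beta_n = \beta_m$ with $n<m$. By (1) and (3) the $f$-period of the common point lies in $\{p_n,p_{n-1}\}\cap\{p_m,p_{m-1}\}$; since $p_{n-1}<p_n\le p_{m-1}<p_m$, the only possible intersection is $p_n = p_{m-1}$, forcing $m=n+1$, $f^{p_n}$ primitive, $f^{p_m}$ satellite, and $\beta_n=\beta_{n+1}=\alpha_n$. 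This contradicts the defining property in (C) that $\alpha_n$ (separating) and $\beta_n$ (non-separating) are distinct fixed points of $f^{p_n}$ on $f(J_n)$.

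The main obstacle I anticipate is the converse direction of (3): rigorously extracting from the simplicity of the renormalization and the cyclic $f^{p_{n-1}}$-symmetry that the touching periodic point is a single common fixed point of $f^{p_{n-1}}$ separating $f(J_{n-1})$. This is where one must invoke the standard ``bouquet'' structure of satellite renormalizations of quadratic polynomials (i.e.\ that all $d$ touching small Julia sets meet at the $\alpha$-fixed point of the previous level), either directly through the hybrid-equivalent quadratic or by analyzing the external ray structure at the touching point. Once this structural input is in place, the remainder of the proof is routine bookkeeping with Milnor's alternative and the $f$-periods of the distinguished fixed points.
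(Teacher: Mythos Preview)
Your ordering (3)$\to$(1)$\to$(2) and your use of hybrid equivalence for (3) differ from the paper, which goes (1)$\to$(2)$\to$(3) and never straightens: for (3) it reads the $f$-period $p_n/q_n$ of $\beta_n$ off the rotation number, invokes McMullen's theorem that the bouquet $\bigcup_j f^{j\,p_n/q_n}(f(J_n))$ is itself the small filled Julia set of a simple renormalization, deduces $p_n/q_n=p_k$ for some $k<n$ because the $p_i$ enumerate \emph{all} simple renormalization periods, and then uses the already-proved (2) to force $k=n-1$.

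There is a real gap in your converse of (3), and it is not the one you flag. You assert that ``inside $f(J_{n-1})$ \ldots\ some two of them touch'', but satellite only says two of the $p_n$ level-$n$ pieces touch \emph{somewhere}; the partner piece may lie in a different $f^{j'}(J_{n-1})$, and then the straightening of $f^{p_{n-1}}$ on $f(J_{n-1})$ does not witness the touching. You can repair this, but it takes an argument you have not written: the touching point of $f(J_n)$ with another level-$n$ piece is necessarily $\beta_n$ (it is a non-separating fixed point of $f^{p_n}$ on $f(J_n)$); if the partner lies outside $f(J_{n-1})$ then $\beta_n$ is also the non-separating meeting point of two level-$(n-1)$ pieces, hence $\beta_n=\beta_{n-1}$ is $f^{p_{n-1}}$-fixed, hence $\beta_n\in f^{1+kp_{n-1}}(J_n)\subset f(J_{n-1})$ for every $k$, producing touching inside $f(J_{n-1})$ after all (and your hybrid argument then gives $\beta_n=\alpha_{n-1}\neq\beta_{n-1}$, so that case was vacuous). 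The paper's order, with (2) already available, sidesteps this detour entirely. A smaller point: your proof of (2) uses that the $f$-period of $\alpha_{n-1}$ is exactly $p_{n-1}$, which also deserves a sentence.
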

\begin{proof}
(1). $f^{p_n}$ is satellite if and only if $f(J_n)$ meets at $\beta_n$ some other iterate of $J_n$, hence, $r_n/q_n\neq 0$, and vice versa.
(2). assume $\beta:=\beta_n=\beta_m$ for some $0\le n<m$. As $p_n<p_m$, the period of $\beta_m$ is smaller than $p_n$. It follows that $f(J_n)$ contains two small Julia sets of level $m$ that meet at $\beta$, hence,
$\beta$ separates $f(J_n)$, a contradiction as $\beta_n$ does not.
(3). By (1), $f^{p_n}$ is satellite if and only if $r_n/q_n\neq 0$. Let $\tilde p_{n-1}=p_n/q_n$. Then $\tilde p_{n-1}$ is an integer and is equal to the period of $\beta_n$. It follows that $p_n$ sets $f(J_n), f^2(J_n), \cdots, f^{p_n}(J_n)$ are split into
$\tilde p_{n-1}$ connected closed subsets $E_i$, $i=1,\cdots, \tilde \tilde p_{n-1}$ where
$E_1=\cup_{j=0}^{q_n-1} f^{j \tilde p_{n-1}}(f(J_n))$ and $E_i=f^{i-1}(E_1)$, $i=1,2,\cdots, \tilde p_{n-1}$.
Moreover, $0\in E_{p_{n-1}}$ and $f(E_i)=E_{i+1}$, $i=1,\cdots, \tilde p_{n-1}-1$, $f(E_{\tilde p_{n-1}})=E_1$.
By \cite[Theorem 8.5]{mcm}, $f^{\tilde p_{n-1}}$ is a simple renormalization and $E_i$, $i=1,\cdots, \tilde p_{n-1}$ are subsets
of its $\tilde p_{n-1}$ small Julia sets. Since $1=p_0<p_1<...$ are all consecutive periods of simple renormalizations, then
$\tilde p_{n-1}=p_k$ for some $k<n$. Therefore, $\beta_n$-fixed point of $f^{p_n}:f(J_n)\to f(J_n)$ is $\alpha_k$-fixed point of
$f^{p_k}: f(J_{p_k})\to f(J_{p_k})$. As all renormalizations are simple, if $k<n-1$ that would imply that $\beta_n=\beta_{n-1}=...=\beta_{k+1}$,
a contradiction with (2).
The claim about "sectors" follows since each map $f^j$ is one-to-one in a neighborhood of $\beta_n$ and the closure of $\Omega_n$ contains
a single "small" Julia set $f(J_n)$ of level $n$ sharing a common point with $\partial\Omega_n$.
\end{proof}
%\marginpar{ChangeN 17/7}
%In particular, the renormalization $f^{p_n}$ is primitive if and only if the period of $\beta_n$ is precisely $p_n$.
%Then \cite{Mi1} $\beta_n$
%is the landing point of precisely two
%rays which are fixed by $f^{p_n}$.
%If $f^{p_n}$ is satellite, the period of $\beta_n$ is $p_{n-1}=p_n/q_n<p_n$.
%Each of $p_{n-1}$ points of the orbit of $\beta_n$ is the landing points of precisely $q_n$ rays which are permuted by $f^{p_{n-1}}$
%according to the rotation number $r_n/q_n$. Completed by the landing point they split $\C$ into $q_n$ "sectors" such that the closure of each of them contains a "small"
%Julia set of level $n$.
%Each "sector" bounded by two adjacent pair of these rays to one of and is disjoint with all other rays to the point $\beta$, and completed by $\beta_n$ contains
%exactly one of $q_n$ "small"Julia sets of the level $n$.
%For every $n$, let $e_n$ be the period of the point $\beta_n$ under the map $f^{p_{n-1}}:f(J_{n-1})\to f(J_{n-1})$.
%Then $p_n=e_nq_np_{n-1}$.

We need a more refined estimate provided the renormalization is not doubling. Assume $f^{p_n}$ is satellite so that $p_{n-1}=p_n/q_n$ with $q_n\ge 2$ and the rotation number of $\beta_n$ is $r_n/q_n\neq 0/1$.
\begin{lemma}\label{notdoubl}
Assume $f^{p_n}$ is satellite and $q_n=p_n/p_{n-1}\ge 3$, i.e., $f^{p_n}$ is not doubling. Then
\begin{equation}\label{eq:notdoubl}
\sigma^{j-1}: S_{n,1}\to\sigma^{j-1}S_{n,1} \mbox{ is a homeomorphism for } j=1,\cdots, p_{n-1}(q_n-2).
\end{equation}
In particular, given $\zeta\in (0,1/3)$, the length of $\sigma^{j-1}S_{n,1}$ tends to zero as $n\to\infty$ uniformly in $j=1,\cdots, [\zeta p_n]$ (where $[x]$ is the integer part of $x\in\R$).

Moreover, for every $1\le j\le p_{n-1}(q_n-2)$, $S_{n,j}=\sigma^{j-1}(S_{n,1})$ and
%for
%$$\Omega_{n,j}=f^{j-1}(\Omega_n),$$
the map $f^{p_n}: U_{n,j}\to \Omega_{n,j}$ is a two-to-one branched covering such that
$$f^{j}(J_n)=\{z: f^{kp_n}(z)\in \overline U_{n, j}, G(f^{kp_n}(z))<10, k=0,1,...\}.$$

%Fix $\zeta\in (0,1/3)$. Assume $f^{p_n}$ is satellite and $q_n=p_n/p_{n-1}\ge 3$, i.e., $f^{p_n}$ is not doubling. Then, for $j=1,\cdots, [\zeta p_n]$ (where $[x]$ is the integer part of $x\in\R$),
%$\sigma^{j-1}: S_{n,1}\to\sigma^{j-1}S_{n,1}$ is a homeomorphism and the length of $\sigma^{j-1}S_{n,1}$ tends to zero as $n\to\infty$ uniformly in $j$.
%Moreover, for every $1\le j\le [\zeta p_n]$, $S_{n,j}=\sigma^{j-1}(S_{n,1})$ and
%for
%$$\Omega_{n,j}=f^{j-1}(\Omega_n),$$
%the map $f^{p_n}: U_{n,j}\to \Omega_{n,j}$ is a two-to-one branched covering and
%$$f^{j}(J_n)=\{z: f^{kp_n}(z)\in \overline U_{n, j}, k=0,1,...\}.$$
\end{lemma}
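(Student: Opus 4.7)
I plan to prove the homeomorphism statement (\ref{eq:notdoubl}); the other conclusions of the lemma then follow by applying the framework of paragraph (C) to this homeomorphism.

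First I fix the picture at $\beta_n=\alpha_{n-1}$ via Lemma \ref{satellite}(3): exactly $q_n$ external rays land at $\beta_n$, permuted by $\sigma^{p_{n-1}}$ with rotation number $r_n/q_n$, and the $q_n$ resulting arcs $A_0,\ldots,A_{q_n-1}$ of $\mathbf{S}^1$ correspond to the $q_n$ sectors at $\beta_n$. I identify $S_{n,1}=A_0$ with the arc of the sector containing $f(J_n)\ni c$; by property (ii) it is the smallest of these $q_n$ arcs. Whenever $\sigma^{p_{n-1}}$ restricts to a homeomorphism on some $A_i$, its image is an arc whose endpoints are two angles at $\beta_n$ and whose interior contains no other $\beta_n$-angle --- hence another $A_j$, with $|A_j|=2^{p_{n-1}}|A_i|$. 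This induces a permutation $\psi$ of the arcs which, since $\gcd(r_n,q_n)=1$, is a single $q_n$-cycle.

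The key estimate comes from a length chase in this $\psi$-cycle. A full loop of $q_n$ successive doublings cannot return $|A_0|$ to itself, so at least one arrow of the cycle fails the homeomorphism condition. Starting at the smallest arc $A_0$ and using that $\sigma^{p_{n-1}}$-homeomorphic iteration monotonically grows arc length, I argue that the failing arrow is the last one in the cycle, so the first $q_n-1$ arrows are all homeomorphisms and $|A_{\psi^{k}(0)}|=2^{kp_{n-1}}|A_0|$ for $k=0,\ldots,q_n-1$. The normalization $\sum_{k}|A_{\psi^{k}(0)}|=1$ then forces
\[
|S_{n,1}|=|A_0|=\frac{2^{p_{n-1}}-1}{2^{p_n}-1},
\]
and for $j\le p_{n-1}(q_n-2)$, using $p_{n-1}(q_n-1)=p_n-p_{n-1}$,
\[
2^{j-1}|S_{n,1}|\;\le\;2^{p_{n-1}(q_n-2)-1}\cdot\frac{2^{p_{n-1}}-1}{2^{p_n}-1}\;\le\;2^{-p_{n-1}}\;\le\;\tfrac12.
\]
Hence $\sigma^{j-1}$ is injective on $S_{n,1}$; its image is an arc of length $2^{j-1}|S_{n,1}|$ with endpoints $t_{n,j},\tilde t_{n,j}$ containing $\sigma^{j-1}(s_{n,1})$, a subset of the arguments of rays landing on $f^{j}(J_n)$, so the image equals $S_{n,j}$ by the defining characterization of the latter. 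The ``in particular'' clause follows since $[\zeta p_n]\le p_{n-1}(q_n-2)=p_n(1-2/q_n)$ whenever $\zeta<1/3$ and $q_n\ge 3$, and the length bound above tends to $0$ as $n\to\infty$ uniformly in this range.

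The main obstacle is the combinatorial claim that the failing arrow of the $\psi$-cycle occurs last when starting from the smallest arc $A_0$; this is where the hypothesis $q_n\ge 3$ (non-doubling) enters decisively, since for $q_n=2$ the cycle has only two arrows and the monotone doubling pattern has no room to establish itself --- reflecting the need to exclude the doubling case in the lemma. The remaining assertions --- the branched covering $f^{p_n}:U_{n,j}\to\Omega_{n,j}$ and the characterization of $f^{j}(J_n)$ --- are then immediate: the former from the ``iff'' statement preceding (\ref{mapOmega}) applied to the established homeomorphism, and the latter by transporting the $j=1$ characterization in paragraph (C) forward along the now-univalent map $f^{j-1}:\Omega_n\to\Omega_{n,j}$.
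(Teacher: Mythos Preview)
Your arithmetic approach via arc lengths is a legitimate alternative to the paper's geometric argument, but the proof has a genuine gap at the step you yourself flag as ``the main obstacle.'' The heuristic ``homeomorphic iteration monotonically grows arc length, so the failing arrow is the last one'' does not close the argument: monotone growth along homeomorphic steps shows that \emph{some} arrow fails, and the inequality $\ell_0<\ell_{q_n-1}$ shows that the \emph{last} arrow fails, but neither rules out an additional failure at some intermediate $k<q_n-1$. Strict minimality of $\ell_0$ together with the relations $\ell_{k+1}=2^{p_{n-1}}\ell_k-m_k$, $m_k\in\Z_{\ge 0}$, $\sum m_k=2^{p_{n-1}}-1$ is not by itself enough to force $m_0=\cdots=m_{q_n-2}=0$ for arbitrary $p_{n-1},q_n$; one needs extra input specific to orbit portraits realized by quadratic polynomials. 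Your diagnosis of where $q_n\ge 3$ enters is also off: the length formula $|A_0|=(2^{p_{n-1}}-1)/(2^{p_n}-1)$ and the ``only the last arrow fails'' fact both hold for $q_n=2$ as well; the hypothesis $q_n\ge 3$ is needed only so that the range $1\le j\le p_{n-1}(q_n-2)$ is nonempty and so that $\zeta<1/3\le 1-2/q_n$.

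The paper closes exactly this gap by a different mechanism: it works inside $U_{n-1}$ and uses that the $q_n$ angles at $\alpha_{n-1}=\beta_n$ lie in the two windows of $s_{n-1,1}$, with at least one in each. Then every ``interior'' sector $U^j$ ($j\ne 0,q_n-1$) has both boundary rays in a single window, on which $\sigma^{p_{n-1}}$ is already known to be a homeomorphism onto $S_{n-1,1}$; this forces $g=f^{p_{n-1}}$ to be univalent on each such $U^j$, and a short chase locates $c\in U^1=\Omega_n$, giving $g^{q_n-2}:\Omega_n\to g^{q_n-2}(\Omega_n)$ univalent and hence (\ref{eq:notdoubl}). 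If you want to keep your arithmetic route, you can either import this two-window argument to justify that exactly the two ``crossing'' arcs can fail and then identify them, or simply cite the length formula for the characteristic arc from \cite{Mi1}; once $|S_{n,1}|=(2^{p_{n-1}}-1)/(2^{p_n}-1)$ is in hand, your computation $2^{j-1}|S_{n,1}|\le 2^{-p_{n-1}}\le 1/2$ and the identification $\sigma^{j-1}(S_{n,1})=S_{n,j}$ are correct and finish the proof.
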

\begin{proof}
Let $g=f^{p_{n-1}}: U_{n-1}\to \Omega_{n-1}$. Then $g$ is a two-to-one covering of degree $2$ and the critical value $c$.

(1) Recall that $s_{n-1,1}=[t_{n-1}, t_{n-1}']\cup [\tilde t_{n-1}', \tilde t_{n-1}]$ consists of two 'windows' so that $\sigma^{p_{n-1}}$ is orientation preserving homeomorphism of either 'window'
onto $S_{n-1,1}=[t_{n-1}, \tilde t_{n-1}]$.

(2) Consider $q_n$ rays $L_1,...,L_{q_n}$ to $\alpha_{n-1}$.
The map $g$ is a local homeomorphism near $\alpha_{n-1}$ which permutes the rays to $\alpha_{n-1}$ according to the rotation number $\nu:=r_n/q_n\neq 0, 1/2$. %See Figure \ref{fig1}.
In particular, $g$ maps any pair of adjacent rays to $\alpha_{n-1}$ onto another pair of adjacent rays to $\alpha_{n-1}$.

\begin{figure}[h]
\includegraphics[width=10cm]{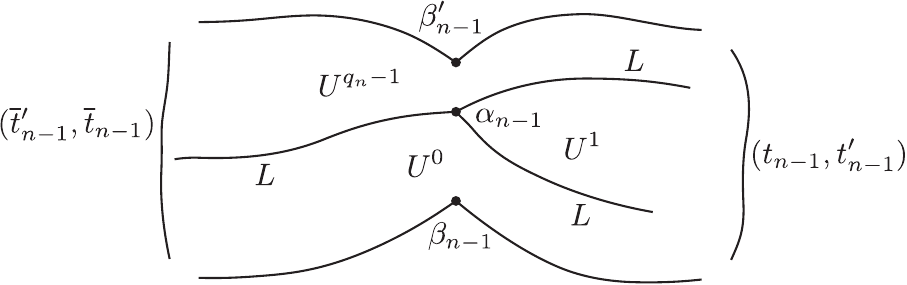}
\caption{ $q_n=3$.}
\label{fig1}
\end{figure}

(3) Not all arguments of these rays lie in a single 'window' $I$ of $s_{n-1,1}$ because otherwise, by (1), the set of those arguments would lie
in the non-escaping set of an orientation preserving homeomorphism $\sigma^{p_{n-1}}: I\to S_{n,1}$, which consists of a fixed point of this map,
a contradiction with the fact that $q_n>1$.

(4) The rays $L_j$ split $U_{n-1}$ into $q_n$ disjoint domains $U^j$, $j=0,1,...,q_n-1$. By the "ideal boundary" $\hat\partial{U^j}$ of $U^j$ we will mean the usual (topological) boundary $\partial U^j$
(in our case, the set of boundary rays completed by their landing points) along with the "boundary at infinity" which is the set of arguments of rays entering $U^j$.
Then define $\hat g$ on $\hat\partial{U^j}$ to be $g$ on $\partial U^j$ and $\sigma^{p_{n-1}}$ on the "boundary at infinity" of $U^j$.

(5) By (3), one of $U^j$, called $U^0$, has $\beta_{n-1}$ in its boundary, and another one, called $U^{q_n-1}$,
has $\beta_{n-1}'$ in the boundary. In particular, the boundary of any other $U^j$, $j\neq 0, q_n-1$, consists of a pair of adjacent rays to $\alpha_{n-1}$ whose arguments
belong to a single 'window' of $s_{n-1,1}$. Therefore, by (1), the rest of indices $j=1,..., q_n-2$ can be ordered in such a way that
$\hat g:\hat\partial{U^j}\to \hat\partial{U^{j+1}}$ is a one-to-one map for $j=1,\cdots,q_n-3$
%homeomorphism and $\sigma^{p_{n-1}}$ is a homeomorphism between
(note that the "boundary at infinity" of each $U^j$, $1\le j\le q_n-2$, consists of a single "arc at infinity"). Therefore, $g: U^j\to U^{j+1}$ is a homeomorphism for $j=1,..., q_n-3$.
The map $\hat g$ on $\hat\partial{U^{q_n-2}}$ is also a one-to-one map on its image $W=g(U^{q_n-2})$ where $W$ is bounded by two adjacent rays to $\alpha_{n-1}$.
$W$ cannot contain $U^0$ because otherwise $W$ would contain $\beta_{n-1}'$, a contradiction. Thus $W$ must contain $\beta_{n-1}'$.
%and it
%is bounded by two external rays to $\alpha_{n-1}$ which contain $\beta_{n-1}'$.
That is, $g(U^{q_n-2})$ covers $U^{q_n-1}$.

Thus, for $j=1,\cdots,q_n-3$, $g: U^j\to U^{j+1}$ is a homeomorphism,
and $g: U^{q_n-2}\to W$ is also a homeomorphism where the image $W=g(U^{q_n-2})$ covers $U^{q_n-1}$ and has two common rays with the boundary of $U^{q_n-1}$.

%As $g(\beta_{n-1}')=\beta_{n-1}$ and because of (1)-(2),
%$\hat g: \hat\partial{U^{q_n-1}}\to \hat\partial{U^0}$ is a one-to-one map too
%homeomorphism and $\sigma^{p_{n-1}}$ is a homeomorphism between
%the boundary of $U^{q_n-1}$ is mapped by $g$ in a one-to-one fashion onto the boundary of $U^0$
%while $\sigma^{p_{n-1}}$ is a homeomorphism between their "boundaries at infinity" (each consisting of two "arcs at infinity").
%where in this case the "boundary at infinity" of each $U^{q_n-1}$, $U^{0}$ consists of two "arcs at infinity".
%Therefore, $g: U^{q_n-1}\to U^0$ is a homeomorphism, too.

(6) The critical value $c$ of $g$ has a unique preimage by g (the critical point of $g$). As
%$c\in\Omega_n\subset U_{n-1}$
$c\in\Omega_n\subset \Omega_{n-1}$ and $\Omega_n$ is bounded by two adjacent rays to $\alpha_{n-1}$,
$c\in U^i$ for some $i\in\{1,\cdots,q_n-1\}$. If $i>1$, then $i-1\ge 1$ while $g$ would not be a homeomorphism of $U^{i-1}$ on its image. This shows that
%On the other hand, since $g^{q_n-1}(c)\in U^{q_n-1}$, (5) implies that
$c\in U^1=\Omega_n$.

Concluding, $U^j=g^{j-1}(\Omega_n)$, $j=1,...,q_n-2$, in particular,
$$\Omega_n, g(\Omega_n),\cdots, g^{q_n-3}(\Omega_n)\subset U_{n-1}$$
and $g^{q_n-2}:\Omega_n\to g^{q_n-2}(\Omega_n)$ is a homeomorphism,
that is, (\ref{eq:notdoubl}) holds. It implies the rest.

\iffalse

$2^{j-1}|t_n-\tilde t_n|$ is less than $1$ and tends to zero as $n\to\infty$ uniformly in $j$.
By a well-known relation \cite{???},
\begin{equation}\label{digits}
|t_n-\tilde t_n|=\frac{(\tilde d_{n-1}-d_{n-1})(2^{p_{n-1}}-1)}{2^{p_n}-1}.
\end{equation}
where $0<d_{n-1}<\tilde d_{n-1}<2^{p_{n-1}}$ are two integer "digits" determined by:
$t_{n-1}=d_{n-1}/(2^{p_{n-1}}-1)$ and $\tilde t_{n-1}=\tilde d_{n-1}/(2^{p_{n-1}}-1)$.
Hence,
$$2^{j-1}|t_n-\tilde t_n|\le \frac{2^{\zeta p_{n-1}q_{n}}(\tilde d_{n-1}-d_{n-1})(2^{p_{n-1}}-1)}{2^{q_{n}p_{n-1}-1}}\le
\frac{2^{\zeta p_{n-1}q_{n}}(2^{p_{n-1}}-2)(2^{p_{n-1}}-1)}{2^{p_{n-1}q_n}-1}$$
$$<\frac{2^{\zeta p_{n-1}q_{n}}2^{2p_{n-1}}}{2^{p_{n-1}q_n}}
=\frac{1}{2^{p_{n-1}((1-\zeta)q_n-2)}}\le 2^{-p_{n-1}(3(1-\zeta)-2)}.$$
The claim follows.

\fi

\end{proof}
{\bf (D)}. Given a compact set $Y\subset J(f)$ denote by $(\tilde Y)_f$ (or simply $\tilde Y$, if the map is fixed)
the set of arguments of the external rays which have their limit sets contained in $Y$.
It follows from (C) that $\tilde K_c=\bigcap_{n=1}^\infty\{ [t_n, t_n']\cup
[\tilde t_n', \tilde t_n]\}$, i.e., it is either a single-point set or
a two-point set.

Since $\tilde K_c$ contains at most two angles, $K_c$ contains at most two different accessible points.
More generally, given $x\in J'_\infty$
%Let $x\in \cup_{\epsilon>0} E_\epsilon\subset E\subset \tilde E$, where
%$\tilde E=\supp (\mu)\setminus\cup_{j=-\infty}^\infty f^j(K_0)$. Recall that the ray $R_{t(x)}$ lands at $x$.
%For every $n$, there is a unique $1\le j_n(x)\le p_n$ such that $x\in f^{j_n(x)}(J_n)$.
%Observe that $p_n-j_n(x)\to\infty$ as $n\to\infty$ if and only if there is no $N\ge 0$ such that $f^N(x)\in K_0$.
let
$$s_{n,j_n(x)}=[t_{n,j_n(x)}, t_{n,j_n(x)}']\cup [\tilde t_{n,j_n(x)}', \tilde t_{n,j_n(x)}].$$
Then $s_{n+1,j_{n+1}(x)}\subset s_{n,j_{n}(x)}$ so that
$$s_{\infty, x}:=\cap_{n>0}s_{n,j_n(x)}$$
is not empty and consists of either one or two components.
Since $p_n-j_n(x)\to\infty$ for $x\in J'_\infty$ we conclude using (\ref{wind}):

$s_{\infty, x}$ {\it consists of either a single point or two different points.
In particular, for
any component $K$ of $J_\infty$ which is not one of $f^{-j}(K_0)$, $j\ge 0$, there is either one or
two rays tending to $K$.}

\

{\bf From now on, $\mu$ is an $f$-invariant probability ergodic measures supported in $J_\infty$:
$\supp \mu\subset J_\infty$, and having a positive
Lyapunov exponent}
$$\chi(\mu):=\int\log|f'|d\mu>0.$$
{\bf (E)}.
%The map $f:K_x\to K_{f(x)}$ is two-to-one if $x=0$ and one-to-one otherwise.
%It follows that, for every $x\in J_\infty\setminus \cup_{j\ge 0} f^j(K_c)$ and every $m>0$ there is one and only one point $x_{-m}\in f^{-m}(x)\cap J_\infty$.
%By \cite{...}, $\mu$-almost every point is accessible by an external ray.
%In the course of the proof of Theorem \ref{thm:meas+} we construct domains which are disks cut by external rays (and on which
%an inverse branch of $f^{p_n}$ is well-defined such that it maps the intersection of a level $n$ Julia set with this domain into itself).
We start with the following basic statement. Parts (i)-(ii) are easy consequences of the invariance of $\mu$ and (B)
while (iii) is a part of Pesin's theory as in \cite[Theorem 11.2.3]{PU} coupled with the structure of $f:J_\infty\to J_\infty$, see (B).
Recall that $J_\infty'=J_\infty\setminus \cup_{j=-\infty}^{\infty} f^j(K_0)$.
\begin{prop}\label{lyplus}
%Let $\mu$ be an $f$-invariant probability ergodic measure
%with a positive Lyapunov exponent $\chi(\mu)>0$ such that $\supp \mu\subset J_\infty$.
%The following holds.

(i) For every $n$ and $0\le j<p_n$, $\mu(f^j(J_n))=1/p_n$.

(ii) $\mu$ has no atoms and $\mu(K)=0$ for every component $K$ of $J_\infty$.

(iii) $\mu(J_\infty')=1$ and $f: J_\infty'\to J_\infty'$ is a $\mu$-measure preserving homeomorphism.
%moreover, there is a subset $E$ of $\tilde E$ such that $\mu(E)=1$, $f: E\to E$ is onto and one-to-one
There exists a measurable positive function $\tilde r(x)>0$ on $J_\infty'$ such that for $\mu$-almost every $x\in J_\infty'$,
and all $n\in {\bf N}$, if $x_{-n}$ is the unique point of $J_\infty'$ with $f^n(x_{-n})=x$,
then a (univalent) branch $g_n: B(x, \tilde r(x))\to {\bf C}$ of $f^{-n}$ is well-defined such that
$g_n(x)=x_{-n}$,
\end{prop}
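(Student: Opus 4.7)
The plan is to prove the three parts in the order (ii), (i), (iii), since (ii) will feed the $\mu$-partition argument of (i) and the reduction to invertible dynamics in (iii); the concluding univalent-branch assertion of (iii) will then follow from standard Pesin theory.

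For (ii), I would argue that any atom $\mu(\{x\})>0$ would yield $\mu(\{f^n(x)\})\ge\mu(\{x\})$ for all $n\ge 0$ by invariance, and since $J_\infty$ contains no periodic points (part (B)), the iterates $f^n(x)$ are pairwise distinct, contradicting $\mu(\C)=1$. For a component $K$ of $J_\infty$, the wandering property from (B) gives $f^n(K)\cap f^m(K)=\emptyset$ for $n\ne m$; together with $\mu(f^n(K))\ge\mu(K)$ from invariance (applied iteratively to $K\subset f^{-1}(f(K))$), the bound $\sum_n\mu(f^n(K))\le 1$ will force $\mu(K)=0$.

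For (i), the simplicity of the renormalizations from (B) implies that each pairwise intersection $f^i(J_n)\cap f^j(J_n)$, $0\le i<j<p_n$, consists of at most a single periodic point, hence is $\mu$-null by (ii), so $\{f^j(J_n)\}_{j=0}^{p_n-1}$ will be a $\mu$-partition of $J_\infty$ and the function $j_n(\cdot)$ from (B) is defined $\mu$-a.e.\ on $J_\infty$. Invariance will then yield $j_n\circ f=(j_n+1)\bmod p_n$ $\mu$-a.e.; pushing $\mu$ forward under $j_n$ to $\Z/p_n\Z$ produces a probability measure invariant under translation by $1$, hence uniform, yielding $\mu(f^j(J_n))=1/p_n$ for every $j$.

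For (iii), each $f^j(K_0)$, $j\in\Z$, will be $\mu$-null: for $j\ge 0$ because $f^j(K_0)\subset J_\infty$ is the continuous image of the connected set $K_0$ and hence contained in a single component of $J_\infty$, and for $j<0$ by $f$-invariance of $\mu$ reducing to the $j=0$ case. Countable additivity will give $\mu(J_\infty')=1$; by (B) the restriction $f:J_\infty'\to J_\infty'$ is a homeomorphism, and $\mu|_{J_\infty'}$ inherits $f$-invariance. The final univalent-branches assertion will be a direct application of Pesin theory for rational maps as in \cite[Theorem~11.2.3]{PU}: since $\chi(\mu)>0$ and, for $\mu$-a.e.\ $x\in J_\infty'$, the backward orbit $(x_{-n})\subset J_\infty'$ avoids the unique critical point $0\in K_0$ entirely by definition of $J_\infty'$, Pesin's construction will yield the measurable radius $\tilde r(x)>0$ together with univalent branches $g_n$ of $f^n$ on $B(x,\tilde r(x))$ sending $x$ to $x_{-n}$. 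The main (mild) bookkeeping obstacle will be identifying the branch produced by Pesin's theorem with the specific branch landing in $J_\infty'$, but this is forced by the uniqueness of $f$-preimages in $J_\infty'$ recorded in (B).
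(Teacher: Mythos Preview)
Your proposal is correct and follows essentially the same route as the paper, which only sketches the argument by saying that (i)--(ii) are easy consequences of invariance and the structural facts in (B), and that (iii) is Pesin theory (\cite[Theorem~11.2.3]{PU}) coupled with the invertibility of $f$ on $J_\infty'$. Your reordering (ii)$\Rightarrow$(i) is harmless and makes explicit why the pairwise intersections $f^i(J_n)\cap f^j(J_n)$ are $\mu$-null; the push-forward to $\Z/p_n\Z$ is a clean way to phrase the invariance step, and your remark that the natural extension is trivial here (since $f|_{J_\infty'}$ is a homeomorphism) correctly pins down why the Pesin branch coincides with the unique branch sending $x$ to $x_{-n}\in J_\infty'$.
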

\begin{remark} The branch $g_n$ of $f^{-n}$ depends on $n$ and $x_{-n}$ but
it should be clear from the context which points $x$ and $x_{-n}$ are meant.
\end{remark}
%Given $n\ge 0$, the map $f^{p_n}:f(J_n)\to f(J_n)$ has two fixed points:
%a separating fixed point $\alpha_n$ (so that $f(J_n)\setminus \alpha_n$ has at least two components) and
%the non-separating $\beta_n$ (so that $f(J_n)\setminus \beta_n$ has a single
%component).
%We denote by $C_{n,\beta}$ a
%periodic orbit of $f$
%which contains $\beta_n$. Recall that the renormalization $f^{p_n}:f(J_n)\to f(J_n)$ is called
%{\it primitive} if the rotation number of the point $\beta_n$ is equal to $0$ and {\it satellite}
%otherwise. Note that it is primitive if and only if $\beta_n$ is the landing point of precisely two
%rays which are fixed by $f^{p_n}$. In this (and only this) case the period of $\beta_n$ is precisely $p_n$.

Using the Birkhoff Ergodic Theorem and Egorov's theorem, Proposition \ref{lyplus} implies immediately (e1)-(e3) of the next corollary. The proof of (e4)-(e5) is given right after it.
\begin{coro}\label{eee}
For every $\epsilon>0$, there exists a closed set $E'_{\epsilon/2}\subset J_\infty'$
and constants $\rho=\rho(\epsilon)>0$, $\kappa=\kappa(\epsilon)\in (0,1)$ such that:

($e_1$) $\mu(E'_{\epsilon/2})>1-\frac{\epsilon}{2}$,

($e_2$) there exists another closed set $\hat E_{\epsilon/2}$ such that $E'_{\epsilon/2}\subset\hat E_{\epsilon/2}\subset J'_\infty$
as follows. For every $x\in \hat E_{\epsilon/2}$ and every $m>0$
there exists a (univalent) branch
$g_m: B(x, 3\rho)\to {\bf C}$ of $f^{-m}$ such that $g_m(x)=x_{-m}$
and $|g'_m(x_1)/g'_m(x_2)|<2$, for every $x_1, x_2\in B(x,2\rho)$. Moreover,
$m^{-1}\ln|g'_m(x)|\to -\chi(\mu)$ as $m\to \infty$ uniformly in $x\in  E'_{\epsilon/2}$,
%in particular, $m^{-1}\ln|Dg_m(x)|<-\chi(\mu)/2$ for all $m$ large enough,

($e_3$) for every $x\in E'_{\epsilon/2}$ there exists a sequence of positive integers
$n_j=n_j(x)$, $j=1,2,...$, such that $j/n_j\ge \kappa$ and $f^{n_j}(x)\in \hat E_{\epsilon/2}$ for all $j$,
%(in fact, $\{n_j\}_{j=1}^\infty=\{n\in\N: f^n(x)\in\tilde E_{\epsilon/2}\}$),

($e_4$) given $x\in J_\infty$ and $n\ge 0$, let $j_n(x)$ be the unique $1\le j< p_n$
%Let $x\in \cup_{\epsilon>0} E_\epsilon\subset E\subset \tilde E$, where
%$\tilde E=\supp (\mu)\setminus\cup_{j=-\infty}^\infty f^j(K_0)$. Recall that the ray $R_{t(x)}$ lands at $x$.
such that $x\in f^j(J_n)$.
Then $p_n-j_n(x)\to \infty$ as $n\to \infty$ uniformly in $x\in E'_{\epsilon/2}$,

($e_5$) for $s_{n,j_n(x)}=[t_{n,j_n(x)}, t_{n,j_n(x)}']\cup [\tilde t_{n,j_n(x)}', \tilde t_{n,j_n(x)}]$,
we have: $s_{n+1,j_{n+1}(x)}\subset s_{n,j_{n}(x)}$ and
$$|t_{n,j_n(x)}-t_{n,j_n(x)}'|=|\tilde t_{n,j_n(x)}'-\tilde t_{n,j_n(x)}|\to 0$$
as $n\to \infty$ uniformly in $x\in  E'_{\epsilon/2}$.
%In particular,
%$$s_{\infty, x}:=\cap_{n>0}s_{n,j_n(x)}$$
%is not empty and consists of either a single point or two different points.}
\end{coro}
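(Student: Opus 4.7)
The plan is to prove (e1)--(e3) by combining the Pesin-type information in Proposition~\ref{lyplus} with the Koebe distortion theorem and the Birkhoff Ergodic Theorem, applying Egorov's theorem at each step to pass to a closed set of measure close to $1$; to deduce (e4) from a compactness argument using the characterization of $J_\infty'$ in (B); and to deduce (e5) immediately from (e4) together with (\ref{wind}).

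First I would apply Egorov's theorem (combined with inner regularity of $\mu$) to the measurable positive function $\tilde r$ on the full-measure set $J_\infty'$, obtaining a closed subset $\hat E_{\epsilon/2}\subset J_\infty'$ of measure close to $1$ on which $\tilde r$ is uniformly bounded below by some $R>0$. Thus for every $x\in \hat E_{\epsilon/2}$ and every $m\ge 1$, a univalent branch $g_m:B(x,R)\to\C$ of $f^{-m}$ with $g_m(x)=x_{-m}$ exists. Choosing $\rho\in(0,R/3)$ small enough (in comparison to $R$), the Koebe distortion theorem applied to $g_m$ on $B(x,R)$ yields $|g_m'(x_1)/g_m'(x_2)|<2$ for all $x_1,x_2\in B(x,2\rho)\subset B(x,3\rho)\subset B(x,R)$. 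Since $f:J_\infty'\to J_\infty'$ is an invertible $\mu$-preserving transformation (Proposition~\ref{lyplus}(iii)) and $\log|f'|\in L^1(\mu)$ (as $\chi(\mu)<\infty$ and $\mu$ is atomless), the Birkhoff Ergodic Theorem applied to the $f^{-1}$-orbit gives
\[\frac{1}{m}\log|g_m'(x)|\ =\ -\frac{1}{m}\sum_{j=1}^{m}\log|f'(f^{-j}(x))|\ \longrightarrow\ -\chi(\mu)\]
for $\mu$-a.e.\ $x$, and a further Egorov step produces a closed subset on which this convergence is uniform. For (e3), Birkhoff applied to $\mathbf 1_{\hat E_{\epsilon/2}}$ shows that for $\mu$-a.e.\ $x$ the asymptotic frequency of visits of the forward $f$-orbit of $x$ to $\hat E_{\epsilon/2}$ equals $\mu(\hat E_{\epsilon/2})$; Egorov once more yields a closed subset of uniform convergence, giving a common $\kappa\in(0,1)$ and the required sequence $(n_j)$. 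Taking $E'_{\epsilon/2}$ to be the intersection of these three closed subsets, we obtain a closed subset of $\hat E_{\epsilon/2}\subset J_\infty'$ of measure $>1-\epsilon/2$ that verifies (e1)--(e3).

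For (e4) I argue by contradiction: suppose there exist $N\ge 1$, a subsequence of indices $n\to\infty$, and points $x_n\in E'_{\epsilon/2}$ with $p_n-j_n(x_n)\le N$. Since $E'_{\epsilon/2}$ is closed in the compact set $J$, we may pass to a subsequence along which $x_n\to x\in E'_{\epsilon/2}\subset J_\infty'$, and by pigeonhole fix some $k\in\{1,\dots,N\}$ with $p_n-j_n(x_n)=k$, equivalently $f^k(x_n)\in J_n$, for infinitely many $n$. Since $(J_n)_{n\ge 0}$ is a nested decreasing family of closed sets with $\bigcap_n J_n=K_0$, for every fixed $\ell$ one has $f^k(x_n)\in J_\ell$ for all large $n$, and by closedness of $J_\ell$ we conclude $f^k(x)\in J_\ell$; letting $\ell\to\infty$ gives $f^k(x)\in K_0$, contradicting $x\in J_\infty'=J_\infty\setminus\bigcup_{j\in\Z}f^j(K_0)$.

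Finally, (e5) is immediate. The nesting $s_{n+1,j_{n+1}(x)}\subset s_{n,j_n(x)}$ follows from the construction in (C)--(D), since the inclusion $J_{n+1,x}\subset J_{n,x}$ forces the nesting of the associated domains $U_{n+1,\,j_{n+1}(x)-p_{n+1}}\subset U_{n,\,j_n(x)-p_n}$, and hence of the corresponding argument windows. For the lengths, (\ref{wind}) yields
\[|t_{n,j_n(x)}-t_{n,j_n(x)}'|=|\tilde t_{n,j_n(x)}'-\tilde t_{n,j_n(x)}|=\frac{\tilde t_n-t_n}{2^{p_n-j_n(x)+1}}<2^{-(p_n-j_n(x))},\]
which tends to $0$ uniformly in $x\in E'_{\epsilon/2}$ by (e4). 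The main subtlety I foresee is in (e4): one must arrange that $E'_{\epsilon/2}$ is genuinely closed in $\C$ (and not merely measurable or relatively closed in the Borel, generally non-closed, set $J_\infty'$), so that the compactness extraction is legitimate; this is handled by the standard combination of Egorov's theorem with inner regularity of $\mu$, exploiting $\mu(J_\infty')=1$.
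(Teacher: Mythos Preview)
Your proposal is correct and follows essentially the same approach as the paper: the paper simply asserts that (e1)--(e3) follow ``immediately'' from Proposition~\ref{lyplus} via the Birkhoff Ergodic Theorem and Egorov's theorem, and your detailed write-up (Egorov on $\tilde r$ to produce $\hat E_{\epsilon/2}$, Koebe for the distortion bound, Birkhoff plus Egorov for the uniform Lyapunov convergence and for the return-frequency statement in (e3)) is precisely how one fills this in. Your proofs of (e4) and (e5) also match the paper's: the paper argues by contradiction, passes to a subsequence with $p_{n_k}-j_{n_k}(x_k)=N$ fixed (your pigeonhole step), uses closedness of $E'_{\epsilon/2}$ to get a limit $x\in J_\infty'$ with $f^N(x)\in K_0$, and then reads off (e5) from (\ref{wind}) and (e4) exactly as you do.
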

Proof of ($e_4$)-($e_5$): assuming the contrary in ($e_4$), we find some $N$ and sequences $(n_k)\subset\N$ and $(x_k)$, $x_k\in E'_{\epsilon/2}$,
such that $p_{n_k}-j_{n_k}(x_k)=N$ (see (B)) hence, $x_k\in f^{-N}(J_{n_k})$, for all $k$. Since $E_{\epsilon/2}$ is closed, one can assume $x_k\to x\in E'_{\epsilon/2}\subset J'_\infty$.
Hence, $x\in f^{-N}(K_0)$, a contradiction. Now, for ($e_5$)  using ($e_4$),
%\begin{equation}\label{uns}
$t_{n,j_n(x)}' - t_{n,j_n(x)} = \tilde t_{n,j_n(x)} -  \tilde t_{n,j_n(x)}'<\frac{1}{2^{p_{n}-j_n(x)}}\to 0$
%\end{equation}
uniformly in $x$.

\section{External rays to typical points}\label{access}
We define a {\it telescope} following essentially~\cite{Pr}.
Given $x\in J(f)$, $r>0$, $\delta>0$, $k\in {\bf N}$ and $\kappa\in (0,1)$, an
$(r, \kappa, \delta, k)$-telescope at $x\in J$ is collections of times
$0=n_0<n_1<...<n_k$ and disks $B_l=B(f^{n_l}(x), r)$, $l=0,1,...,k$ such that,
for every $l>0$:
(i) $l/n_l>\kappa$, (ii) there is a univalent branch $g_{n_l}: B(f^{n_l}(x), 2r)\to {\bf C}$ of $f^{-n_l}$
so that $g_{n_l}(f^{n_l}(x))=x$ and, for $l=1,...,k$, $\dist(f^{n_{l-1}}\circ g_{n_l}(B_l), \partial B_{l-1})>\delta$
(clearly, here $f^{n_{l-1}}\circ g_{n_l}$ is a branch of $f^{-(n_l-n_{l-1})}$ that maps $f^{n_l}(x)$ to
$f^{n_{l-1}}(x)$).
%$D_t$ be a component of the intersection of
%the basin of infinity with $B(f^{n_t}(x), r)$.
%Here $D_k$ is chosen by us, and other $D_t$, $t=0,1,...,k-1$, are determined accordingly ??.
The trace of the telescope is a collection of sets $B_{l,0}=g_{n_l}(B_l)$, $l=0,1,...,k$.
We have: $B_{k,0}\subset B_{k-1,0}\subset...\subset B_{1,0}\subset B_{0,0}=B_0=B(x,r)$.

By the {\it first point of intersection} of a ray $R_t$, or an arc of $R_t$, with a set $E$ we mean a point of $R_t\cap E$ with
the minimal level (if it exists).
\begin{theorem}\label{tele}\cite{Pr}
Given $r>0$, $\kappa\in (0,1)$, $\delta>0$ and $C>0$ there exist
$M>0$,
$\tilde l,\tilde k\in \N$ and $K>1$
such that for every $(r, \kappa, \delta, k)$-telescope the following hold.
Let $k>\tilde k$. Let $u_0=u$ be any point at the boundary of $B_k$ such that $G(u)\ge C$.
Then there are indexes $1\le l_1<l_2<...<l_j=k$ such that $l_1<\tilde l$,
$l_{i+1}<K l_i$, $i=1,...,j-1$ as follows.
Let $u_k=g_{n_k}(u)\in \partial B_{k,0}$ and
let $\gamma_k$ be an infinite arc of
an external ray through $u_k$
between the point $u_k$ and $\infty$.
%and let $\gamma_k$ be an extension of the arc $g_{n_k}(\gamma)=f^{-n_k}(\gamma)$
%of another external ray, which joints a preimage $u_k=g_k(u)\in \partial B_{k,0}$ with infinity.
Let $u_{k,k}=u_k$ and, for $l=1,...,k-1$,
let $u_{k,l}$ be the first point of intersection of
$\gamma_k$ with $\partial B_{l,0}$.
Then, for $i=1,...,j$,
$$G(u_{k,l_i})>M 2^{-n_{l_i}}.$$
\end{theorem}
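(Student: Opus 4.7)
The strategy is to transfer the estimate at each scale $l$ to a ``unit scale'' statement near $f^{n_l}(x)$ by pushing forward via the univalent branches supplied by the telescope, and then to extract the subsequence $l_1 < l_2 < \ldots < l_j = k$ by a greedy geometric argument. Since $g_{n_l}\colon B(f^{n_l}(x), 2r) \to V_l$ is univalent with image $V_l \supset B_{l,0}$, the map $f^{n_l}\colon V_l \to B(f^{n_l}(x), 2r)$ is univalent. Because $f$ preserves external rays (with $\sigma(t) = 2t$ on angles), the sub-arc of $\gamma_k$ running from $u_k$ to its first crossing $u_{k,l}$ with $\partial B_{l,0}$ lies in $\overline{B_{l,0}} \subset V_l$, and maps under $f^{n_l}$ to an arc of an external ray from $u^{(l)} := f^{n_l}(u_k) \in B_l$ to $y_l := f^{n_l}(u_{k,l}) \in \partial B_l$. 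By the functional equation $G \circ f = 2 G$, the desired inequality $G(u_{k,l_i}) > M \cdot 2^{-n_{l_i}}$ is equivalent to $G(y_{l_i}) > M$.

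Call an index $l$ \emph{good} if $G(y_l) > M$. Fixing $M \in (0, C]$ makes $l = k$ automatically good, since $y_k = u$ and $G(u) \geq C$. The theorem now reduces to a \emph{propagation lemma}: there exist $K > 1$ and $\tilde l \in \N$, depending only on $r$, $\kappa$, $\delta$, $C$ and $f$, such that (a) some index in $[1, \tilde l]$ is good, and (b) any two consecutive good indices $g < g'$ satisfy $g' \leq K g$. Granting this, list all good indices up through $k$ as $g_1 < g_2 < \ldots < g_m = k$ and set $l_i := g_i$; (a) gives $l_1 \leq \tilde l$ and (b) gives $l_{i+1} \leq K l_i$.

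The heart of the matter is the propagation lemma. Its proof rests on the Koebe distortion theorem applied to each branch $g_{n_l}$ on $B(f^{n_l}(x), r) \subset B(f^{n_l}(x), 2r)$, combined with the margin condition $\dist(f^{n_{l-1}} g_{n_l}(B_l), \partial B_{l-1}) > \delta$. Together these yield uniform control on the shape of the $B_{l,0}$ and on ratios of consecutive derivatives $|g'_{n_l}|/|g'_{n_{l-1}}|$. The main obstacle, which I expect to dominate the technical work, is the multiplicative gap bound (b): arguing by contradiction, suppose every $l \in (l_0, K l_0]$ is bad, so the arc of the external ray from $u^{(l)}$ to $y_l$ stays in the sub-level set $\{G < M\}$ inside $B_l$, i.e.\ it exits $B_l$ tangentially to a level set of $G$. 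Telescoping this tangential behavior across a geometric block of iterate-count length at most $l_0/\kappa$ (from $l/n_l \geq \kappa$) should produce an accumulated conformal distortion of the composition of the intervening pullback branches that violates the Koebe bounds available from the margin $\delta$. The delicate bookkeeping is to convert the fine combinatorics by which the single ray $\gamma_k$ navigates the nested family $\{B_{l,0}\}$ into the coarse doubling growth $G \circ f = 2G$, forcing at least one transverse exit per geometric block.
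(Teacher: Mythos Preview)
The paper does not prove Theorem~\ref{tele}; it is quoted verbatim from \cite{Pr} and used as a black box. So there is no ``paper's own proof'' to compare against here.

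As for your sketch on its own merits: the reduction you do in the first two paragraphs is correct and is indeed the right normalization. Using $G\circ f^{n_l}=2^{n_l}G$ to rewrite $G(u_{k,l})>M\,2^{-n_l}$ as $G(y_l)>M$ with $y_l=f^{n_l}(u_{k,l})\in\partial B_l$, and then reducing to a gap bound on the set of ``good'' indices, matches the architecture of Przytycki's original argument.

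The gap is your last paragraph. You acknowledge that the propagation lemma is ``the heart of the matter,'' but what you write there is not a proof: saying the accumulated distortion ``should'' violate Koebe is a hope, not an argument, and the phrase ``exits $B_l$ tangentially to a level set of $G$'' does not lead anywhere by itself. The actual mechanism in \cite{Pr} is a length estimate in the hyperbolic metric of the basin $A(\infty)$: an arc of an external ray lying entirely in a bounded sub-level set $\{G\le M\}$ and crossing the annulus $B_{l-1}\setminus f^{n_{l-1}}g_{n_l}(B_l)$ has hyperbolic length bounded \emph{below} by a constant depending only on the margin $\delta$ (via Koebe), while the total hyperbolic length of the portion of any external ray in $\{G\le M\}$ is bounded \emph{above} by a constant depending only on $M$. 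Comparing these two bounds forces an upper bound on the number of consecutive bad indices, which together with $l/n_l\ge\kappa$ gives the multiplicative gap bound $l_{i+1}<Kl_i$ and the initial bound $l_1<\tilde l$. Your outline never invokes the hyperbolic metric of $A(\infty)$ or any substitute for it, and without that ingredient the contradiction you are reaching for does not close.
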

%{\bf (F)}.
%Proposition~\ref{lyplus} and Theorem~\ref{thm:meas+}(1) are easy corollary of (A)-(F). Here is another one:
Next corollary of Theorem \ref{tele} is a key one.
\begin{prop}\label{acc}
Given $\epsilon>0$ there exists a closed set $E_\epsilon$ as follows. First, $\mu(E_\epsilon)>1-\epsilon$ and
$E_\epsilon\subset E'_{\epsilon/2}$ where $E'_{\epsilon/2}$ is the set defined
in (E) and satisfies ($e_1$)-($e_5$).
%In particular, ($e_2$)-($e_5$) hold for $x\in E_\epsilon$.
There exists $r=r(\epsilon)>0$ and, for each $\nu>0$ there is $C(\nu)>0$ as follows.

(1) Let $x\in E_\epsilon$. Then $x$ is the landing point of an external ray $R_{t(x)}$ of argument $t(x)$.
Moreover, the first intersection of $R_{t(x)}$ with $\partial B(x,\nu)$ has the level at least $C(\nu)$.

(2) for each $n$, a branch $g_n: B(x,2r)\to {\bf C}$ of $f^{-n}$ is well-defined such that
$g_n(x)=x_{-n}$,
$|g'_n(x_1)/g'_n(x_2)|<2$, for every $x_1, x_2\in B(x,r)$ and
$n^{-1}\ln|g'_n(x)|\to -\chi(\mu)$ as $m\to \infty$ uniformly in $x\in  E_\epsilon$,
%in particular, $n^{-1}\ln|Dg_n(x)|<-\chi(\mu)/2$ for all $n$ large enough

(3) if $x'=g_n(x)\in E_\epsilon$, then $f^n(R_{t(x')})=R_{t(x)}$.
%(4) given $\sigma>0$ there is $\delta>0$ such that $|x_1-x_2|<\sigma$ for some $x_1,x_2\in E_\epsilon$ whenever $|t(x_1)-t(x_2)|<\delta$.
\end{prop}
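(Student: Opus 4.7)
The plan is to build, for every $x\in E_\epsilon$, telescopes of arbitrarily large depth from the forward orbit of $x$ using Corollary~\ref{eee}, apply Theorem~\ref{tele} to produce a sequence of external rays passing near~$x$, and pass to a subsequential limit to define $R_{t(x)}$; the equivariance~(3) will then be automatic from the functorial nature of the construction. Set $E_\epsilon:=E'_{\epsilon/2}$ and fix $r=r(\epsilon)<\rho(\epsilon)/3$. For $x\in E_\epsilon$ and the return times $n_j=n_j(x)$ from~$(e_3)$, property~$(e_2)$ provides univalent branches $g_{n_j}$ of $f^{-n_j}$ on $B(f^{n_j}(x),3\rho)\supset B(f^{n_j}(x),2r)$ with derivative distortion at most~$2$. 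To meet the telescope $\delta$-margin condition I would thin $\{n_j\}$ to a subsequence $\{n_l\}$ with gaps $n_l-n_{l-1}$ large enough that $|(f^{n_l-n_{l-1}})'(f^{n_{l-1}}(x))|>3$; this is feasible uniformly on $E_\epsilon$ thanks to the uniform decay $|g'_n(x)|\asymp e^{-n\chi(\mu)}$ from~$(e_2)$, and bounded distortion then forces $f^{n_{l-1}}\!\circ g_{n_l}(B_l)\subset B(f^{n_{l-1}}(x),r/2)$, i.e., an $(r,\kappa',\delta,k)$-telescope with $\delta=r/2$ for every~$k$ (with a slightly smaller density $\kappa'$).

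Next I would apply Theorem~\ref{tele} to this telescope, starting from any boundary point $u_0\in\partial B_k$ with $G(u_0)=C$ for a fixed constant~$C$. The theorem outputs an external ray $R_{t_k(x)}$ through $u_k=g_{n_k}(u_0)\in\partial B_{k,0}$, together with indices $l_1<\cdots<l_j=k$ satisfying $l_{i+1}<K l_i$ on which the first intersection of the ray arc with $\partial B_{l_i,0}$ has Green level at least $M\,2^{-n_{l_i}}$. Bounded distortion gives $\diam B_{k,0}\le 4r|g'_{n_k}(x)|\to 0$, so the traces shrink to~$x$, and any subsequential limit $t(x)$ of $\{t_k(x)\}$ produces a ray $R_{t(x)}$ landing at~$x$. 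For a prescribed $\nu>0$, the index $l_i$ with $\diam B_{l_i,0}\sim\nu$ satisfies $n_{l_i}\le N(\nu)$ uniformly on $E_\epsilon$ (because both the geometric growth $l_{i+1}<K l_i$ and the uniform Lyapunov shrinking rate are controlled by $\epsilon$ alone), so the level bound $M\,2^{-n_{l_i}}\ge C(\nu)>0$ transfers to the first intersection of $R_{t(x)}$ with $\partial B(x,\nu)$, proving~(1). Assertion~(2) is a direct restatement of~$(e_2)$ for $r<\rho$.

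Finally, for~(3), if $x'=g_n(x)\in E_\epsilon$, then the shifted times $n+n_j(x)$ are valid telescope times for~$x'$ because $f^{n+n_j(x)}(x')=f^{n_j(x)}(x)\in\hat E_{\epsilon/2}$, and the corresponding trace point is $u_k(x')=g_n(u_k(x))$. Since $g_n$ is univalent on a neighborhood of $u_k(x)$, the map $f^n$ sends the external ray $R_{t_k(x')}$ through $u_k(x')$ to an external ray through $u_k(x)\in A(\infty)$, which has to be $R_{t_k(x)}$; hence $2^n t_k(x')\equiv t_k(x)\pmod 1$ for every~$k$. Extracting a subsequence $k_i$ along which both $t_{k_i}(x)\to t(x)$ and $t_{k_i}(x')\to t(x')$ (possible since $t_k(x')\in\sigma^{-n}(t_k(x))$ is a finite fiber for each~$k$) gives $\sigma^n(t(x'))=t(x)$, hence $f^n(R_{t(x')})=R_{t(x)}$. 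The main obstacle I foresee is the simultaneous control required in the first paragraph: the thinning needed to enforce the $\delta$-margin must preserve a uniform density of good times, and the resulting telescope parameters must be uniform over $E_\epsilon$ to guarantee that $C(\nu)$ depends only on $\nu$ and $\epsilon$, not on~$x$. These are standard Pesin-theoretic manipulations but require attentive bookkeeping of constants.
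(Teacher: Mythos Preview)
Your treatment of (1) and (2) is essentially the paper's argument: build telescopes from $(e_2)$--$(e_3)$, apply Theorem~\ref{tele}, and pass to a subsequential limit. The problem is~(3), and it is not a bookkeeping issue.

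You set $E_\epsilon:=E'_{\epsilon/2}$ and define $t(x)$ for each $x$ as a subsequential limit of $t_k(x)$ along a sequence depending on $x$. Then for $x'=g_n(x)$ you argue via the \emph{shifted} telescope with times $n+n_j(x)$. But this shifted telescope is not the one used to define $t(x')$; the argument $t(x')$ was already fixed by the original construction applied to $x'$ with times $n_j(x')$. Your extraction ``along which both $t_{k_i}(x)\to t(x)$ and $t_{k_i}(x')\to t(x')$'' only shows that \emph{some} subsequential limit $t'$ of the shifted rays satisfies $\sigma^n(t')=t(x)$ and lands at $x'$; it does not show $t'=t(x')$. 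Since each point of $J'_\infty$ can have two distinct external arguments (see~(D)), the non-canonical subsequence choice may select the ``other'' argument at $x'$, and equivariance fails. The construction is not functorial in the way you claim.

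The paper's fix is exactly to confront this: it first produces a preliminary $\tilde t(x)$ on $E'_{\epsilon/2}$, then \emph{shrinks} to a closed $E_\epsilon\subset E'_{\epsilon/2}$ (with $\mu(E_\epsilon)>1-\epsilon$) on which the forward orbit hits $E'_{\epsilon/2}$ infinitely often. Because each point has at most two external arguments, the angles $\tilde t(f^N(x))$ lie in at most two $\sigma$-orbits, so one orbit is hit infinitely often; $t(x)$ is redefined to lie in that orbit, and the telescope procedure is rerun with those specific return times so that property~(1) (the uniform level bound $C(\nu)$) is preserved. The remark following the paper's proof makes the point explicitly: one cannot simply declare $t(f^n(x)):=\sigma^n(t(x))$, because doing so may destroy the telescope density $\kappa$ and hence~(1). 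Your argument needs this correction step (and hence the passage to a smaller $E_\epsilon$) to close the gap.
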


\begin{proof} (1)-(2) will hold already for the set $E'_{\epsilon/2}$ which follows from Theorem \ref{tele} as in \cite{Pr} and uses only that $\mu$ has a positive exponent;
(3) will follow in our case as we shrink a bit the set $E'_{\epsilon/2}$ since each point $x\in J_\infty'$ admits at most two external arguments.

Here are details. Let $r=\rho(\epsilon)$ and $\kappa=\kappa(\epsilon)$ as in the properties ($e_2$)-($e_3$) of the set $E'_{\epsilon/2}$.
Then, by ($e_2$)-($e_3$), there is $\delta>0$ such that, for each $k$, every $x\in E'_{\epsilon/2}$ admits
$(r, \kappa, \delta, k)$-telescope with the times $0=n_0<n_1<n_2<...<n_k$
that appear in the property ($e_3$) of $E'_{\epsilon/2}$.
On the other hand, there exists $L_r>0$ such that
for every $z\in J(f)$ there is a point $u(z)\in\partial B(z,r)$ with the level $G(u(z))>L_r$.
This is so due to $\bigcup_{L>0}\{G(z)\ge L\}= A_\infty$.

\noindent Given this $C=L_r$, let $M$, $\tilde l$ and $\tilde k$ be as in Theorem \ref{tele}.

Let $x\in E'_{\epsilon/2}$ and $n_1<n_2<...<n_k<...$ as in ($e_3$). Fix $k>\tilde k$.
Let $B_{k,0}(x)\subset B_{k-1,0}(x)\subset\cdots\subset B_{1,0}(x)\subset B_{0,0}(x)$ be the corresponding trace.
%Define $C_0=\inf_{y\in J(f)} \max_{z\in B(y, r)} G(z)$. It is easy to see that $C_0>0$.
%The constant $C_0$ and the parameters $r$, $\delta$,
%$\tilde n\in {\bf N}$,
%$\kappa\in (0,1)$ as well as segments of integers $(n_i)_{i=1}^k$ which are introduced above are independent on $x\in X$.
%For each $x\in X$ we choose a point $u(x)\in \partial B(x,r)$ such that $G(u(x))\ge C_0$.
%Let $\gamma(x)$ be an arc of an external ray through $u(x)$ which joins $x$ to $\infty$.
By Theorem~\ref{tele}, there are
%$\tilde l$ and $\tilde k$ such that for each $k>\tilde k$ and each $x\in X$ the following hold. There are
$1\le l_{1,k}(x)<l_{2,k}(x)<\cdots<l_{j^x_k,k}(x)=k$ such that
$l_{1,k}(x)<\tilde l$, $l_{i+1,k}(x)<K l_{i,k}(x)$, $i=1,\cdots,j^x_k-1$.
%(in particular, $l_{i,k}(x)<2^{i}\tilde l$)
Let $\gamma_k(x)$ be an arc of an external ray between the point $u_k(x):=g_{n_k}(u(f^{n_k}(x))$ and $\infty$.
Let $u_{k,l}(x)$ be the first intersection of $\gamma_k(x)$ with
$\partial B_{l,0}(x)$. Then, for $i=1,\cdots,j^x_k-1$,
\begin{equation}\label{level}
G(u_{k,l_{i,k}}(x))>M 2^{-n_{l_{i,k}(x)}}>M 2^{-l_{i,k}(x)/\kappa}.
\end{equation}
For all $i=1,\cdots,j^x_k-1$,
\begin{equation}\label{acc1}
i\le l_{i,k}(x)<K^i \tilde l.
\end{equation}
%Recall that $B_{l,0}(x)=g_{l,x}(B(f^{lj_0}(x),r)$, hence, there is $\Lambda>1$ so that $B(x,\Lambda^{-l})\subset B_{l,0}(x)\subset B(x,\lambda^{-l})$ for all $l$ and $x$.
Denote by $t_k(x)$ the argument of an external ray that contains the arc $\gamma_k(x)$.

Now, given a sequence
\begin{equation}\label{kj}
k_1<k_2<...<k_m<...
\end{equation}
such that $k_1>\tilde k$,
we get a sequence of arguments $t_{k_m}(x)$ and a sequence
of arcs $\gamma_{k_m}(x)$ of external rays of the corresponding arguments $t_{k_m}(x)$.
Passing to a subsequence in the sequence $(k_m)$, if necessary, one can assume that $t_{k_m}(x)\to \tilde t(x)$, for some argument $\tilde t(x)$.

Fix any $\nu\in (0,r)$ and choose $\tilde k_0>\tilde k$ such that,
$$2\exp(-K^{\tilde k_0-2}\tilde l \chi(\mu))<\nu \mbox { and let } C(\nu)=M (2^{-1/\kappa})^{\tilde l K^{\tilde k_0}}.$$
%Fix $l_0$ so that $\lambda^{-l_0} r<\mu/2$ and let
%\begin{equation}\label{acc2}
%C(\mu)=\frac{M}{2^{K^{l_0} \tilde l j_0}}.
%\end{equation}
%There is $m_0$ such that for all $m>m_0$ and all $l>l_0$,
%$B_{l_0,0}(x_m)\subset B(y,\mu)$.
Then, by Theorem \ref{tele}, for each $k_m>k_0$, the first intersection of the ray $R_{t_{k_m}}(x)$
with the boundary of $B(x,\nu)$ has the level at least $C(\nu)$. It follows, for any $0<C<C(\nu)$, the sequence of arcs
of the rays $R_{t_{k_m}(x)}$ between the levels $C$ and $C(\nu)$ do not exit $B(x,\nu)$ for all $k_m>k_0$.
As $t_{k_m}(x)\to \tilde t(x)$, it follows that the arc of the ray $R_{\tilde t(x)}$ between levels $C$ and $C(\nu)$ stays in
$B(x,\nu)$ too.
As $\nu>0$
and $C\in (0, C(\nu))$ can be chosen arbitrary small, $R_{\tilde t(x)}$ must land at $x$ and satisfy (1) with $t(x)$ replaced by $\tilde t(x)$.

Let us call the above procedure of getting $\tilde t(x)$ from the constants $r$, $L_r$, the point $x\in E'_{\epsilon/2}$ and the sequence (\ref{kj}) the $(r, L_r, x, (k_m))$-{\it procedure}.

Note that (2) is property ($e_2$) of the set $E'_{\epsilon/2}$.

{\it In order to satisfy property (3), we shrink the set $E'_{\epsilon/2}$ and correct $\tilde t(x)$ changing it to some $t(x)$ (if necessary)} as follows.
Using the Birkhoff Ergodic Theorem and Egorov's theorem, choose a closed subset $E_\epsilon$ of $E'_{\epsilon/2}$ such that $\mu(E_\epsilon)>1-\epsilon$
and, for each $x\in E_\epsilon$, the set $\mathcal{N}(x):=\{N\in\N: f^N(x)\in E'_{\epsilon/2}\}$ is infinite.
Note that $\mathcal{N}(x)\subset\{n_k\}_{k=1}^\infty$.
We have proved that, for each $N\in\mathcal{N}(x)$, (1) holds for the point $f^N(x)$ instead of $x$,
in particular, $\tilde t(f^N(x))$ is an argument of $f^N(x)$.
On the other hand, by (D1), each $y\in E_\epsilon$ admits at most two external arguments, hence,
all possible external arguments of the forward orbit $f^n(x)$, $n\ge 0$, belong to at most
two different orbits of $\sigma: S^1\to S^1$. Hence,
there is one of those orbits, $O=\{\sigma^n(t(x))\}_{n\ge 0}$ for some $t(x)$, such that the intersection
$O\cap\{\tilde t(f^N(x)): N\in\mathcal{N}(x)\}$ is an infinite set, so that
$\tilde t(f^{n_{k_m(x)}}(x))=\sigma^{n_{k_m(x)}}(t(x))$ for an infinite sequence $(k_m(x))_{m\ge 1}$.

Let's start over with the $(r/2, C(r/2), x, (k_m(x)))$-procedure for the point $x$ and the sequence $\{k_j(x)\}$.
Then, by the construction, $t_{k_m(x)}=t(x)$ for all $m$, hence, (1) holds with $t(x)$ instead of the previous $\tilde t(x)$.
If $y\in E_\epsilon$ is any other point of the grand orbit $\{f^n(x): n\in\Z\}$ (remember that $f:J'_\infty\to J'_\infty$ is invertible), the $(r/2, C(r/2), y, (k_m))$-procedure works for $y$
with the same (perhaps, truncated) sequence $k_1(x)<k_2(x)<...$, which ensures that
(3) holds (for the corrected arguments) too.
\end{proof}
\begin{remark} Given $t(x)$, we cannot just set $t(f^n(x))=\sigma^n (t(x))$ to satisfy property (3)
because this would change $\kappa$ in the definition of telescope, so we might loose property (1).
%???????????? because of lack of (Fi)  (crucial for small $\ell$'s)?????????????????
Notice that correcting (flipping) $\tilde t(x)$ to $t(x)$ does not change $C(\nu)$
%??????????????which depend on $x$ but not on external rays landing at it.?????????????
The same for flipping any $t(y)$ in the grand orbit of $x$.
But the flipping can make $f^\ell(R_{t(y)})=R_{t(f^N(x)}$ for $f^\ell(y)=f^N(x)$ where $N=n_{k_m}$ with
$G(R_{t(f^\ell(y))}\cap \partial B(f^\ell(y),r/2)>L_{r/2}$, thus yielding (3).
\end{remark}

\section{Lemmas}
Recall that for any $z\in J'_\infty$ we define $z_m=f^m(z)$ for any $m\in\Z$. This makes sense since $f$ is invertible on $J'_\infty$, see (E).
\begin{lemma}\label{p0}
Let $z(k)\in \cup_{j=0}^{p_{n_k}-1}f^{j}(J_{n_k})$ where $n_k\nearrow\infty$.

{\bf (a)} If $z(k)\to z$ then $z\in J_\infty$.

{\bf (b)} $z\in J_{n,x}\cap J'_\infty$ yields $z_{\pm p_n}\in J_{n,x}$.
If, additionally to (a), $z(k)\in J'_\infty$ for all $k$
%(where the set $J'_\infty$ is as Proposition \ref{lyplus})
and $w(k)\to w$ where $w(k)=z(k)_{e p_{n_k}}$, where $e$ is always either $1$ or $-1$ then $z$ and $w$ are in the same component of $J_\infty$.

{\bf (c)}
%Given $\sigma>0$ there is $\delta(\sigma)>0$ such that $|x_1-x_2|<\sigma$ for some $x_1,x_2\in E_\epsilon$ whenever $|t(x_1)-t(x_2)|<\delta(\sigma)$.
If $z(k)\in E_\epsilon$ for all $k$ and $t(z(k))\to t$ (where $E_\epsilon$, $t(z(k))$ are defined in Proposition \ref{acc}), then the ray
$R_t$ lands at the limit point $z$. In particular, given $\sigma>0$ there is $\Delta(\sigma)>0$ such that $|x_1-x_2|<\sigma$ for some $x_1,x_2\in E_\epsilon$ whenever $|t(x_1)-t(x_2)|<\Delta(\sigma)$.
\end{lemma}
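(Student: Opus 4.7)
The plan is to handle the three parts in order, relying on the structure of simple renormalizations from (B) for (a) and (b), and on Proposition~\ref{acc} for (c).

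For (a), I would use that each $\mathrm{orb}(J_n)=\bigcup_{j=0}^{p_n-1} f^j(J_n)$ is closed and that these sets form a decreasing sequence in $n$. Thus for any fixed $N$ and all $k$ with $n_k\ge N$ we have $z(k)\in \mathrm{orb}(J_N)$; closedness gives $z\in \mathrm{orb}(J_N)$, and intersecting over $N$ yields $z\in J_\infty$.

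For (b) the engine is the invariance claim $z_{\pm p_n}\in J_{n,x}$. The forward direction is immediate, since $f^{p_n}$ preserves every small Julia set of level $n$. For $z_{-p_n}$ I would use that $f:J'_\infty\to J'_\infty$ is a homeomorphism and that $f$ shifts the label $j_n(\cdot)$ by $+1\bmod p_n$, so iterating $f^{-1}$ exactly $p_n$ times restores the original label, placing $z_{-p_n}$ back into $J_{n,x}$. Applying this invariance, both $z(k)$ and $w(k)$ lie in $J_{n_k,z(k)}\subset J_{m,z(k)}$ whenever $n_k\ge m$. Since $z\in J_\infty$ is non-periodic and distinct small Julia sets of the same level meet only at periodic points, there is a neighbourhood of $z$ whose intersection with $J_\infty$ lies in the single set $J_{m,z}$. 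Hence for large $k$ one has $J_{m,z(k)}=J_{m,z}$ and $w(k)\in J_{m,z}$; passing to the limit in this closed set gives $w\in J_{m,z}$ for every $m$, i.e.\ $w\in K_z$.

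For (c), I would apply Proposition~\ref{acc}(1) at $x=z(k)$ and use the conclusion already isolated inside its proof, namely that the tail of $R_{t(z(k))}$ at levels in $(0,C(\nu))$ sits inside $\overline{B(z(k),\nu)}$ (rather than merely its first excursion). For large $k$ this tail therefore lies in $\overline{B(z,2\nu)}$; by continuous dependence of external rays on the argument at any level bounded away from $0$, $R_{t(z(k))}(\ell)\to R_t(\ell)$ at each fixed $\ell\in(0,C(\nu))$, so $R_t(\ell)\in\overline{B(z,2\nu)}$ as well. Letting $\nu\to 0$ forces $R_t$ to land at $z$. The ``In particular'' clause then follows by compactness of $E_\epsilon$ and uniqueness of the landing point of a given external ray: otherwise convergent subsequences in $E_\epsilon$ with angular distances tending to $0$ would make a single ray land at two separated points. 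The main technical point I expect to be the real obstacle is isolating this stronger ``tail stays in the small ball'' form of Proposition~\ref{acc}(1); once this is in hand, the rest is routine bookkeeping.
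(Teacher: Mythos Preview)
Your proof is correct. Parts (a) and (c) follow essentially the paper's approach. For (c), your concern about extracting a ``stronger form'' of Proposition~\ref{acc}(1) is unwarranted: the paper defines the \emph{first intersection} of a ray with a set to be the point of minimal Green level, so Proposition~\ref{acc}(1) already says precisely that the arc of $R_{t(x)}$ at levels below $C(\nu)$ stays inside $B(x,\nu)$---exactly the tail containment you need, with no extra work.

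For (b) your route differs from the paper's. The paper observes that $z(k)$ and $w(k)$ both lie in the connected compact set $f^{j_k}(J_{n_k})$, and that any Hausdorff subsequential limit of these sets is connected and (by part (a)) contained in $J_\infty$; since such a limit contains both $z$ and $w$, they lie in the same component of $J_\infty$. You instead fix a level $m$, use that distinct small Julia sets of level $m$ meet only at periodic points (which $z\in J_\infty$ avoids) to force $J_{m,z(k)}=J_{m,z}$ for large $k$, and conclude $w\in J_{m,z}$ for every $m$, i.e.\ $w\in K_z$. Both arguments are short; the paper's is purely topological and never names the labels $j_m$, while yours is more combinatorial and yields the explicit conclusion $w\in K_z$ rather than merely ``same component''.
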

\begin{proof}
(a) Assume the contrary. Then there is $n$ such that $d:=\dist(z, \cup_{j=0}^{p_{n}-1}f^j(J_{n}))>0$.
As, for any $n_k\ge n$, $z(k)\in \cup_{j=0}^{p_{n_k}-1}f^j(J_{n_k})$ where the latter union is a subset of $\cup_{j=0}^{p_{n}-1}f^j(J_{n})$,
the distance between $z$ and $z_k$ is at least $d$, a contradiction.

(b) $z_{\pm p_n}\in J_{n,x}$ by combinatorics and definitions of points $z_{m}$. In particular, for every $k$, $z(k)$ and $w(k)$ are in the same component $f^{j_k}(J_{n_k})$
of $\cup_{j=0}^{p_{n_k}-1}f^{j}(J_{n_k})$.
By (a), any limit set $A$ of the sequence of compacts $(f^{j_k}(J_{n_k}))$ in the Hausdorff metric is a subset of $J_\infty$.
On the other hand, $A$ is connected as each set $f^{j_k}(J_{n_k})$ is connected. This proves (b).

(c) We prove only the first claim as the second one directly follows from it. Fix any $\nu\in (0,r)$ and choose $k_0$ such that for any $k>k_0$,
$B(z(k), \nu)\subset B(z,11/10\nu)$.
%Let $C(\rho)=M (2^{-1/\kappa})^{\tilde l 2^{\tilde k_0}}$.
%Fix $l_0$ so that $\lambda^{-l_0} r<\mu/2$ and let
%\begin{equation}\label{acc2}
%C(\mu)=\frac{M}{2^{K^{l_0} \tilde l j_0}}.
%\end{equation}
%There is $m_0$ such that for all $m>m_0$ and all $l>l_0$,
%$B_{l_0,0}(x_m)\subset B(y,\mu)$.
Then, by Proposition \ref{acc}, part (1), for each $k>k_0$, the first intersection of the ray $R_{t(z_k)}$
with the boundary of $B(z,(11/10) \nu)$ has the level at least $C(\nu)$.
 %%% $\tilde C(\nu):=C(11/10\nu)$.
It follows, for any $0<C<C(\nu)$, the sequence of arcs
of the rays $R_{t_{z_k}}$ between the levels $C$ and $ C(\nu)$ do not exit $B(z,(11/10) \nu)$ for all $k>k_0$.
As $\nu>0$
and $C\in (0, \tilde C(\nu))$ can be chosen arbitrary small, $R_t$ must land at $z$.
%The proof of (c) repeats the proof of Proposition \ref{acc}, part (1).
%In fact, this follows from it.
%Namely,
%using notations of Proposition \ref{acc},
%fix any $\nu\in (0,r)$ and choose $k_0$ such that for any $k>k_0$,
%$B(z_k, \nu)\subset B(z,11/10\nu)$.
%Let $C(\rho)=M (2^{-1/\kappa})^{\tilde l 2^{\tilde k_0}}$.
%Fix $l_0$ so that $\lambda^{-l_0} r<\mu/2$ and let
%\begin{equation}\label{acc2}
%C(\mu)=\frac{M}{2^{K^{l_0} \tilde l j_0}}.
%\end{equation}
%There is $m_0$ such that for all $m>m_0$ and all $l>l_0$,
%$B_{l_0,0}(x_m)\subset B(y,\mu)$.
%Then, by Proposition \ref{acc}, for each $k>k_0$, the first intersection of the ray $R_{t(z_k)}$
%with the boundary of $B(z,\nu)$ has the level at least $\tilde C(\nu):=C(11/10\nu)$. It follows, for any $0<C<\tilde C(\nu)$, the sequence of arcs
%of the rays $R_{t_{z_k}}$ between the levels $C$ and $\tilde C(\nu)$ do not exit $B(z,\nu)$ for all $k>k_0$. As $\nu>0$
%and $C\in (0, \tilde C(\nu))$ can be chosen arbitrary small, $R_t$ must land at $z$.
\end{proof}
%{\bf (G)}.
%Let us introduce another notations.

%intersection $u_2$ with $R_{\hat t_2}$ and then back along $R_{\hat t_2}$ from $u_2$ to $x_2$ belongs to
%$\Omega_n$ and $B(x_1,\hat r)$ simultaneously.
%\end{proof}

By lemma \ref{p0}(c), if arguments $t(x),t(x')$ of $x,x'\in E_\epsilon$ are close then $x,x'$ are close as well.

\begin{definition}\label{delta} Given $\epsilon$ and $\rho$ we define $\delta$ as follows.
First, for $\hat r\in (0,1)$ and $\hat C>0$, we define
$\hat\delta=\hat\delta(\hat r,\hat C)>0$.
%and $\tilde n\in {\bf N}$
Namely, let $C_0>0$ be so that the distance between the equipotential of level $C_0$ and $J(f)$ is bigger than $1$. Then $\hat\delta=\hat\delta(\hat r/2,\hat C)>0$ is such that for any $C\in [\hat C, C_0]$,
%and any two points $w_1,w_2\in A(\infty)$,
if $w_1,w_2$ lie on the same equipotential $\Gamma$ of level $C$ and the difference between external arguments of $w_1,w_2$ is less than
$\hat\delta$ then the length of the shortest arc of the equipotential $\Gamma$ between $w_1$ and $w_2$ is less than $\hat r/2$.
Apply Lemma \ref{p0}(c)
with $\sigma=\rho/4$ and find the corresponding $\Delta(\rho/4)$.
%Let $\tilde C=C(\rho/2)$ where $C(\nu)$ is defined in Proposition \ref{acc}.
%Apply Claim $\tilde 1$ with $\hat r=\rho$ and $\hat C=\tilde C$ and find the corresponding $\hat\delta$.
Let
$$\delta=\delta(\epsilon, \rho):=\min\{\hat\delta(\rho, C(\rho/2)), \Delta(\frac{\rho}{4})\}$$
where $C(\nu)$ is defined in Proposition \ref{acc}.
\end{definition}

In the next two lemmas we construct curves with special properties. The idea is as follows.
Let $x\in E_\epsilon\cap J_{n,x}$. Then $x_{-p_n}\in J_{n,x}$. It is easy to get a curve $\gamma$ in $A(\infty)$ as follows:
begin with an arc from a point $b\in R_{t(x)}$
to $g_{p_n}(b)$ and then iterate this arc by $g_{p_n}$. In this way we get a curve $\gamma$ such that $g^{p_n}(\gamma)\subset \gamma$, hence,  $\gamma$ lands at a fixed point $a$ of $f^{p_n}$.
We show in the next lemma (in a more general setting) that if both points $x,x_{-p_n}$ are either in the range of the covering (\ref{mapU}) (condition (I)) or in the range of the covering (\ref{mapOmega})
(condition (II)) then
$a\in J_{n,x}$. This implies that $a$ has to be the $\beta$-fixed point of $f^{p_n}: J_{n,x}\to J_{n,x}$. In Lemma \ref{p1} assuming additionally that $f^{p_n}$ is satellite, we 'rotate' the curve $\gamma$
by $g_{p_{n-1}}$ to put the set $J_{n,x}$ in a 'sector' bounded by $\gamma$ and by its 'rotation'. In Lemma \ref{p3}-\ref{doublem} we consider the case of doubling for which the condition (II) usually does not hold.

\smallskip

In what follows, we use the following notation: given $p,q\in\N$ , let
$$E_{\epsilon,p,q}=\cap_{j=0}^{q-1} f^{jp}(E_\epsilon).$$
It is a closed subset of $E_\epsilon$ of points $x$ such that $x_{-jp}\in E_\epsilon$ for $j=0,1,\cdots,q-1$.
As $f: J'_\infty\to J'_\infty$ is a $\mu$-automorphism, $\mu(E_{\epsilon,p,q})>1-q\epsilon$. Notice that this bound is independent of $p$.

\smallskip

For every $n>0$
consider the closed set $E_{\epsilon, p_n, q}$.
%Under the conditions of Lemma~\ref{key}, there is $\delta>0$ as follows.
Let $x\in E_{\epsilon, p_n, q}$. Denote for brevity
$$x^k:=x_{-kp_n}  \mbox{ and }  R^k:=R_{t(x^k)}, \ \ k=0,1,...,q-1.$$
By Lemma \ref{p0}(b), $x^k\in J_{n,x}$. Hence,
$t(x^k)\in s_{n, j_n(x)}\subset S_{n,j_n(x)}$, $0\le k\le q-1$.

Recall that for a semi-open curve $l: [0,1)\to \C$, we say that $l$ lands at, or tends to, or converges to a point $z\in\C$ if there exists $\lim_{t\to1}l(t)=z$. Then $l(0)$, $z$ are endpoints of the curve and $l(0)$ is called also the starting point of $l$.

\begin{lemma}\label{fixed}
Fix $\epsilon>0$ and consider the set $E_\epsilon$ with the corresponding constant
$r(\epsilon)>0$.
%$\kappa(\epsilon)$,$\tilde l(\epsilon)$ and $M(\epsilon)>0$.
Fix $\rho\in (0,r(\epsilon)/3)$. Let $\delta:=\delta(\epsilon, \rho)$ from Definition \ref{delta}. For every $q\ge 2$ there exist $\tilde n$, $\tilde C$
%and $\delta=\delta(\epsilon, \rho)>0$
as follows.
%$\tilde C_1$ ($0<\tilde C_1<\tilde C$)
%such that

\iffalse

%Under the conditions of Lemma~\ref{key}, there is $\delta>0$ as follows.
Let $x\in E_{\epsilon, p_n, q}$. Denote for brevity
$$x^k:=x_{-kp_n}  \mbox{ and }  R^k:=R_{t(x^k)}, \ \ k=0,1,...,q-1.$$
By Lemma \ref{p0}(b), $x^k\in J_{n,x}$. Hence,
$t(x^k)\in s_{n, j_n(x)}\subset S_{n,j_n(x)}$, $0\le k\le q-1$.

\fi

For every $n>\tilde n$ consider the closed set $E_{\epsilon, p_n, q}$.
Fix $0\le i<j\le q-1$.
%Assume that
%for some $i=1,...,q$, the distance (on ${\bf S^1}$) between $t(x)$ and $\tau(t(x),x^i)$ is less than $\delta$.
%lie in the same component of $s_{n,j_n(x)}$.
%Let $I_{ij}=<t(x^i), t(x^j)>$ be one of the two arcs of ${\bf S^1}$ with end points $t(x^i), t(x^j)$ which is contained in $S_{n, j_n(x)}$. Assume that
%\begin{equation}\label{xixj}
%|x^i-x^j|<\frac{\rho}{4} \mbox{ and }
%\mbox{ the lenght of the arc } I_{ij} \mbox{ is less than } \delta.
%\end{equation}
%Moreover,
Assume for an arbitrary $n$ as above, that either (I) $t(x^j)$ and $t(x^i)$ belong to a single component of $s_{n, j_n(x)}$, or (II) the map $\sigma^{j_n(x)-1}: S_{n,1}\to S_{n, j_n(x)}$
is a homeomorphism and the length of the arc $S_{n, j_n(x)}$ is less than $\delta$.
%Let $W=W_n(x^j, \rho)$ in the first case and
%$W=\widetilde W_n(x^j, \rho)$ in the second.

Then:

{\bf (a)} the map $f^{(j-i) p_n}: g_{(j-i) p_n}(B(x^i,\rho))\to B(x^i,\rho)$ has a unique fixed point
$a=a_n$ and $a\in J_{n,x}$,

{\bf (b)} there is a semi-open simple curve
$$\gamma_{p_n,q,i,j}(x)\subset B(x^i, \rho)\cap A(\infty)$$
such that:
\begin{enumerate}
\item it lands at $a$ and $g_{(j-i) p_n}(\gamma_{p_n,q,i,j}(x))\subset \gamma_{p_n,q,i,j}(x)$.
Another end point $b$ of $\gamma_{p_n,q,i,j}(x)$ lies in $R^i$ and $G(b)>\tilde C/2$,
\item $\gamma_{p_n,q,i,j}(x)=\cup_{l\ge 0} g_{(j-i)p_n}^l(L_0\cup L_1)$ where the 'fundamental arc' $L_0\cup L_1$
consists of an arc $L_0$ of an equipotential of the level at least $\tilde C/2$ that joins a point $b\in R^i$ with a point $b_1\in R^j$,
being extended by an arc $L_1$ of the ray $R^j$ between points $b_1$ and $g_{(j-i)p_n}(b)\in R^j$;
in particular, the Green function $G(y)$ at a point $y$
is not increasing as $y$ moves from $b$ to $a$ along $\gamma_{p_n,q,i,j}(x)$,
\item the point $a$ is the landing point of a ray $R(a)$ which is fixed by $f^{(j-i) p_n}$ and which is homotopic
to $\gamma_{p_n,q,i,j}(x)$ through a family of curves in $A(\infty)$ with the fixed end point $a$.
%Note that $g_{(j-i) p_n}(b)\in R^j\cap \gamma_{p_n,q,i,j}(x)$.
\item arguments of all points of the curve $g_{(j-i) p_n}(\gamma_{p_n,q,i,j}(x))$ lie in a single component of $s^1_{n, j_n(x)}$ in the case (I)
and in a single component of $s_{n,j_n(x)}$ in the case (II) (recall that $s^1_{n, j_n(x)}$ has $4$ components and $s_{n,j_n(x)}$ has $2$ components, see Sect \ref{prel}, (C)).

%if $t(x^i), t(x^j)$ lie in a single component $s'$ of $s_{n, j_n(x)}$ then arguments of all points of $\gamma_{p_n,q,i,j}(x)$ lie in $s'$;
%if $t(x^i)\in s', t(x^j)\in s''$ where $s', s''$ are different components of $s_{n, j_n(x)}$ then arguments of all points of $g_{(j-i) p_n}(\gamma_{p_n,q,i,j}(x))$ lie in $s''$.

%\iffalse

%in the case (I), arguments of the points of $\gamma_{p_n,q,i,j}(x)$ lie in the component $s'$
%of $s_{n,j_n(x)}$ and in the case (II) arguments of the points of $g_{(j-i) p_n}(\gamma_{p_n,q,i,j}(x))$
%all lie in a single component of $s_{n,j_n(x)}$.

\end{enumerate}
Besides,
\begin{equation}\label{xja}
|a-x^j|\to 0 \mbox{ and } \log\frac{|(g_{(j-i)p_n})'(x^j)|}{|(g_{(j-i)p_n})'(a)|}\to 0
\end{equation}
as $n\to\infty$, uniformly in $x^j$ and $q$.
%$G(w(a))>\tilde C_2$, where $w(a)$ is the first intersection of the ray $R(a)$ with $\partial B(x,\rho/2)$.

{\bf (c)} if $j-i=1$ then $a=\beta_{n,j_n(x)}$ where
$\beta_{n,j_n(x)}=f^{j_n(x)-1}(\beta_n)$, the non-separating
fixed point of $f^{p_n}:J_{n,x}\to J_{n,x}$. Moreover,
$$\chi(\beta_{n,j_n(x)}):=\frac{1}{p_n}\log |(f^{p_n})'(\beta_{n,j_n(x)})|=\frac{1}{p_n}\log |(f^{p_n})'(\beta_{n})|\to \chi(\mu)$$
as $n\to\infty$.
%where $\chi(\mu)$ is the Lyapunov exponent of the measure $\mu$.
\end{lemma}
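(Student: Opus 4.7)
The plan is to use the smallness of $|t(x^i)-t(x^j)|$ supplied by (I) or (II) to turn $g := g_{(j-i)p_n}$ into a uniform contraction of $B(x^i,\rho)$, apply the contraction mapping principle for $a$, and then build $\gamma$ by iterating an explicit fundamental arc under $g$.

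\medskip

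\noindent\textbf{Contraction and fixed point.} Under (I) the length of the component of $s_{n,j_n(x)}$ containing $t(x^i),t(x^j)$ equals $(\tilde t_n-t_n)/2^{p_n-j_n(x)+1}$, which tends to $0$ uniformly by ($e_4$); under (II) we have $|t(x^i)-t(x^j)|\le|S_{n,j_n(x)}|<\delta$ directly. Hence for $n>\tilde n(\epsilon,\rho,q)$ the difference $|t(x^i)-t(x^j)|<\delta\le \Delta(\rho/4)$, so $|x^i-x^j|<\rho/4$ by Lemma~\ref{p0}(c). By Proposition~\ref{acc}(2), $g$ is univalent on $B(x^i,2r(\epsilon))$ with distortion $\le 2$ on $B(x^i,\rho)$, and $|g'(x^i)|=\exp(-(j-i)p_n(\chi(\mu)+o(1)))\to 0$, giving $\diam g(B(x^i,\rho))<\rho/4$ for $n$ large. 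Combined with $|x^i-x^j|<\rho/4$, this shows $g(B(x^i,\rho))\subset B(x^i,\rho/2)$, so $g$ is a strict contraction with unique fixed point $a\in B(x^i,\rho/2)$. The iterates $g^\ell(x^i)=x^{i+\ell(j-i)}$ lie in $J_{n,x}$ by Lemma~\ref{p0}(b) and converge to $a$, so $a\in J_{n,x}$, giving (a). From $|a-x^j|=|g(a)-g(x^i)|\le 2\rho|g'(x^i)|\to 0$ and Koebe distortion on the disk of fixed radius $r(\epsilon)$ around $a$, one obtains (\ref{xja}).

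\medskip

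\noindent\textbf{The curve $\gamma$.} Set $\tilde C := C(\rho/2)$, let $b\in R^i$ be at level $\tilde C$ (so $b\in B(x^i,\rho/2)$), and let $b_1\in R^j$ be at the same level. By $\delta\le\hat\delta(\rho,\tilde C)$ the equipotential arc $L_0$ from $b$ to $b_1$ has length $<\rho/2$ and lies in $B(x^i,\rho)$; the arc $L_1$ of $R^j$ from $b_1$ down to $g(b)$ lies in $B(x^j,\rho/2)\subset B(x^i,3\rho/4)$. Define $\gamma := \bigcup_{\ell\ge 0} g^\ell(L_0\cup L_1)$. Consecutive pieces concatenate at the points $g^\ell(b)$ and occupy disjoint level annuli $\{\tilde C/2^{(\ell+1)(j-i)p_n}\le G\le \tilde C/2^{\ell(j-i)p_n}\}$, so $\gamma$ is a simple curve, $G$ is monotone non-increasing along it, and it lands at $a$ since $g^\ell(b)\to a$; invariance $g(\gamma)\subset\gamma$ is tautological. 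This yields (1)--(2) of (b). For (4), tracking the branch of $\sigma^{-(j-i)p_n}$ implemented by $g$: the short angle interval spanned by $L_0$ (contained in the component of $s_{n,j_n(x)}$ containing $t(x^i),t(x^j)$ in case (I); in $S_{n,j_n(x)}$ in case (II)) is mapped by this branch into the component of $s^1_{n,j_n(x)}$ (resp.\ $s_{n,j_n(x)}$) containing $t(x^j)$.

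\medskip

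\noindent\textbf{Ray $R(a)$ and $a=\beta_{n,j_n(x)}$.} Since $a$ is a repelling periodic point of $f$, external rays land at $a$, and the simple curve $\gamma$ realises the principal prime end of $A(\infty)$ associated to one of them; $\gamma$ is therefore homotopic rel endpoints in $A(\infty)\cup\{a\}$ to a landing ray $R(a)$. The containment $g(\gamma)\subset\gamma$ forces $g$ (as a local homeomorphism at $a$) to fix this prime end, so $R(a)$ is fixed by $f^{(j-i)p_n}$, giving (3). For (c) with $j-i=1$, $a\in\{\alpha_{n,j_n(x)},\beta_{n,j_n(x)}\}$; since $R^i\subset\overline{U_{n,j_n(x)}}$ the construction keeps $\gamma\subset\overline{U_{n,j_n(x)}}$, so the argument of $R(a)$ lies in the closure of $s_{n,j_n(x)}$. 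Among $\sigma^{p_n}$-fixed angles of rays landing in $J_{n,x}$, only $t_{n,j_n(x)}$ and $\tilde t_{n,j_n(x)}$, landing at $\beta_{n,j_n(x)}$ by Sect.~\ref{prel}(C) and Lemma~\ref{satellite}, lie in $\overline{s_{n,j_n(x)}}$; any $\sigma^{p_n}$-fixed rays at $\alpha_{n,j_n(x)}$ are separated from $s_{n,j_n(x)}$ by the gap $(t'_{n,j_n(x)},\tilde t'_{n,j_n(x)})$. Hence $a=\beta_{n,j_n(x)}$. Finally, $g'_{p_n}(\beta_{n,j_n(x)})=1/(f^{p_n})'(\beta_n)$ combined with (\ref{xja}) gives $p_n^{-1}\log|(f^{p_n})'(\beta_n)|=-p_n^{-1}\log|g'_{p_n}(x^j)|+o(1)\to\chi(\mu)$ by Proposition~\ref{acc}(2).

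\medskip

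The main obstacles I foresee are the topological step identifying the prime end of $\gamma$ at $a$ with a single external ray (and verifying the ray is fixed, not merely periodic, by $f^{(j-i)p_n}$), and the ray-combinatorial argument in (c) separating the fixed angles at $\beta_{n,j_n(x)}$ from those at $\alpha_{n,j_n(x)}$; the rest of the proof is essentially bookkeeping once the contraction estimate is in place.
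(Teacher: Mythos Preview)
Your overall strategy---contract, find $a$, build $\gamma$ by iterating a fundamental arc---matches the paper. The serious gap is your argument that $a\in J_{n,x}$. You assert $g^\ell(x^i)=x^{i+\ell(j-i)}$, but this is not justified: $g=g_{(j-i)p_n}$ is the branch of $f^{-(j-i)p_n}$ on $B(x^i,3\rho)$ determined by $g(x^i)=x^j$, i.e.\ by the first $(j-i)p_n$ steps of the backward $J_\infty'$-orbit of $x^i$. When you apply $g$ again to $x^j$, you are using the \emph{same} branch, not the branch determined by the backward orbit of $x^j$; there is no reason these coincide on $B(x^i,3\rho)$. Concretely, $g^2(x^i)=g(x^j)$ and $g_{2(j-i)p_n}(x^i)=x^{2j-i}$ are both $f^{2(j-i)p_n}$-preimages of $x^i$, but nothing forces $x^{2j-i}$ to lie in the small disk $g(B(x^i,3\rho))$ (whose diameter is $\asymp\rho|g'(x^i)|$), while $J_{n,x}$ has diameter $\ge r(\epsilon)$. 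So $g^2$ and $g_{2(j-i)p_n}$ can be different branches, and your limit argument collapses.

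The paper does \emph{not} try to identify $g^\ell(x^i)$ with a point of $J_\infty'$. Instead it first proves property~(4) in full (by contradiction: if the arguments $\Lambda$ of $g(\gamma)$ met $\partial s$, then some $\sigma^{l(j-i)p_n}(t)$ would land simultaneously in the arguments of $L_0$ and on $\partial S_{n,j_n(x)}$), and then observes that (4), together with its $\sigma^{kp_n}$-iterates for $0\le k\le j-i-1$, forces the entire $f^{p_n}$-orbit of $a$ into $\overline{U_{n,j_n(x)}}$ (case~II) or $\overline{U_{n,j_n(x)-p_n}}$ (case~I). The characterization of $J_{n,x}$ as the non-escaping set of $f^{p_n}$ in these $\overline U$'s (Section~\ref{prel}(C)) then gives $a\in J_{n,x}$. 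Your sketch of~(4) only handles the first application of $g$ to $L_0$; you need the iterated version to reach $a$. For~(c), once $a\in J_{n,x}$ and $R(a)$ is $f^{p_n}$-fixed, the rotation number of $a$ is $0$, and the separating fixed point $\alpha_{n,j_n(x)}$ has rotation number $\ne 0$ (being the landing point of a nontrivially permuted cycle of rays), so $a=\beta_{n,j_n(x)}$ directly---your detour through the gap $(t'_{n,j_n(x)},\tilde t'_{n,j_n(x)})$ is unnecessary.
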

\begin{remark}
Note that $a\notin J_\infty$ while $x, x^1,...,x^{q-1}\in J_\infty$.
\end{remark}
\begin{proof}
%All $x, x^1,...,x^q\in J_{x,n}$ by combinatorics and definition of $g_n$.

\iffalse

Given $\hat r\in (0,1)$ and $\hat C>0$, define $\hat\delta=\hat\delta(r,\hat C)>0$
%and $\tilde n\in {\bf N}$
as follows. Let $C_0>0$ be so that the distance between the equipotential of level $C_0$ and $J(f)$ is bigger than $1$. Then $\hat\delta=\hat\delta(\hat r/2,\hat C)>0$ is such that for any $C\in [\tilde C, C_0]$,
%and any two points $w_1,w_2\in A(\infty)$,
if $w_1,w_2$ lie on the same equipotential $\Gamma$ of level $C$ and the difference between external arguments of $w_1,w_2$ is less than
$\hat\delta$ then the length of the shortest arc of the equipotential $\Gamma$ between $w_1$ and $w_2$ is less than $\hat r/2$.
Apply Lemma \ref{p0}(c)
with $\sigma=\rho/4$ and find the corresponding $\Delta(\rho/4)$.
%Let $\tilde C=C(\rho/2)$ where $C(\nu)$ is defined in Proposition \ref{acc}.
%Apply Claim $\tilde 1$ with $\hat r=\rho$ and $\hat C=\tilde C$ and find the corresponding $\hat\delta$.
let
$$\delta=\delta(\epsilon, \rho):=\min\{\hat\delta(\rho, C(\rho/2)), \Delta(\frac{\rho}{4})\}$$
where $C(\nu)$ is defined in Proposition \ref{acc}.

\fi

Fix $n_0$ such that, for every $n>n_0$ and $x\in E_\epsilon$, the length of each 'window'
of $s_{n, j_n(x)}$ is less than $\delta$. Therefore, for $n>n_0$, in either case (I), (II),
\begin{equation}\label{xixj}
|t(x^i)-t(x^j)|<\delta,
\end{equation}
which implies, in particular, that $|x^i-x^j|<\rho/4$.

%%%%%%%%\fi       I moved this \fi a few lines above, since this paragraph were missing !!! FP

Denote $G_n:=g_{(j-i)p_n}$ which is a holomorphic univalent function in $B(x^i,\rho)$.
Since $g_m$ are uniform contractions there is $n_1$ such that $G_n(\overline{B(x^i,\rho)})\subset B(x^i,\rho/2)$ whenever
$n>n_1$. Let $\tilde n=\max\{n_0, n_1\}$.

Let also $\tilde C=C(\rho/2)$, where $C(\nu)$ is defined in Proposition \ref{acc}.

Let $a=a_n$ be the unique fixed point of the latter map $G_n$.
We construct the curve $\gamma_{p_n,q,i,j}(x)$ to the point $a$ as follows.
First, joint a point $b\in R^i$, $G(b)=(3/4)\tilde C$, to a point $b_1\in R^j$ by an arc $L_0$ of the
equipotential $\{G(z)=(3/4)\tilde C\}$. By the choice of $\delta>0$, $L_0\subset B(x^i,\rho)$.
%hence, $L_0\subset W$.
Secondly, connect $b_1$
to the point $g_{(j-i)p_n}(b)\in R^j$ by an arc $L_1\subset R^j$. Let now
$\gamma_{p_n,q,i,j}(x)=\cup_{l\ge 0} g_{(j-i)p_n}^l(L_0\cup L_1)$.
Then properties  (1), (2) in (b) are immediate and (3) follows from general properties of conformal maps. Now, by Proposition \ref{acc}(2) and (\ref{xixj}), for all $n$ big enough,
$x^j=g_{(j-i)p_n}(x^i)\in g_{(j-i)p_n}(B(x^i,\rho))\subset B(x^i,\rho)$, moreover, the modulus of the annulus $B(x^i,\rho)\setminus g_{(j-i)p_n}(B(x^i,\rho))$
tends to $\infty$ as $n\to\infty$. Therefore, (\ref{xja}) follows from Koebe and Proposition \ref{acc}(2).

It remains to show the property (3) and that $a\in J_{n,x}$.
Consider the case (II), which is equivalent to say that the map $\sigma^{p_n}: s\to S_{n, j_n(x)}$ is a homeomorphism on each of two components $s$ of $s_{n, j_n(x)}$.
Let $\Lambda$ be the set of arguments of points of the curve $\Gamma:=g_{(j-i) p_n}(\gamma_{p_n,q,i,j}(x))$. Let $s$ be a component that contains $t(x^j)$.
Assume, by a contradiction, that $\Lambda$ contains $t$ which is in the boundary of $s$. Then $t$ is the argument of a point of $G_n^l(L_0)$, for some $l\ge 1$, hence,
$\sigma^{l(j-i)p_n}(t)$ is simultaneously the argument of a point of $L_0$ and in the boundary of $S_{n,j_n(x)}$, a contradiction.
The case (I) is similar. Property (3) is verified. In fact, we proved more: for $k=0,1,\cdots,j-i-1$, the set $\sigma^{k p_n}(\Lambda)$ is a subset of a single
(depending on $k$) component of $s_{n, j_n(x)}$ in the case (II) and a single component of $s^1_{n, j_n(x)}$ in the case (I).
This implies that all point $f^{k p_n}(a)$, $0\le k\le j-i-1$, of the cycle of $f^{p_n}$ containing $a$ belong to the closure of $U_{n,j_n(x)}$ in the case (II) and to the closure
of $U_{n, j_n(x)-p_n}$ in the case (I). Therefore, this cycle lies in $J_{n,x}$, in particular, $a\in J_{n,x}$.

Proof of (c): if $j-i=1$ then
$a$ is a fixed point of $f^{p_n}:J_{n,x}\to J_{n,x}$ and, moreover, the ray $R(a)$ lands at $a$ and is fixed by $f^{p_n}$.
Hence, the rotation number of $a$ w.r.t. the map $f^{p_n}:J_{n,x}\to J_{n,x}$ is zero. On the other hand, $\beta_{n,j_n(x)}$
is the only such a fixed point, i.e., $a=\beta_{n,j_n(x)}$ as claimed.
Then (\ref{xja}) implies that $\chi(\beta_{n,j_n(x)})\to \chi(\mu)$.
\end{proof}

{\bf For the rest of the paper}, let us {\bf fix $Q$, $\epsilon$, $r$, $\rho$, $\tilde n$, $\tilde C$ and} $\delta$ as follows:

{\it $Q\in\N$, $Q>3$, is such that}
$$Q>4\log 2/\chi(\mu).$$
This choice is motivated by the following

{\bf Fact} (\cite{Pom}, \cite{Lineq}, \cite{H}): if a repelling fixed point $z$ of $f^n$ is the landing point of $q$
rays, then $\chi(z):=(1/n)\log |(f^n)'(z)|\le (2/q)\log 2$. Hence, if $\chi(z)>\chi(\mu)/2$, then
$q<Q$.

Furthermore, {\it fix $\epsilon>0$ such that $2^{100}Q\epsilon<1$,
apply Proposition~\ref{acc} and Lemma~\ref{fixed} and find, first, $r=r(\epsilon)$, then
fix $\rho\in (0,r/32)$ and find the corresponding $\tilde n$, $\tilde C$ and $\delta$.}

\

Let
$$X_n=E_{\epsilon,p_n,4}\cap E_{\epsilon, p_{n-1},Q}=\cap_{i=0}^3 f^{i p_n}(E_\epsilon)\cap_{k=0}^{Q-1} f^{k p_{n-1}}(E_\epsilon).$$
%For every $n$ consider the set of indices (that could be empty)
%$$I_n=\{0<j<p_n: \tilde t_{n,j}-t_{n,j}<\delta\}.$$
%Note that if for some large $n$
%$$\frac{|I_{n-1}|}{p_{n-1}}>2^{-6},$$
%then
%$$\frac{|L_{n-1}\cap I_{n-1}|}{p_{n-1}}>2^{-7}.$$
Let us analyze several possibilities.

\begin{lemma}\label{p1}
%Let $j\in L_{n-1}$ and let $x=x^0\in X_n\cap f^j(J_{n-1})$.
There is $n_*>\tilde n$ as follows. Let $n>n_*$ and $x\in X_n$.
Consider $J_{n,x}=f^{j_n(x)}(J_n)\subset f^{j_{n-1}(x)}(J_{n-1})$ so that $x\in J_{n,x}$.

%Let $x=x^0\in X_n$ so that $x\in J_{n,x}=f^{j_n(x)}(J_n)$.

Let $x^0=x$ and $x^1=x_{-p_n}$. Assume that either
(I) $t(x^0)$, $t(x^1)$ belong to a single component of $s_{n, j_n(x)}$, or (II) the map $\sigma^{j_n(x)-1}: S_{n,1}\to S_{n, j_n(x)}$
is a homeomorphism and the length of the arc $S_{n, j_n(x)}$ is less than $\delta$.

Then:

(i)
$\chi(\beta_{n,j_n(x)})=\chi(\beta_n)\to \chi(\mu)$ as $n\to \infty$ and $\chi(\beta_n)>\chi(\mu)/2$ for $n>n_*$.

(ii) assume that $f^{p_n}$ is satellite, i.e., (by Lemma \ref{satellite}) $\beta_n$ has period $p_{n-1}$, $q_n\ge 2$ with rotation number $r_n/q_n$ of $\beta_n$, and $\beta_{n,j_n(x)}$
is the $\alpha$ (i.e., separating) fixed point of $f^{p_{n-1}}: J_{n-1,x}\to J_{n-1,x}$.
Then $q_n<Q$ and
\begin{equation}\label{xkbeta}
|\beta_{n,j_n(x)}-x_{-k p_{n-1}}|\to 0, \ n\to\infty, \mbox{ uniformly in } x\in X_n, \ 1\le k\le q_{n}.
\end{equation}
There exist two simple semi-open curves $\gamma(x)$ and $\tilde\gamma(x)$ that satisfy the following properties:
\begin{enumerate}
\item $\gamma(x)$ and $\tilde\gamma(x)$ tend to $\beta_{n,j_n(x)}$ and
$\gamma(x), \tilde\gamma(x)\subset B(x^0, \rho)\cap A(\infty)$,
\item $\gamma(x), \tilde\gamma(x)$ consist of arcs of equipotentials and external rays;
the starting point $b_1=b_1(x)$ of $\gamma(x)$ lies in an arc of $R_{t(x^1)}$
and the starting point $\tilde b_1=\tilde b_1(x)$ of $\tilde\gamma(x)$ lies in an arc of $R_{t(\tilde x)}$
where $\tilde x=x_{-i p_{n-1}}$ for some $i=i(x)\in\{1,\cdots, q_n-1\}$, such that levels of $b_1$ and $\tilde b_1$ are
equal and at least $\tilde C/4$,
\item one of the two curves (say, $\gamma(x)$) is homotopic, through curves in $A(\infty)$ tending to $\beta_{n,j_n(x)}$,
to the ray $R_{t_{n,j_n(x)}}=f^{j_n(x)-1}(R_{t_n})$, and another one - to the ray $R_{\tilde t_{n,j_n(x)}}=f^{j_n(x)-1}(R_{\tilde t_n})$;
%???????ne nado???????? furthermore, both curves are invariant under the branch $g_{p_n}$ of $f^{-p_n}$ that fixes $\beta_{n,j_n(x)}$,???
\item $\gamma(x)$, $\tilde\gamma(x)\subset U_{n-1, j_{n-1}(x)}$,
%arguments of points of $\gamma(x)$ lie in the interval $[t_{x^1},t_{n,j_n(x)})$ and arguments of points of $\tilde\gamma(x)$ - in $[t_{\tilde{x}},\tilde t_{n,j_n(x)})$,??????????????????
%where the lengths of these two intervals tend to zero as $n\to\infty$ uniformly in $j$ and $x$,
\item $\gamma(x)\subset U_{n,j_n(x)}$, $\tilde\gamma(x)\subset U_{n,j_n(\tilde x)}$, in particular, $\gamma(x),\tilde\gamma(x)$ are disjoint; being completed by their common limit point $\beta_{n,j_n(x)}$ and two other arcs: an arc of the
ray $R_{t(x^1)}$ from $b_1\in\gamma(x)$ to $\infty$ and an arc of the ray $R_{t(\tilde x)}$ from $\tilde b_1\in\tilde\gamma(x)$ to $\infty$,
they split the plane into two domains such that one of them contains $I:=J_{n,x}\setminus \beta_{n,j_n(x)}$
and another one contains all $q_n-1$ other different iterates $f^{k p_{n-1}}(I)$, $1\le k\le q_n-1$.
The intersection of closures of all those $q_n$ sets consists of the fixed point $\beta_{n,j_n(x)}$ of $f^{p_{n-1}}$.
%which is at the same time $\alpha$-fixed point of $f^{p_{n-1}}: J_{n-1,x}\to J_{n-1,x}$.
\end{enumerate}
\end{lemma}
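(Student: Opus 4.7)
Part (i) follows at once from Lemma \ref{fixed}(c) applied with $j-i=1$ and $q=2$ (note $X_n \subset E_{\epsilon,p_n,2}$): this yields $\chi(\beta_n) = \chi(\beta_{n,j_n(x)}) \to \chi(\mu)$, so $\chi(\beta_n) > \chi(\mu)/2$ once $n_*$ is chosen large enough. The cited Fact then bounds the number of external rays landing at $\beta_n$ by $2\log 2/\chi(\beta_n) < 4\log 2/\chi(\mu) < Q$, and by Lemma \ref{satellite}(3) this number is exactly $q_n$ in the satellite case; hence $q_n < Q$.

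For the uniform convergence (\ref{xkbeta}), note that since $x \in E_{\epsilon,p_{n-1},Q}$ and $q_n < Q$, Proposition \ref{acc}(2) supplies, for each $0 \le k \le q_n$, a univalent branch $g_{kp_{n-1}}\colon B(x,2r)\to\C$ with $g_{kp_{n-1}}(x) = x_{-kp_{n-1}}$, each being a uniform contraction of ratio $\approx e^{-kp_{n-1}\chi(\mu)}$ in $n$. By Lemma \ref{fixed}(c), $\beta_{n,j_n(x)}$ lies in $B(x,\rho)$ (since $|\beta_{n,j_n(x)} - x^1| \to 0$ and $|x^0-x^1| < \rho/2$), and being fixed by $f^{p_{n-1}}$ it must coincide with the unique attracting fixed point of the contraction $g_{p_{n-1}}$ on $B(x,\rho)$. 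Iterating then yields $|x_{-kp_{n-1}} - \beta_{n,j_n(x)}| \le 2 |(g_{p_{n-1}})'(x)|^k \rho$, which tends to zero uniformly in $k \in \{1,\ldots,q_n\}$ and $x \in X_n$.

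For the curves, I would set $\gamma(x) := \gamma_{p_n,2,0,1}(x)$ from Lemma \ref{fixed}, truncated so that its starting point $b_1$ lies on $R^1 = R_{t(x^1)}$. Properties (1), (2) and the corresponding half of (3) for $\gamma(x)$ then follow directly from Lemma \ref{fixed}(b). For $\tilde\gamma(x)$, I exploit the combinatorics from Lemma \ref{satellite}(3): the $q_n$ rays landing at $\beta_{n,j_n(x)}$ are cyclically permuted by $f^{p_{n-1}}$ with rotation number $r_n/q_n$, $\gcd(r_n,q_n)=1$, and the two boundary rays of $U_{n,j_n(x)}$ are adjacent in this cycle. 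Hence there is a unique $i \in \{1,\ldots,q_n-1\}$ such that $\tilde x := x_{-ip_{n-1}}$ lies in the sector $U_{n,j_n(\tilde x)}$ adjacent to $U_{n,j_n(x)}$ across the boundary ray opposite to the one approximated by $\gamma(x)$. I define $\tilde\gamma(x)$ by applying the construction of Lemma \ref{fixed} to the pair $(\tilde x, \tilde x_{-p_n})$; the resulting curve lands at $\beta_{n,j_n(\tilde x)} = \beta_{n,j_n(x)}$, the equality following from $j_n(\tilde x) \equiv j_n(x) \pmod{p_{n-1}}$ and the fact that $\beta_n$ has period $p_{n-1}$ under $f$. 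Properties (4) and (5) then follow from the nesting $U_{n,j_n(x)}, U_{n,j_n(\tilde x)} \subset U_{n-1,j_{n-1}(x)}$ (using $J_{n,x}, J_{n,\tilde x} \subset J_{n-1,x}$) and from the observation that $\gamma(x) \cup \tilde\gamma(x)$, completed by arcs of $R^1$ and $R_{t(\tilde x)}$ going to $\infty$ and by the common endpoint $\beta_{n,j_n(x)}$, forms a Jordan curve in $\hat\C$ wrapping tightly around $J_{n,x}$, while the $q_n-1$ other iterates $f^{kp_{n-1}}(J_{n,x}\setminus \beta_{n,j_n(x)})$ sit on the opposite side.

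The main obstacle is the construction of $\tilde\gamma(x)$: one must simultaneously (a) pick $i$ correctly so that $U_{n,j_n(\tilde x)}$ sits on the correct side (this uses invertibility of $r_n$ modulo $q_n$, hence $\gcd(r_n,q_n)=1$), and (b) ensure the Pesin-type hypotheses of Lemma \ref{fixed} hold at $\tilde x$, which requires $\tilde x_{-p_n} = x_{-(i+q_n)p_{n-1}} \in E_\epsilon$. The latter is an accounting point handled by the a priori bound $q_n \le 4\log 2/\chi(\mu)$ together with the choice of $Q$: if needed one refines $X_n$ by intersecting it with $E_{\epsilon,p_{n-1},2Q}$, whose $\mu$-measure is still bounded below by $1 - 2^{100}Q\epsilon > 0$ thanks to the constraints fixed in the setup, so that the uniform estimates of Proposition \ref{acc} transfer to $\tilde x$ without essential change.
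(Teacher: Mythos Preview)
Your treatment of part~(i), the bound $q_n<Q$, the convergence~(\ref{xkbeta}), and the curve $\gamma(x)$ are all essentially the paper's argument, and correct.

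The gap is in your construction of $\tilde\gamma(x)$. You propose to apply Lemma~\ref{fixed} at the point $\tilde x=x_{-ip_{n-1}}$ with the pair $(\tilde x,\tilde x_{-p_n})$. But Lemma~\ref{fixed} requires, as a hypothesis, that either condition (I) or condition (II) holds at that point: namely, that $t(\tilde x)$ and $t(\tilde x_{-p_n})$ lie in one window of $s_{n,j_n(\tilde x)}$, or that $\sigma^{j_n(\tilde x)-1}\colon S_{n,1}\to S_{n,j_n(\tilde x)}$ is a homeomorphism with $|S_{n,j_n(\tilde x)}|<\delta$. You are given (I) or (II) only at $x$, not at $\tilde x$, and neither transfers: for instance, in the main application (Case~A) one has $j_n(x)\le[\zeta p_n]$, but $j_n(\tilde x)=j_n(x)-ip_{n-1}+p_n$ can be close to $p_n$ when $ip_{n-1}>j_n(x)$, so (II) fails at $\tilde x$ in general. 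Without (I) or (II), Lemma~\ref{fixed}(a)--(c) does not conclude that the curve lands in $J_{n,\tilde x}$ at all, let alone at $\beta_{n,j_n(\tilde x)}$. Your ``main obstacle'' paragraph addresses only the Pesin-branch bookkeeping (membership in $E_\epsilon$), not this combinatorial hypothesis.

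The paper avoids this problem by a different construction: rather than running Lemma~\ref{fixed} again at $\tilde x$, it takes the curve $\gamma'(x):=\gamma_{p_n,1,0,1}(x)$ already built at $x$ and \emph{pulls it back} by the branch $g_{ip_{n-1}}$, where $i$ is chosen so that $\sigma^{ip_{n-1}}(\tilde t_{n,j_n(x)})=t_{n,j_n(x)}$. Since $g_{ip_{n-1}}$ fixes $\beta_{n,j_n(x)}$ (as you yourself argue for~(\ref{xkbeta})), the image curve still lands at $\beta_{n,j_n(x)}$, and since $g_{ip_{n-1}}$ is a contraction mapping $B(x,\rho/2)$ into itself, the image stays in $B(x,\rho)$. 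One then extends along $R_{t(\tilde x)}$ inside $B(x,\rho/2)$ back up to level $\tilde C/4$, using Proposition~\ref{acc}(1) and~(\ref{xkbeta}). This ``rotate the existing curve'' move is the missing idea: it reduces everything to the single invocation of Lemma~\ref{fixed} at $x$, where the hypotheses are given.
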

\begin{remark}
Beware that the point $x$ that determines both curves $\gamma(x)$, $\tilde\gamma(x)$ does not belong to either of these curves.
\end{remark}

\iffalse

\begin{figure}[h]
\includegraphics[width=7cm,height=7cm]{CCF_000009.pdf}
\caption{Lemma \ref{p1}}
\label{fig2}
\end{figure}

\fi

\begin{proof}
(i) follows from Lemma~\ref{fixed} where we take $i=0, j=1$. Fix $n_*>\tilde n$ such that $\chi(\beta_n)>\chi(\mu)/2$ for all $n>n_*$.

%(II) since $\tilde t_{n,j_n(x)}-t_{n,j_n(x)}<t_{n-1,j}-t_{n-1,j}<\delta$.

Let us prove (ii). Here we build a "flower" of arcs at the $\beta$ fixed of the satellite $f^{p_n}$ starting with an arc which is fixed by $f^{p_n}$
and then "rotate" this arc by a branch of $f^{-p_{n-1}}$ (for which the same $\beta$ point is also a fixed point, see (C)).
Let $\gamma'(x):=\gamma_{p_n,1,0,1}(x)$ where the latter curve is defined in Lemma~\ref{fixed}.
Then properties (1)-(3) of the curve $\gamma(x)$ are satisfied also for $\gamma'(x)$. In particular, $\gamma'(x)$ is homotopic to $R_{t_{n,j_n(x)}}$.

%note that by Corollary~\ref{fixed} there exists a curve
%$\gamma(x)$ with the listed properties, except, perhaps, the last one. In particular, $\gamma(x)$ is homotopic to $R_{t_{n,j_n(x)}}$.

As both $\tilde t_{n,j_n(x)}, t_{n,j_n(x)}$ are external arguments of $\beta_{n,j_n(x)}$ which is a $p_{n-1}$-periodic point of $f$, there is $i\in\{1,\cdots, q_n-1\}$ such that $\sigma^{i p_{n-1}}(\tilde t_{n,j_n(x)})=t_{n,j_n(x)}$.
%landing at $\beta_{n,j_n(x)}$ and with another end point $b$ such that
%$G(b)>\tilde C$. The curve $\Gamma$ is homotopic in $A(\infty)$ to one of the rays
%of arguments $t_{n,j_n(x)}, \tilde t_{n,j_n(x)}$ among the curved that fix $\beta_{n,j_n(x)}$.
%Assume for definicity that the argument is $t_{n,j_n(x)}$.
%Secondly, since $\chi(\beta_n)>(1/2)\chi(\mu)$ and by the choice of $Q$, $q_n<Q$.
Now we use that $x\in E_{\epsilon, p_{n-1},Q}$ and that $q_{n}<Q$ to prove (\ref{xkbeta}).
%and apply Corollary~\ref{fixed} with $p=e_n p_{n-1}$.
%Note that $f^{p}(\beta_{n,j_n(x)})=\beta_{n,j_n(x)}$ and
%$x_{-p_n}\in g_{p_n}(B(x,\rho))$.
%Let us assume the contrary: $q_n>1$. Then $f^{p_{n-1}}(\beta_{n,j_n(x)})=\beta_{n,j_n(x)}$.
%First of all, let us show that
%\begin{equation}\label{xkbeta}
%|\beta_{n,j_n(x)}-x_{-k p_{n-1}}|\to 0, \ \  k=1,\cdots,q_{n-1}-1, \mbox{ as } n\to\infty.
%\end{equation}
%as $n\to\infty$, for all $k\in\{1,cdots,q_{n-1}-1\}$.
Indeed, for each $k=\{1,\cdots, q_n\}$, since $f:J_\infty'\to J_\infty'$ is a homeomorphism and
$x_{-k p_{n-1}}\in E_\epsilon$, we have: $g_{p_n}=g_{(q_{n}-k) p_{n-1}}\circ g_{k p_{n-1}}$.
Hence, if $\beta'=g_{k p_{n-1}}(\beta_{n,j_n(x)})$, then $\beta_{n,j_n(x)}=g_{(q_{n-1}-k) p_{n-1}}(\beta')$
implying that $\beta'=f^{(q_{n}-k) p_{n-1}}(\beta_{n,j_n(x)})=\beta_{n,j_n(x)}$.
Then $\beta_{n,j_n(x)}, x_{-k p_{n-1}}\in g_{k p_{n-1}}(B(x,\rho))$ which along with Proposition \ref{acc}, part (2)
imply (\ref{xkbeta}).

In turn, (\ref{xkbeta}) implies that, provided $n$ is big,
%$g_{p_{n-1}}(x)\in B(x,\rho)$ and moreover
$g_{k p_{n-1}}: B(y,\rho/2)\to B(y,\rho/2)$ uniformly in $k=0,1,\cdots, q_n$ where $y$ is either $\beta_{n,j_n(x)}$ or $x_{-k p_{n-1}}$.

Now we consider a curve $g_{i \tilde p_{n}}(\gamma'(x))$ that starts at $x_{-i \tilde p_{n}}$ and tends to $\beta_{n,j_n(x)}$.
% $g_{i p_{n-1}}(\gamma'(x)$ that starts at $x_{-i p_{n-1}}$, and $g_{q_n p_{n-1}}(\gamma'(x)$ that starts at $x_{-p_n}$ ($p_n=q_n p_{n-1}$ of course).
By Proposition \ref{acc} coupled with (\ref{xkbeta}), one can join $x_{-i p_{n-1}}$ by an arc of the ray $R_{t(x_{-i p_{n-1}})}$
{\it inside of} $B(x,\rho/2)$ up to a point of level $\tilde C/4$. This will be the required curve $\tilde\gamma(x)$.
To get the curve $\gamma(x)$ we modify $\gamma'(x)=\gamma_{p_n,1,0,1}(x)=\cup_{l\ge 0} g_{p_{n}}^l(L_0\cup L_1)$
by cutting off the arc $L_0$ of an equipotential: $\gamma(x)=\gamma'(x)\setminus L_0$ (see Lemma \ref{fixed} for details about $L_0$).
Properties (1)-(5) follow.

%Another way to get the same curve is similar as we construct $\tilde\gamma(x)$: consider $g_{q_n p_{n-1}}(\gamma'(x))$ and extend it by an arc
%of $R_{t(x_{-p_n})}$.

%build inductively $i+1$ curves applying consequently
%$$g_{p_{n-1}}, g_{2p_{n-1}},..., g_{i p_{n-1}}$$
%to the previous curve starting with $\gamma(x)$ and extending each time the image by an arc of the ray $R_{t(x_{-p_{n-1}-k p_{n-1}})}$
%{\it inside of} $B(x,\rho/2)$ up to a point of level $\tilde C/4$. This is possible
%by Proposition~\ref{acc}. We
%obtain $i+1$ different curves in $B(x,\rho/2)\cap A(\infty)$
%each of which joins the point $\beta_{n,j_n(x)}$ with a point
%of equipotential of the level at least $\tilde C/4$..
%Moreover, the last of these curves, called $\tilde\gamma(x)$, is homotopic to the ray of argument
%$\tilde t_{n,j_n(x)}$. Then all listed properties are satisfied for $\tilde\gamma(x)$, by the construction.
%$It remains to modify the curve $\gamma(x)=\gamma_{p_n,1,0,1}(x)$ to satisfy the last properties, as follows:
%cut off from $\gamma_{p_n,1,0,1}(x)$ the arc of equipotential between points $b$ and $b_1$, see Corollary~\ref{fixed}.

\end{proof}
%We need the following notions.

Given a point $x=x^0$ and $n$ such that $x\in f^{j}(J_n)\cap E_{\epsilon, p_n, 1}$, where $j=j_n(x)$, let $x^1=x_{-p_n}$ and $t(x^0)$, $t(x^1)$
the arguments of $x^0$, $x^1$ as in Proposition \ref{acc}.
We call $x$ $n$-{\bf friendly} if
$t(x^0)$ and $t(x^{1})$ lie in the same component of $s_{n,j}$ and $n$-{\bf unfriendly} otherwise
(or simply friendly and unfriendly if $n$ is clear from the context).
The name reflects the fact that for an $n$-friendly point $x$ the condition (I) of Lemma \ref{p1} always holds
for $x^1=x$ and $x^2=x_{-p_n}$, so Lemma \ref{p1} always applies.

When the rotation number of $\alpha_n$ is equal to $1/2$ we have:
\begin{lemma}\label{p3}
There is $\tilde C_3>0$ (depending only on fixed $\epsilon$ and $\rho$) as follows.
Suppose that, for some $n>\tilde n$, the rotation number
of the separating fixed point $\alpha_n$ is equal to $1/2$.
Let $z=z^0\in f^j(J_n)\cap E_{\epsilon, p_n, 3}$ and $z^i=z_{-i p_n}$, $i=1,2,3$.
%we call $x^i$ ($0\le i<3$) {\it friendly} if
%$t^i(x)$ and $t^{i+1}(x)$ lie in the same component of $s_{n,j}$ and {\it unfriendly} otherwise.
%For $0<j<p_n$, let $UF_{n,j}$ be the set of $x\in E_{\epsilon, p_n,3}\cap f^j(J_n)$ such that
Assume that all three points $z^0, z^1, z^2$ are $n$-unfriendly.

%Assume that the set $UF_{n,j}$ is not empty and let $x\in UF_{n,j}$ and
%$x^k=x_{-k p_n}$, $k=0,1,2,3$.
%Assume that $|t(x^0)-t(x^2)|<\delta$, $|t(x^1)-t(x^3)|<\delta$.
Then
there exist two (semi-open) curves $\gamma^{1/2}_n(z)$ and $\tilde\gamma^{1/2}_n(z)$
consisting of arcs of rays and equipotentials with the following properties:

(i) $\gamma^{1/2}_n(z)\subset B(z,\rho)$, $\tilde\gamma^{1/2}_n(z)\subset B(z^1,\rho)$, moreover, arguments of points of $\gamma^{1/2}_n(z)$ lie in one 'window' of $s_{n,j}$ while
arguments of points of $\tilde\gamma^{1/2}_n(x)$ lie in another 'window' of $s_{n,j}$,

(ii) $\gamma^{1/2}_n(z)$ and $\tilde\gamma^{1/2}_n(z)$ converge to a common point $\alpha^*_{n,j}$ which is a fixed point of $f^{p_n}:f^j(J_n)\to f^j(J_n)$
(i.e., $\alpha^*_{n,j}$ is either the non-separating fixed point $\beta_{n,j}$ or the separating fixed point $\alpha_{n,j}$,

%(iii) arguments of the points of $\Gamma_n(x)$ lie in one window of $s_{n,j}$ while arguments of the points of $\tilde\Gamma_n(x)$ lie in another window of $s_{n,j}$,

(iii) starting points of $\gamma^{1/2}_n(z), \tilde\gamma^{1/2}_n(z)$
have equal Green level which is bigger than $\tilde C_3$,

(iv) $z^k-\alpha^*_{n,j}\to 0$, $0\le k\le 3$, as $n\to\infty$.
\end{lemma}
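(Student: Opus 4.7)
The plan is to exploit the alternating-window structure forced by the unfriendliness hypothesis, apply Lemma~\ref{fixed} case~(I) twice --- once to the pair $(z^0, z^2)$ and once to $(z^1, z^3)$ --- and then identify the two landing points as a single fixed point of $f^{p_n}$ via a contraction-uniqueness argument.

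Let $j = j_n(z)$ and let $A, B$ denote the two components (``windows'') of $s_{n, j}$. Since $z^0, z^1, z^2$ are all $n$-unfriendly, up to labelling we have $t(z^0), t(z^2) \in A$ and $t(z^1), t(z^3) \in B$, so condition~(I) of Lemma~\ref{fixed} holds for both pairs. First I apply Lemma~\ref{fixed} with base point $z^0$ and indices $(i, j) = (0, 2)$ to obtain the curve $\gamma^{1/2}_n(z) := \gamma_{p_n, 3, 0, 2}(z^0) \subset B(z, \rho) \cap A(\infty)$ landing at a fixed point $a$ of $f^{2 p_n}$ in $J_{n, j}$, starting on $R_{t(z^0)}$ at Green level $(3/4)\tilde C$. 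Then I apply Lemma~\ref{fixed} analogously with base point $z^1$ to obtain $\tilde\gamma^{1/2}_n(z) := \gamma_{p_n, 3, 0, 2}(z^1) \subset B(z^1, \rho) \cap A(\infty)$ landing at $a'$ fixed by $f^{2 p_n}$ in $J_{n, j}$, starting on $R_{t(z^1)}$ at the same level. Set $\tilde C_3 := \tilde C/2$.

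The crux is to show $a = a'$ and that both coincide with a single $f^{p_n}$-fixed point $\alpha^*_{n, j}$. By Proposition~\ref{acc}(2), for $n$ large enough $g_{p_n}$ is a strong contraction with $g_{p_n}(B(z, \rho)) \subset B(z, \rho/2)$. Since $a \in B(z, \rho/2)$, we have $g_{p_n}(a) \in B(z, \rho)$, and the commutativity $g_{2p_n}(g_{p_n}(a)) = g_{p_n}(g_{2p_n}(a)) = g_{p_n}(a)$ shows that $g_{p_n}(a)$ is a fixed point of $g_{2p_n}$ in $B(z, \rho)$. Uniqueness from Lemma~\ref{fixed}(a) forces $g_{p_n}(a) = a$, so $a$ is $f^{p_n}$-fixed in $J_{n, j}$, i.e.\ $a \in \{\alpha_{n, j}, \beta_{n, j}\}$. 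The same argument shows $a'$ is $f^{p_n}$-fixed. Since $a = \lim_k z^{2k}$ and $a' = \lim_k z^{2k+1}$ as limits of the $g_{2 p_n}$-contraction orbits, continuity gives $f^{p_n}(a') = a$, and the $f^{p_n}$-invariance of $a$ yields $a' = a =: \alpha^*_{n, j}$, proving~(ii).

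Properties~(iii) and~(iv) follow at once: (iii) from the built-in starting level $(3/4) \tilde C > \tilde C_3$, and (iv) from the uniform rate of contraction of $g_{2 p_n}$ applied to $z^{2k} \to a$ and $z^{2k+1} \to a$. For the single-window claim in~(i), the fundamental arc $L_0 \cup L_1$ of $\gamma^{1/2}_n(z)$ consists of a short equipotential arc between $t(z^0), t(z^2) \in A$ (which stays in $A$ since windows have diameter $< \delta$ for $n$ large) followed by a ray segment in $R_{t(z^2)} \subset A$; each further iterate $g_{2 p_n}^l(L_0 \cup L_1)$ has its arguments in an arc shrunk by $2^{-2 l p_n}$, so inductively all arguments of $\gamma^{1/2}_n(z)$ remain in $A$, and the symmetric argument places arguments of $\tilde\gamma^{1/2}_n(z)$ in $B$. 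The main obstacle is the identification step: it is essential that the contraction bounds of Proposition~\ref{acc}(2) place $g_{p_n}(a)$ within the ball of uniqueness for the two-step contraction $g_{2 p_n}$, so that $g_{p_n}(a) = a$ can be read off from uniqueness, bypassing any finer rotation analysis at $\alpha_{n, j}$.
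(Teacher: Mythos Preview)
Your construction of the two curves via Lemma~\ref{fixed} in case~(I), applied to the pairs $(z^0,z^2)$ and $(z^1,z^3)$, matches the paper exactly, and your treatment of (i), (iii), (iv) is fine. The genuine gap is in the identification step, and it stems from the containment you assert:
\[
g_{p_n}\bigl(B(z,\rho)\bigr)\subset B(z,\rho/2).
\]
This is false. The branch $g_{p_n}$ sends $z=z^0$ to $z^1$, and by the unfriendliness hypothesis the angles $t(z^0)$ and $t(z^1)$ lie in \emph{different} windows of $s_{n,j}$; there is no a priori bound on $|z^0-z^1|$ (indeed, the whole point of case~B2 later in the paper is that $|S_{n-1,j}|$ may stay bounded below). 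What the contraction actually gives is $g_{p_n}(B(z^0,\rho))\subset B(z^1,\rho)$, so uniqueness in Lemma~\ref{fixed}(a) yields only $g_{p_n}(a)=a'$, i.e.\ $f^{p_n}(a')=a$, and by the symmetric argument $f^{p_n}(a)=a'$. Thus $\{a,a'\}$ is a (possibly degenerate) $2$-cycle of $f^{p_n}$ in $f^j(J_n)$; you cannot conclude $a=a'$ from uniqueness alone.

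This is precisely where the rotation-number hypothesis enters, and it cannot be ``bypassed'' as you claim in your final sentence. Because $\alpha_n$ has rotation number $1/2$, the quadratic-like map $f^{p_n}:f^j(J_n)\to f^j(J_n)$ has no genuine period-$2$ orbit (the would-be $2$-cycle has merged with the $\alpha$-fixed point). Hence every $f^{2p_n}$-fixed point in $f^j(J_n)$ is already $f^{p_n}$-fixed, and the $2$-cycle $\{a,a'\}$ must be degenerate: $a=a'=:\alpha^*_{n,j}\in\{\alpha_{n,j},\beta_{n,j}\}$. Without this hypothesis the conclusion can fail outright: for a generic renormalization there \emph{is} a $2$-cycle, and $a,a'$ can land on its two distinct points. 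Once you replace your flawed uniqueness step by the rotation-number argument, the rest of your proof goes through.
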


\begin{proof}
%Recall notations $x^i=x_{-i p_{n-1}}$.
As $z\in E_\epsilon$, lengths of 'windows' of $s_{n,j_n(z)}$ tend uniformly to zero as $n\to\infty$. It follows from the definition of friendly-unfriendly points that
$t(z^0),t(z^2)$ are in one 'window' of $s_{n,j}$ and $t(z^1),t(z^3)$ are in another 'window' of $s_{n,j}$. Therefore, condition (I) of Lemma \ref{fixed} holds for each pair $z^0,z^2$ and $z^1,z^3$.
Now, apply Lemma \ref{fixed} to $z\in E_{\epsilon,p_n,3}$, first, with $i=0$, $j=2$, and then
with $i=1$, $j=3$. Let $\gamma^{1/2}_n(z)=\gamma_{p_n,3,0,2}(z)$ and $\tilde\gamma^{1/2}_n(z)=\gamma_{p_n,3,1,3}(z)$.
Then (i),(iii) hold.
To check (ii), note that these curves converge to some points $\alpha$,$\tilde\alpha\in f^j(J_n)$
which are fixed by $f^{2p_n}$
%though different from the non-separating fixed point $\beta_{n,j}$
%of $f^{p_n}:f^j(J_n)\to f^j(J_n)$ because arguments of $t(x),t(x^1)$ lie in different windows
%of $s_{n,j}$ and arguments of $t(x^1),t(x^2)$ lie in different windows of $s_{n,j}$ as well.
%Hence, these arguments form a 2-cycle for the map $\sigma^{p_n}$.
On the other hand, since the rotation number of $\alpha_n$ is $1/2$, $f^{p_n}:f^j(J_n)\to f^j(J_n)$ has no $2$-cycle.
%those external rays must tend
%to the $\alpha$-fixed point $\alpha_{n,j}$ of $f^{p_n}:f^j(J_n)\to f^j(J_n)$.
%the map $f^{p_n}:f^j(J_n)\to f^j(J_n)$ has no cycle of period $2$.
Therefore, one must have either $\alpha=\tilde\alpha=\beta_{n,j}$ or
$\alpha=\tilde\alpha=\alpha_{n,j}$, i.e., (ii) holds too. As $t(z^0)-t(z^2)\to 0$ and $t(z^1)-t(z^3)\to 0$ as $n\to\infty$, $z^0-z^2, z^1-z^3\to 0$, too, by Lemma \ref{p0}.
Besides, by (\ref{xja}), $z^2-\alpha, z^3-\tilde\alpha\to 0$ as $n\to\infty$. As $\alpha=\tilde\alpha=\alpha^*_{n,j}$,
(iv) also follows.
\end{proof}
The following is a consequence of Lemmas \ref{fixed} and \ref{p3}:
\begin{lemma}\label{doublem}
Let $n>\tilde n$. Assume that $f^{p_n}$ is satellite and doubling, i.e., $\beta_n=\alpha_{n-1}$ and the rotation number of $\alpha_{n-1}$ is equal to $1/2$
(in particular, $p_n=2p_{n-1}$).
For some $1\le j\le p_{n-1}$, denote $J:=f^j(J_{n-1})$. Let $J^1:=f^j(J_{n})$, $J^0:=f^{j+p_{n-1}}(J_{n})$ be the two small Julia sets of the next level $n$ which are
contained in $J$ (note that $J^0$ contains the critical point and $J^1$ contains the critical value of the map $F:=f^{p_{n-1}}: J\to J$).
Let $x\in J^1\cap E_\epsilon$ be such that all its $5$ forward iterates $x_{kp_{n-1}}=F^k(x)\in E_\epsilon$, $k=1,2,3,4,5$. Then there exist two simple semi-open curves
$\Gamma_n^{1/2}(x)$, $\Gamma_n^{1/2}(x)$ consisting of arcs of rays and equipotentials that satisfy essentially conclusions of the previous lemma where $n$ is replaced by $n-1$, i.e.:

(i) $\Gamma^{1/2}_n(x), \tilde\Gamma^{1/2}_n(x)\subset B(x,3/2\rho)$, moreover, arguments of points of $\Gamma^{1/2}_n(x)$ lie in one 'window' of $s_{n-1,j_{n-1}(x)}$ while
arguments of points of $\tilde\Gamma^{1/2}_n(x)$ lie in another 'window' of $s_{n-1,j_{n-1}(x)}$,

(ii) $\Gamma^{1/2}_n(x)$ and $\tilde\Gamma^{1/2}_n(x)$ converge to a common point $\beta^*_{n-1,j_{n-1}(x)}$ which is a fixed point of $f^{p_{n-1}}: f^j(J_{n-1})\to f^j(J_{n-1})$
(i.e., $\beta^*_{n-1,j_{n-1}(x)}$ is either the non-separating fixed point $\beta_{n-1,j_{n-1}(x)}$ or the separating fixed point $\alpha_{n-1,j_{n-1}(x)}$,

%(iii) arguments of the points of $\Gamma_n(x)$ lie in one window of $s_{n,j}$ while arguments of the points of $\tilde\Gamma_n(x)$ lie in another window of $s_{n,j}$,

(iii) starting points of $\Gamma^{1/2}_n(x), \tilde\Gamma^{1/2}_n(x)$
have equal Green level which is bigger than $\tilde C_3$,

(iv) $x_{k p_{n-1}}-\beta^*_{n-1,j_{n-1}(x)}\to 0$, $0\le k\le 3$ as $n\to\infty$ uniformly in $x$.
\end{lemma}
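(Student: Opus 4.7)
The plan is to reduce Lemma \ref{doublem} to an application of Lemma \ref{p3} at level $n-1$ (with period $p_{n-1}$ in place of $p_n$). The doubling hypothesis states that $\alpha_{n-1} = \beta_n$ has rotation number $1/2$ with respect to $F = f^{p_{n-1}}$, which is precisely the rotation-$1/2$ hypothesis needed by Lemma \ref{p3} one level down. Set $y := F^3(x) = x_{3 p_{n-1}}$. Since $F$ swaps $J^0$ and $J^1$, the backward iterates $y^k := y_{-k p_{n-1}} = F^{3-k}(x)$ alternate between the two level-$n$ small Julia sets inside $J$: $y^0, y^2 \in J^0$ and $y^1, y^3 \in J^1$. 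The assumption $F^k(x) \in E_\epsilon$ for $k = 0, \ldots, 5$ places $y$ in $E_{\epsilon, p_{n-1}, 4}$ and ensures the external arguments $t(y^k)$ from Proposition \ref{acc} are defined for $k = 0, \ldots, 3$; the additional forward iterates give the slack needed to propagate the invariance property $\sigma^{p_{n-1}}(t(y^{k+1})) = t(y^k)$ across all the $k$ used.

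The next step is to verify that $y^0, y^1, y^2$ are $(n-1)$-unfriendly. Since consecutive $y^i, y^{i+1}$ lie in different level-$n$ small Julia sets inside $J$, this reduces to the combinatorial statement that, in the rotation-$1/2$ satellite structure at level $n-1$, the two rays landing at $\alpha_{n-1, j_{n-1}(y)}$ (whose arguments are $t_{n, j_n(J^1)}$ and $\tilde t_{n, j_n(J^1)}$) separate the sector in $J$ containing $J^0$ from the sector containing $J^1$, and consequently arguments of rays landing in $J^0$ lie in one component of $s_{n-1, j_{n-1}(y)}$ while arguments of rays landing in $J^1$ lie in the other. Given this unfriendliness, Lemma \ref{p3} applied to $y$ at level $n-1$ produces two semi-open curves converging to a common point $\alpha^* \in f^{j_{n-1}(x)}(J_{n-1})$ fixed by $f^{p_{n-1}}$, and we set $\Gamma^{1/2}_n(x)$ and $\tilde\Gamma^{1/2}_n(x)$ to be these two curves with $\beta^*_{n-1, j_{n-1}(x)} := \alpha^*$.

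Properties (ii), (iii), and the separation of arguments in (i) transfer directly from Lemma \ref{p3} at level $n-1$. For (iv), Lemma \ref{p3}(iv) gives $y^k \to \beta^*$ as $n \to \infty$ for $k = 0, 1, 2, 3$, and since $y^k = x_{(3-k) p_{n-1}}$ this is exactly $x_{l p_{n-1}} - \beta^*_{n-1, j_{n-1}(x)} \to 0$ for $l = 0, 1, 2, 3$. For the ball containment in (i), the two curves lie in $B(y^0, \rho) \cup B(y^1, \rho)$, and by the convergence just noted $|y^0 - x|, |y^1 - x| < \rho/2$ for $n$ large enough, so both balls sit inside $B(x, 3\rho/2)$. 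The principal obstacle is the combinatorial verification in the second paragraph — pinning down that rays landing in $J^0$ and rays landing in $J^1$ fall into distinct components of $s_{n-1, j_{n-1}(y)}$ — which requires unwinding the definitions of $s_{n-1, j}$ and $s_{n, j}$ and tracking how the two rays to $\alpha_{n-1}$ partition the arc $S_{n-1, j_{n-1}(y)}$ into the sectors hosting $J^0$ and $J^1$.
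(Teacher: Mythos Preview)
Your reduction to Lemma \ref{p3} at level $n-1$ with $y=F^3(x)$ is the paper's case (i), but it does not cover all situations. The claim that ``arguments of rays landing in $J^0$ lie in one component of $s_{n-1,j_{n-1}(y)}$ while arguments of rays landing in $J^1$ lie in the other'' is false, and this is where your argument breaks. In the model doubling polynomial (rays $1/3,2/3$ to $\alpha$), rays accumulating on $J^1$ have arguments in $[1/3,5/12]\cup[7/12,2/3]$ and rays accumulating on $J^0$ have arguments in $[1/6,1/3]\cup[2/3,5/6]$; each of these sets meets \emph{both} windows $(0,1/2)$ and $(1/2,1)$. What is true is only the one-sided statement: $F:J^1\to J^0$ always switches windows (so $y^0$ and $y^2$ are automatically $(n{-}1)$-unfriendly), whereas $F:J^0\to J^1$ need not (so $y^1=F^2(x)$ may be friendly). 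Hence the hypothesis of Lemma \ref{p3} can fail for your $y$.

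The paper repairs this with a short case split. If $t(F(x)),t(F^2(x))$ lie in different windows, your choice $y=F^3(x)$ works. If they lie in the same window but $t(F^3(x)),t(F^4(x))$ do not, shift two steps forward and take $y=F^5(x)$ (this is why five forward iterates in $E_\epsilon$ are assumed). If both pairs $t(F(x)),t(F^2(x))$ and $t(F^3(x)),t(F^4(x))$ lie in the same window, then $F^2(x)$ and $F^4(x)$ are each $(n{-}1)$-\emph{friendly}; in this case Lemma \ref{p3} is unavailable, and one instead applies Lemma \ref{fixed} (condition (I), $i=0$, $j=1$) separately to $F^2(x)$ and $F^4(x)$ to produce the two curves, which then land at a common fixed point because the rotation number is $1/2$ and their arguments sit in opposite windows. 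Your proposal is missing this trichotomy; the ``principal obstacle'' you flag is not a technicality to be unwound but a genuine failure of the intended combinatorial claim.
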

\begin{remark}
Condition $F^k(x)\in E_\epsilon$, $0\le k\le 5$, is equivalent to the following: $x\in f^{-5p_{n-1}}(E_{\epsilon,p_{n-1},6})$.
\end{remark}
\begin{proof}
To fix the idea let's replace $f^{p_{n-1}}: f^j(J_{n-1})\to f^j(J_{n-1})$, using a conjugacy to a quadratic polynomial,
by a quadratic polynomial (denoted also by $F$) so that now $F:J\to J$ where $J=J(F)$ and $F^2$ is satellite with two small Julia sets $J^0$, $J^1$ that
meet at the $\alpha$-fixed point of $F$ and rays of arguments $1/3$, $2/3$ land at $\alpha$ . Here $0\in J^0$, $F(0)\in J^1$, $F: J^1\to J^0$ is a homeomorphism while
$F:J^0\to J^1$ is a two-to-one map. If a ray $R_t$ of $F$ has its accumulation set in $J^1$ then $t\in [1/3,5/12]\cup [7/12,2/3]$ and if $R_t$ accumulates in $J^0$ then
$t\in [1/6,1/3]\cup [2/3,5/6]$. This implies that if $R_t$ lands at $x\in J^1$ and $t$ lies in one of the two 'windows' $[0,1/2)$, $(1/2, 1]$ then
$R_{\sigma(t)}$ lands at $J^0$ where $\sigma(t)$ must be in a different 'window' (in other words, points of $J^0$ are 'unfriendly').
%$t\in [1/3,5/12]\subset [0,1/2]$ (respectively, $t\in [7/12,2/3]\subset [1/2,1]$) then
%$R_{\sigma(t))$ lands at $F(x)\in J^0$ and $\sigma(t)\in [2/3,5/6]\subset [1/2,1]$ (respectively, $\sigma(t)\in [1/6,1/3]\subset [0,1/2]$)
Coming back to $f^{p_{n-1}}$ this means that, for $x\in J^1$, $t(x),t(F(x))$ are always in different components (where by 'component' we mean a component of $s_{n-1,j}$).
Besides, for $y\in J_\infty\cap J$, $y$ and $F(y)$ are always in different $J^i$, $i=0,1$.
This leaves us with the only possibilities:

(i) $t(F(x)), t(F^2(x))$ are in different components;
this implies that $t(x), t(F(x))$ are in different components and $t(F(x)), t(F^2(x))$ are in different components,
that is, points $F^3(x), F^2(x), F(x)$ are all unfriendly;

(ii) $t(F(x)), t(F^2(x))$ are in the same components; there are two subcases:

(ii') $t(F^3(x)), t(F^4(x))$ are in different components, i.e., (i) holds with $x$ replaced by $F^2(x)$
which implies that $F^5(x), F^4(x), F^3(x)$ are all unfriendly;

(ii'') $t(F^3(x)), t(F^4(x))$ are in the same component which then means that $F^2(x)$ and $F^4(x)$ are both friendly.

In the case (i) and (ii'), apply Lemma \ref{p3} with $n-1$ instead of $n$ to $z=F^3(x)$ and to $z=F^5(x)$, respectively,
letting $\Gamma^{1/2}_n(x)=\gamma^{1/2}_{n-1}(F^3(x))$, $\tilde\Gamma^{1/2}_n(x)=\tilde\gamma^{1/2}_{n-1}(F^3(x))$ and
$\Gamma^{1/2}_n(x)=\gamma^{1/2}_{n-1}(F^5(x))$, $\tilde\Gamma^{1/2}_n(x)=\tilde\gamma^{1/2}_{n-1}(F^5(x))$, respectively.
In the case (ii''), apply Lemma \ref{fixed} with $p_{n-1}$, $q=1$, $i=0,j=0$, first, to the point $F^2(x)$ and then to the point $F^4(x)$
letting $\Gamma^{1/2}_n(x)=\gamma_{p_{n-1},1,0,1}(F^2(x))$, $\tilde\Gamma^{1/2}_n(x)=\gamma_{p_{n-1},1,0,1}(F^4(x))$.
\end{proof}

\section{Proof of Theorem~\ref{thm:meas+}}\label{thm:proof}
Every invariant probability measure with positive Lyapunov exponent has an ergodic component with positive exponent.
So let $\mu$ be such an ergodic $f$-invariant measure component supported in $J_\infty$.
First, we have the following general
\begin{remark}\label{genrem} Given $x\in J'_\infty$ such that $\tilde r(x)>0$ as in Proposition \ref{lyplus}, and given $n$,
%and $0<j_n(x)<p_n$ so that $K_x\subset f^{j_n(x)}(J_n)$,
the set $J_{n,x}=f^{j_n(x)}(J_n)$ cannot be covered by $B(x,\tilde r(x))$ because
otherwise the branch $g_{p_n}: B(x,\tilde r(x))\to {\bf C}$
of $f^{-p_n}$, which sends $x$ to $x_{-p_n}\in J_{n,x}$ meets the critical value along the way so cannot be well-defined.
Thus $\diam J_{n,x}>\tilde r(x)$, for each $n$, and $\diam K_x=\lim \diam J_{n,x}\ge \tilde r(x)$.
In particular, $\diam J_{n,x}\ge r(\epsilon)$ for all $x\in E_\epsilon$ and $n$.
\end{remark}
We need to prove that $f$ has finitely many satellite renormalizations.
Assuming the contrary, let $\mathcal{S}$ be an infinite subsequence such that $f^{p_n}$ is a satellite renormalization of
$f$ for each $n\in\mathcal{S}$.

%\marginpar{The rest is rewritten NN}

We arrive at a contradiction by considering, roughly speaking, two alternative situations. In the first one, we find a point $x\in E_\epsilon$, $n$, and two curves in $B\cap A(\infty)$
where $B:=B(x,\tilde r(x))$
that tend to the $\beta$-fixed points of $J_{n,x}$ such that another ends of the curves can be joined by an arc of equipotential in $B$ thus 'surrounding'
$J_{n,x}$ by a 'triangle' in $B$ which would be a contradiction as in Remark \ref{genrem}. The second situation is when the first one does not happen. Then we use several curves to 'surround'
$J_{n,x}$ by a 'quadrilateral' in $B$, ending by the same conclusion. The curves we use have been constructed in Lemmas \ref{p1}, \ref{doublem}.

The first situation happens in cases A and B1, and the second one in B2.

{\bf Case A}: $\mathcal{S}$ {\it contains an infinite sequence of indices of non-doubling renormalizations}.
Passing to a subsequence one can assume that $f^{p_n}$ is satellite not doubling for every $n\in\mathcal{S}$.

Fix $\zeta=1/4$. By Lemma \ref{notdoubl}, for each $n\in\mathcal{S}$ and each $j=1,\cdots,[\zeta p_n]$, the map
$\sigma^{j-1}:S_{n,1}\to S_{n,j}$ is a homeomorphism and the length $|S_{n,j}|\to 0$ as $n\to\infty$ uniformly in $j$.
Fix $N$ such that $|S_{n,j}|<\delta$ for each $n>N$, $n\in\mathcal{S}$.
For $n\in\mathcal{S}$, let
$$\mathcal{C}_n=\{f^j(J_n)| 1\le j\le [\zeta p_n]\}.$$
Let $n,m\in\mathcal{S}$, $m<n$. Denote $p=p_m, \tilde p =p_n$, $q=p_n/p_m$. The intersection
$\mathcal{C}_n\cap \mathcal{C}_m$ contains all $f^{j+kp}(J_n)$ with $1\le j\le [\zeta p]$, $j+kp\le [\zeta \tilde p]$. Hence,
$$\#(\mathcal{C}_n\cap \mathcal{C}_m)\ge \sum_{j=1}^{[\zeta p]} [\zeta q - \frac{j}{p}]\ge [\zeta q-1][\zeta p]\ge$$
$$\tilde p (\frac{\zeta p-1}{p} \frac{\zeta q-1}{q}-\frac{\zeta}{q})\sim \zeta^2 \tilde p$$
as $p,q\to\infty$. Therefore, fixing $\kappa=\zeta^2/2$=1/8, there are $m_0,k_0$ such that for each $n,m\in\mathcal{S}$, $m>m_0$, $n>m+k_0$,
$$\mu(\mathcal{C}_n\cap \mathcal{C}_m)>\kappa.$$
Fix such $n,m$, assume also that $m>\max\{N, n_*\}$ where $n_*$ is defined in Lemma \ref{p1} and recall the set
$$X_n=E_{\epsilon,p_n,4}\cap E_{\epsilon, p_{n-1},Q}=\cap_{i=0}^3 f^{i p_n}(E_\epsilon)\cap_{k=0}^{Q-1} f^{k p_{n-1}}(E_\epsilon).$$
Since $\mu(X_n)>1-(Q+4)\epsilon>1-\kappa$, there is $x\in X_n\cap\mathcal{C}_n\cap \mathcal{C}_m$ and, by the choice of $n$, the assumption (II) of Lemma \ref{p1} holds for $x$.
Therefore, there exist two simple semi-open curves $\gamma(x)$ and $\tilde\gamma(x)$ that satisfy the following properties:
$\gamma(x)$ and $\tilde\gamma(x)$ tend to $\beta_{n,j_n(x)}$,
$\gamma(x), \tilde\gamma(x)\subset B(x, \rho)\cap A(\infty)$ and
$\gamma(x), \tilde\gamma(x)$ consist of arcs of equipotentials and external rays;
the starting point $b_1$ of $\gamma(x)$
and the starting point $\tilde b_1$ of $\tilde\gamma(x)$ have equal levels which is
at least $\tilde C/4$;
%\item one of the two curves (say, $\gamma(x)$) is homotopic, through curves in $A(\infty)$ tending to $\beta_{n,j_n(x)}$,
%to the ray $R_{t_{n,j_n(x)}}=f^{j_n(x)-1}(R_{t_n})$, and another one - to the ray $R_{\tilde t_{n,j_n(x)}}=f^{j_n(x)-1}(R_{\tilde t_n})$;
%???????ne nado???????? furthermore, both curves are invariant under the branch $g_{p_n}$ of $f^{-p_n}$ that fixes $\beta_{n,j_n(x)}$,???
$\gamma(x)$, $\tilde\gamma(x)\subset U_{n-1, j_{n-1}(x)}$; finally,
%arguments of points of $\gamma(x)$ lie in the interval $[t_{x^1},t_{n,j_n(x)})$ and arguments of points of $\tilde\gamma(x)$ - in $[t_{\tilde{x}},\tilde t_{n,j_n(x)})$,??????????????????
%where the lengths of these two intervals tend to zero as $n\to\infty$ uniformly in $j$ and $x$,
being completed by their common limit point $\beta_{n,j_n(x)}$ and arcs of rays from $b_1\in\gamma(x)$ to $\infty$ and from $\tilde b_1\in\tilde\gamma(x)$ to $\infty$,
they split the plane into two domains such that one of them contains $I:=J_{n,x}\setminus \beta_{n,j_n(x)}$ and another one contains all other iterates
$f^{kp_{n-1}}(I)$, $1\le k\le q_n-1$.
Now, since $U_{n-1, j_{n-1}(x)}\subset U_{m, j_{m}(x)}$ and by the choice of $m$, the distance between arguments of the points $b_1$ and $\tilde b_1$
inside of $S_{n-1,j_{n-1}(x)}$ is less than $\delta$. By the definition of $\delta$,
$b_1$ and $\tilde b_1$ can be joined by an arc $A_n$ of equipotential inside of $B(x,\rho)\cap U_{n-1, j_{n-1}(x)}$.
Consider a Jordan domain $Z_n$ with the boundary to be the arc $A_n$ and semi-open curves $\gamma(x)$, $\tilde\gamma(x)$ completed by their
common limit point $\beta_{n,j_n(x)}$. Then $Z_n\subset B(x,\rho)$.
By the properties of the curves , $Z_n\cup \beta_{n,j_n(x)}$ contains either $J_{n,x}$ or its iterate $f^{kp_{n-1}}(J_{n,x})$, for some $1\le k\le q_n-1$, in a contradiction with Remark \ref{genrem}.

Complementary to A is

{\bf Case B}: {\it for all big $n$, every satellite renormalization $f^{p_n}$ is doubling}, i.e., $\beta_n=\alpha_{n-1}$ and $p_n=2p_{n-1}$ for every $n\in\mathcal{S}$.

Let $Y_{n-1}=E_{\epsilon,p_{n-1},6}$ and $\tilde Y_{n-1}=f^{-5p_{n-1}}(Y_{n-1})$.
Note that $\mu(Y_{n-1})=\mu(\tilde Y_{n-1})>1-6\epsilon$.

For every $n\in\mathcal{S}$, let
$$L_{n}=\{0<j<p_{n-1}| \mu(f^j(J_{n-1})\cap \tilde Y_{n-1})>\frac{1-2^{12}\epsilon}{p_{n-1}}\}.$$
As $\mu(\tilde Y_{n-1})>1-6\epsilon$, it follows,
%for each $n\in\mathcal{S}$,
$$\# L_{n}>(1-3/2^{11})p_{n-1}.$$
Since we are in case B, each $f^j(J_{n-1})$ contains precisely two small Julia sets $f^j(J_n),f^{j+p_{n-1}}(J_n)$ of the next level $n$ each of them of measure
$1/(2p_{n-1})$. Hence, the measure of intersection of each of these small Julia sets with $\tilde Y_{n-1}$
is bigger than $(1/2-2^{10}\epsilon)/p_{n-1}>0$.
By Lemma \ref{doublem}, choosing for every $j\in L_n$ a point $x_j\in f^j(J_{n-1})\cap \tilde Y_{n-1}$ we get a pair of curves
$\Gamma^{1/2}_{n}(x_j),\tilde\Gamma^{1/2}_n(x_j)$ consisting of arcs of rays and equipotentials
as follows:
(i) $\Gamma^{1/2}_{n}(x_j), \tilde\Gamma^{1/2}_{n}(x_j)\subset B(x_j,3/2\rho)$, moreover, arguments of points of $\Gamma^{1/2}_{n}(x_j)$ lie in one 'window' of $s_{n-1,j}$ while
arguments of points of $\tilde\Gamma^{1/2}_{n}(x_j)$ lie in another 'window' of $s_{n-1,j}$,
(ii) $\Gamma^{1/2}_{n}(x_j)$ and $\tilde\Gamma^{1/2}_{n}(x_j)$ converge to a common point $\beta^*_{n-1,j}$ which is a fixed point of $f^{p_{n-1}}: f^j(J_{n-1})\to f^j(J_{n-1})$
(i.e., $\beta^*_{n-1,j}$ is either the non-separating fixed point $\beta_{n-1,j}$ or the separating fixed point $\alpha_{n-1,j}$,
%(iii) arguments of the points of $\Gamma_n(x)$ lie in one window of $s_{n,j}$ while arguments of the points of $\tilde\Gamma_n(x)$ lie in another window of $s_{n,j}$,
(iii) start points of $\Gamma^{1/2}_{n}(x_j), \tilde\Gamma^{1/2}_{n}(x_j)$
have equal Green level which is bigger than $\tilde C_3$,
(iv) $x_{j}-\beta^*_{n-1,j}\to 0$ as $n\to\infty$ uniformly in $j$ and $x_j$.
We add one more property as follows. Let
$$\Gamma_{n,j}=\Gamma^{1/2}_{n}(x_j)\cup \beta^*_{n-1,j}\cup\tilde\Gamma^{1/2}_{n}(x_j).$$
Then: (v) $\Gamma_{n,j}$ is a simple curve; the level of $z\in \Gamma_{n,j}\setminus\{\beta^*_{n-1,j}\}$ is positive and decreases (not strickly) from
$\tilde C_3$ to zero along $\Gamma^{1/2}_{n}(x_j)$ and then increases from zero to $\tilde C_3$ along $\tilde\Gamma^{1/2}_{n}(x_j)$;
moreover, if $j_1,j_2\in L_n$, $j_1\neq j_2$, then
$\Gamma_{n,1}$, $\Gamma_{n,j_2}$ are either disjoint or meet at the unique common point
$\beta_{n-1,j_1}=\beta_{n-1,j_2}$ and then disjoint with all others $\gamma_{n-1,j}$, $j\neq j_1,j_2$.
This is because, by property (i), $\Gamma_{n,j}\subset\overline{U_{n-1,j}}$ where (by (C), Sect \ref{prel}) any two $\overline{U_{n-1,j}}$, $\overline{U_{n-1,\tilde j}}$, $j\neq\tilde j$, are
either disjoint or meet at $\beta:=\beta_{n-1,j}=\beta_{n-1,\tilde j}$ in which case $f^{p_{n-1}}$ is satellite. In the considered case, any satellite is doubling
so $\beta\neq\beta_{n-1,i}$ for all $i$ different from $j,\tilde j$.

We assign, for the use below, a 'small' Julia set $I_{n,j}$ to each $\Gamma_{n,j}$ as follows: by the construction,
$\beta^*_{n-1,j}$ is either the $\beta$-fixed point of $f^j(J_{n-1})$, or the $\alpha$-fixed point of $f^j(J_{n-1})$.
In the former case, let $I_{n,j}=f^j(J_{n-1})$, and in the latter case, $I_{n,j}=f^j(J_n)$ (one of the two small Julai sets of the next level $n$ that
are contained in $f^j(J_{n-1})$. Observe that $I_{n,j}\cap\Gamma_{n-1,j}=\{\beta^*_{n-1,j}\}$ and is disjoint
with any other $\Gamma_{n, j'}$ provided $\Gamma_{n,j}$, $\Gamma_{n,j'}$ are disjoint.

There are two subcases B1-B2 to distinguish depending on whether arguments of end points of $\Gamma_{m,j}$ become close or not.
If yes, then one can join the end points of some $\Gamma_{n,j}$ by an arc of equipotential inside of $B(x_j,2\rho)\supset\Gamma_{m,j}$ to surround
a small Julia set as in case A, which would lead to a contradiction. If no, the construction is more subtle: we build a domain ('quadrilateral') in $B(x_j,2\rho)$ bounded by two disjoint
curves as above completed by two arcs of equipotential that join ends of different curve, so that the obtained quadrilateral again contains a small Julia set.

{\bf B1}: $\liminf_{n\in\mathcal{S},j\in L_n}|S_{n-1,j}|<\delta$.

By property (i) listed above and the definition of $\delta$, there are a sequence $(n_k)\subset\mathcal{S}$, $j_k\in L_{n_k}$
and $x_{j_k}$ as above,
such that two ends of each curve $\Gamma_{n_k,j_k}$ can be joined
inside of $B(x_{j_k},\rho)$ by an arc $A^k$ of equipotential of fixed level $\tilde C_3$ such that all arguments of points in $A^k$ belong to $S_{n_k-1,j_k}$. Then we arrive at a contradiction as in case A.

\begin{figure}[h]
\includegraphics[width=12cm]{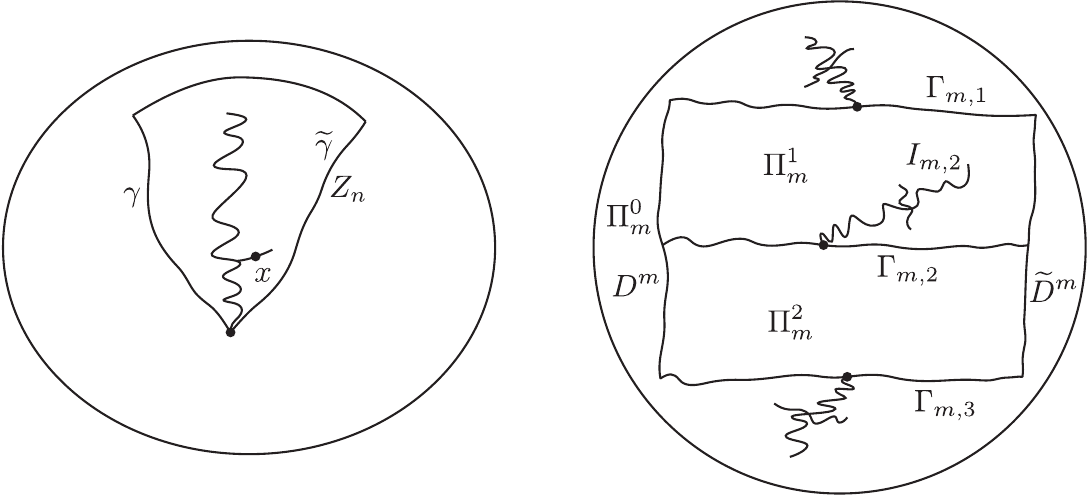}
\caption{Left: Case A and Case B1, right: Case B2.}
\label{fig1}
\end{figure}

%each pair of curves $\Gamma^{1/2}_{n_k}(x_{j_k}), \tilde\Gamma^{1/2}_{n_k}(x_{j_k})$ can be joined
%inside of $B(x_{j_k},\rho)$ by an arc $A^k$ of equipotential of fixed level $\tilde C_3$ such that all arguments of points in $A^k$ belong to $S_{n_k-1,j_k}$. Then we arrive at a contradiction as in case A.

{\bf B2}: $|S_{n-1,j}|\ge \delta$ {\it for all big $n\in\mathcal{S}$ and all $j\in L_n$}.

Fix $n,m\in\mathcal{S}$, $m-n\ge 3$. Define a subset of $L_n$ as follows:
$$L_{n}^m=\{0<j<p_{n-1}| \mu(f^j(J_{n-1})\cap (\tilde Y_{n-1}\cap\tilde Y_{m-1}))>\frac{1-2^{12}\epsilon}{p_{n-1}}\}.$$
As $\mu(\tilde Y_{n-1}\cap\tilde Y_{m-1})>1-12\epsilon$,
$$\# L_{n}^m>(1-3/2^{10})p_{n-1}.$$
For each $j\in L_n^m$ we define further
$$L_{n,j}^m=\{0<k<p_{n-1}| f^k(J_{m-1})\subset $$
$$f^j(J_{n-1}) , \mu(f^k(J_{m-1})\cap(\tilde Y_{n-1}\cap\tilde Y_{m-1}))>\frac{1-2^{16}\epsilon}{p_{m-1}}\}.$$

%Then
%$$\# L_{n,j}^m\ge 2$$
%as otherwise $\# L_{n,j}^m=1$ and,
% therefore,
%$(1-2^{12}\epsilon)/p_{n-1}<1/p_{m-1}+(p_{m-1}/p_{n-1}-1)(1-2^{16}\epsilon)/p_{m-1}=2^{16}\epsilon/p_{m-1}+(1-2^{16}\epsilon)/p_{n-1}$,
%i.e., $p_{m-1}/p_{n-1}<2^{16}\epsilon/(2^{16}\epsilon-2^{12}\epsilon)=1/(1-2^{-4})<2$, a contradiction.

Then
$$\# L_{n,j}^m\ge 5$$
as otherwise $\# L_{n,j}^m\le 4$ and,
 therefore,
$(1-2^{12}\epsilon)/p_{n-1}<4/p_{m-1}+(p_{m-1}/p_{n-1}-4)(1-2^{16}\epsilon)/p_{m-1}=2^{18}\epsilon/p_{m-1}+(1-2^{16}\epsilon)/p_{n-1}$,
i.e., $p_{m-1}/p_{n-1}<2^{18}\epsilon/(2^{16}\epsilon-2^{12}\epsilon)=4/(1-2^{-4})<8$, a contradiction because
$p_{m-1}/p_{n-1}\ge 2^{m-n}\ge 2^3$.

Fix $j\in L_n^m$. Thus $L_{n,j}^m$ contains $5$ pairwise different indices $k_i$, $1\le k\le 5$. As $L_{n,j}^m\subset L_m$, we find
$5$ curves $\Gamma_{m-1, k_i}$. By property (v), if two of them meet, they are disjoint with all others.
Therefore, there are at least $3$ of them denoted by $\Gamma_{m-1,r_i}$, $i=1,2,3$, which are pairwise disjoint.
Let $w_i,\tilde w_{m,i}$ be two ends of $\Gamma_{m-1,r_i}$.

For each $i=1,2,3$, arguments of points of $w_{m,i},\tilde w_{m,i}$ lie in different 'windows' of $s_{m-1,r_i}$.
On the other hand, by the choice of $j$, $s_{m-1,r_i}\subset s_{n-1,j}\subset S_{n-1,j}$. As $n$ is big enough,
lengths of 'windows' of $s_{n-1,j}$ are less than $\delta$. But since we are in case B2, the length of $S_{n-1,j}$ is bigger than $\delta$.
One can assume, therefore, that, for $i=1,2,3$, arguments of $w_{m,i}$ lie in one window of $s_{n-1,j}$ while arguments of $\tilde w_{m,i}$ are in another window.
Therefore, differences of arguments of all $w_{m,i}$ tend to zero as $m\to\infty$, and the same for $\tilde w_{m,i}$.
As all $w_{m,i},\tilde w_{m,i}\in E_\epsilon$, this implies by Lemma \ref{p0} that $\max_{1\le i,l\le 3}|w_{m,i}-w_{m,l}|\to 0$. This along with property (iv) implies that
$\gamma_{m-1,r_i}\subset B(w_{m,1}, 2\rho)$, $i=1,2,3$, for all big $m$. Since, for big $m$, differences of arguments of all $w_{m,i}$ are less than $\delta$,
and the same for $\tilde w_{m,i}$, one can joint all $w_{m,i}$ by an arc $D^m$ of equipotential of level $\tilde C_3$ and all $\tilde w_{m,i}$ by an arc $\tilde D^m$ of equipotential of the same level $\tilde C_3$
such that $D^m,\tilde D^m\subset B(w_1,2\rho)$. Let the end points of $D^m$ be, say, $w_{m,1}$ and $w_{m,3}$, so that $w_{m,2}\in D^m$ is in between.
Since all $3$ curves $\Gamma_{m-1,r_i}$, $i=1,2,3$, are pairwise disjoint, the end points of $\tilde D^m$ have to be then $\tilde w_{m,1}$ and $\tilde w_{m,3}$, so that $\tilde w_{m,2}\in \tilde D^m$ is in between.
Therefore, we get a 'big' quadrilateral $\Pi_m^0\subset B(w_{m,1}, 2\rho)$ bounded by $D^m,\tilde D^m, \Gamma_{m,1}, \Gamma_{m,3}$ where $\Gamma_{m,i}:=\Gamma_{m-1,r_i}$, $i=1,2,3$. The curve $\Gamma_{m,2}$ splits $\Pi_m^0$ into two
'small' quadrilaterals $\Pi_m^1,\Pi_m^2$ with a common curve $\Gamma_{m,2}$ in their boundaries. Recall now that the curve $\Gamma_{m,2}$ comes with a small Julia set $I_{m,2}$ of level either
$m-1$ or $m$, such that $I_{m,2}\cap \Gamma_{m,2}$ is a single point while $I_{m,2}$ is disjoint with $\Gamma_{m,1}$, $\Gamma_{m,3}$. Therefore,
$I_{m,2}\subset\Pi_m^0\subset B(w_{m,1}, 2\rho)$, a contradiction with Remark \ref{genrem}.

\section{Proof of Corollaries \ref{coro:J}-\ref{expat0}}\label{proofJ}
Corollary \ref{coro:J} follows directly from the following
\begin{prop}\label{pr}
Let $f$ be an infinitely renormalizable quadratic polynomial. Then conditions (1)-(4) are equivalent:
\begin{enumerate}
\item $f: J_\infty\to J_\infty$ has no invariant probability measure with positive exponent,
\item for every neighborhood $W$ of $P$ and every $\alpha\in (0,1)$ there exist $m_0$ and $n_0$ such that, for each
$m\ge m_0$ and $x\in orb(J_n)$ with $n\ge n_0$,
$$\frac{\#\{i| 0\le i<m, f^i(x)\in W\}}{m}>\alpha;$$
additionally, $f: P\to P$ has no invariant probability measure with positive exponent,
%any invariant probability measure of $f: P\to P$ has a non-positive exponent,
\item every invariant probability measure of $f: J_\infty\to J_\infty$ is, in fact, supported on $P$ and has zero exponent,
\item for every invariant probability ergodic measure $\mu$ of $f$ on the Julia set $J$ of $f$, either
$\supp(\mu)\cap J_\infty=\emptyset$ and its Lyapunov exponent $\chi(\mu)>0$,
or $\supp(\mu)\subset P$ and $\chi(\mu)=0$.
\end{enumerate}
\end{prop}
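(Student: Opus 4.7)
My plan is to prove the four equivalences via the chain (3) $\Rightarrow$ (2) $\Rightarrow$ (3), (3) $\Rightarrow$ (4) $\Rightarrow$ (1), and (1) $\Rightarrow$ (3); the implication (3) $\Rightarrow$ (1) is immediate.

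For (3) $\Rightarrow$ (2) I argue by contradiction via empirical measures. If the first clause of (2) fails, pick $W\supset P$, $\alpha\in(0,1)$ and sequences $m_k,n_k\to\infty$, $x_k\in orb(J_{n_k})$ violating the frequency bound; the empirical measures $\nu_k=m_k^{-1}\sum_{i=0}^{m_k-1}\delta_{f^i(x_k)}$ are supported on the forward-invariant sets $orb(J_{n_k})$, so any weak-$\ast$ limit $\nu$ is $f$-invariant, supported on $\bigcap_n orb(J_n)=J_\infty$, and satisfies $\nu(W^c)\ge 1-\alpha>0$ by upper semicontinuity on closed sets, contradicting (3), which forces $\supp\nu\subset P\subset W$. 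The second clause of (2) is immediate since $P\subset J_\infty$. The converse (2) $\Rightarrow$ (3) is Birkhoff applied at $\mu$-a.e.\ $x\in J_\infty$: the frequency lower bound from (2) gives $\mu(W)\ge\alpha$ for every neighborhood $W$ of $P$ and every $\alpha<1$, so $\mu(P)=1$, and the zero-exponent conclusion follows from the second clause of (2) via ergodic decomposition.

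For (3) $\Rightarrow$ (4), given an ergodic $\mu$ on $J$: if $\supp\mu\cap J_\infty=\emptyset$, then $\supp\mu$ is disjoint from $P\cup\{0\}\subset J_\infty$, so Ma\~{n}\'{e}'s theorem yields uniform expansion on $\supp\mu$ and $\chi(\mu)>0$; if instead $\supp\mu\cap J_\infty\ne\emptyset$, one argues via the ergodic invariance of $orb(J_n)$ and the fact that points of $J_\infty$ avoid the periodic meeting points of small Julia sets, combined with (3) applied to any invariant sub-measure carried by $J_\infty\cap\supp\mu$, that $\mu$ must in fact be supported in $P$ with $\chi(\mu)=0$. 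The implication (4) $\Rightarrow$ (1) is automatic, since a positive-exponent measure on $J_\infty$ is ergodic on $J$ with support meeting $J_\infty$, forcing the second clause of (4) and hence zero exponent, a contradiction.

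The main obstacle is (1) $\Rightarrow$ (3). The zero-exponent assertion follows from Przytycki's non-negativity \cite{Pr0} combined with (1): every ergodic $\mu$ on $J_\infty$ has $\chi(\mu)=0$. For the support statement, \cite{LPI} applied at a $\mu$-generic $x$ (for which $\omega(x)=\supp\mu$) yields $\supp\mu\cap P\ne\emptyset$; since $f(P)\subset P$ and $\mu$ is $f$-invariant, one has $f^{-1}(P)=P$ modulo $\mu$, so ergodicity gives $\mu(P)\in\{0,1\}$. To rule out $\mu(P)=0$ I would pass to an induced first-return map on the compact set $X=\supp\mu\setminus B(P,\delta)$ for small $\delta$ (where $\mu(X)>0$ when $\mu(P)=0$), invoke Ma\~{n}\'{e}'s theorem --- which applies because $X$ is disjoint from the postcritical closure --- to obtain uniform expansion of the induced map, and use the Abramov--Kac formula to produce a positive-exponent invariant measure lifting back to $J_\infty$, contradicting (1). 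The delicate point is that $X$ is only invariant under the first-return map, not under $f$; controlling how the excursions into $B(P,\delta)$ affect cumulative derivatives, and ensuring that the induced expansion does not cancel against contracting near-$P$ excursions, is the main technical difficulty.
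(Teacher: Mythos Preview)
The difficulty you flag in (1)$\Rightarrow$(3) --- that near-$P$ excursions might cancel the expansion seen on $X$ --- is real if you try to apply Ma\~n\'e's theorem as stated (it requires an $f$-forward-invariant set, which $X$ is not), but it dissolves once you use the correct lemma. The fact you are missing, which the paper invokes in its proof of (1)$\Rightarrow$(2), is: since all periodic points of $f$ are repelling, for any compact $E$ disjoint from $P$ there exist $\lambda>1$ and $k_0$ such that $|(f^k)'(y)|>\lambda$ whenever $y,f^k(y)\in E$ and $k\ge k_0$. This bounds the derivative of the \emph{entire} block of length $k$, excursions included; it follows from normality of the inverse branches of $f^k$ on balls disjoint from $P$, whose only possible limits are constants. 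With this in hand your first-return scheme works, and in fact you can skip Abramov--Kac and the appeal to \cite{LPI} altogether: if $\mu(P)<1$ then $\mu(E)>0$ for some such $E$, Birkhoff gives visits to $E$ with positive frequency $\beta$ along a $\mu$-typical orbit, and grouping those visits into blocks of length $\ge k_0$ yields $\chi(\mu)\ge(\beta/k_0)\log\lambda>0$, contradicting (1).

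The paper runs the cycle as (1)$\Rightarrow$(2)$\Rightarrow$(3)$\Rightarrow$(4)$\Rightarrow$(1). Its (1)$\Rightarrow$(2) is exactly your empirical-measure argument for (3)$\Rightarrow$(2) \emph{combined with} the normality lemma above: the violating orbits spend a fraction $\ge 1-\alpha$ of their time in $E=\C\setminus W$, the block estimate forces the empirical Lyapunov exponents to be at least $\log\tilde\lambda>0$, and the weak-$*$ limit is a positive-exponent invariant measure on $J_\infty$. The same device reappears in the paper's (3)$\Rightarrow$(4), replacing your Ma\~n\'e/first-return step. So once the normality lemma is supplied, your route and the paper's are rearrangements of the same ingredients. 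One small slip: in (4)$\Rightarrow$(1) you assert that a positive-exponent invariant measure on $J_\infty$ is ergodic; it need not be, but passing to an ergodic component with positive exponent suffices.
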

\begin{proof}
%It is based on the fact that $f$ expands the hyperbolic metric on $\C\setminus P$: for
%any compactly contained in $\C\setminus P$ open set $U$ there is $\lambda>1$ such
%that $|Df^k(y)|>\lambda$ whenever $y, f^k(y)\in U$, where $D$ the derivative
%w.r.t. the hyperbolic metric of $U$.
(1)$\Rightarrow$(2). Assume the contrary.
Let $E=\C\setminus W$. Since $W$ is a neighborhood of a compact set $P$, the Euclidean distance $\dist(E, P)>0$.
By a standard normality argument, as all periodic points of $f$ are repelling,
there are $\lambda>1$ and $k_0>0$ such that $|(f^k)'(y)|>\lambda$ whenever $y, f^k(y)\in E$
and $k\ge k_0$.
As (2) does not hold, find $\alpha\in (0,1)$, a sequence $n_k\to\infty$, points
$x_k\in orb(J_{n_k})$ and a sequence $m_k\to\infty$ such that, for each $k$,
$$\frac{\#\{i: 0\le i<m_k, f^i(x_k)\in E\}}{m_k}\ge\beta:=1-\alpha.$$
Fix a big $k$ such that $\beta m_k>3k_0$ and consider the times $0\le i_1^k<i_2^k<...i_{l_k}^k<m_k$ where $l_k/m_k\ge \beta$ such
that $f^i(x_k)\in E$. Let $z_k=f^{i_1^k}(x_k)$ so that $z_k\in E\cap orb(J_n)$. Therefore, by the choice of $\lambda$ and $k_0$,
$|(f^{m_k-i_1^k})'(z_k)|\ge \tilde\lambda^{m_k}\ge \tilde\lambda^{m_k-i_1}$
where $\tilde\lambda=\lambda^{\frac{\beta}{2k_0}}>1$.
%Note that $m_k-i_1^k\to\infty$.
%\lambda^l\ge \tilde\lambda^{m_k}$
%for some $\lambda>1$ that depends only on the domain $W$,
%$|Df^{m_k}(x)|\ge \lambda^l\ge \tilde\lambda^{m_k}$
%where $\tilde\lambda=\lambda^\beta>1$.
In this way we get a sequence of measures
$\mu_k=\frac{1}{m_k-i_1^k}\sum_{i=0}^{m_k-i_1^k-1}\delta_{f^i(z_k)}$
such that the Lyapunov exponent of $\mu_k$ is at
least $\log\tilde\lambda>0$. Passing to a subsequence one can assume that $\{\mu_k\}$
converges weak-* to a measure $\mu$. Then $\mu$ is an $f$-invariant probability measure on $J_\infty=\cap orb(J_n)$ with the exponent at least $\log\tilde\lambda > 0$, a contradiction
with (1).

(2)$\Rightarrow$(3), by the Birkhoff Ergodic Theorem along with \cite{Pr0}.

(3)$\Rightarrow$(4): let $\mu$ be as in (4) and $\overline{U}\cap P=\emptyset$ for some open set $U$ with $\mu(U)>0$.
Let $F:U\to U$ be the first return map equipped with the induced invariant measure $\mu_U$.
By the Birkhoff Ergodic Theorem and by an argument as in (1)$\Rightarrow$(2), the exponent $\chi_F(\mu_U)$ of $F$ w.r.t. $\mu_U$ is strictly positive.
Hence, $\chi(\mu)=\mu(U)\chi_F(\mu_U)$ is positive too. This proves the implication.

And (4) obviously implies (1).
\end{proof}

\begin{proof}[Proof of Corollary \ref{expat0}]
If $\overline\chi(x)$ were strictly positive, for some $x\in J_\infty$, that would imply, by a standard argument (see the proof of Corollary \ref{coro:J}), the existence
of an $f$-invariant measure with positive exponent supported in $\omega(x)\subset J_\infty$,
with a contradiction to Theorem \ref{thm:meas+}. This proves (\ref{expany}).
By \cite{LPS}, $\liminf_{n\to \infty} \frac{1}{n}\log|(f^n)'(c)|\ge 0$. On the other hand, by (\ref{expany}),
$\overline\chi(c)\le 0$, which proves (\ref{expcr}).
%On the other hand, if
%$\overline\chi(c)$ was strictly positive that would imply, by a standard argument, the existence
%of an $f$-invariant measure with positive exponent supported in $\omega(c)\subset J_\infty$,
%with a contradiction to Theorem \ref{thm:meas+}. This proves (\ref{expcr}).
%The proof of (\ref{expset}) is similar
%taking into account that $\overline\chi(orb(J_n))>0$ for all $n$.
%Indeed, otherwise there are sequences $p_{n_i}\to\infty$, $x_i\in orb(J_{n_i})$ and $\epsilon>0$ such that $\overline\chi_f(x_i)\ge \epsilon$
%for all $i$. As above, this implies the existence
%of an $f$-invariant measure with exponent at least $\epsilon$ supported in $\cap_i orb(J_{n_i})=J_\infty$, a contradiction.
\end{proof}

\iffalse

\begin{figure}[h]
\includegraphics[width=7cm,height=7cm]{CCF_000008.pdf}
\caption{Case A2}
\label{fig3}
\end{figure}

\fi

\end{document}